\let\OLDthebibliography\thebibliography
\renewcommand\thebibliography[1]{
  \OLDthebibliography{#1}
  \setlength{\parskip}{0pt}
  \setlength{\itemsep}{2pt plus 0.3ex}
}
\newcommand{\mc}[1]{\mathcal{#1}}
\newcommand*{\tran}{^{\mkern-1.5mu\mathsf{T}}}
\newtheorem{Assumption}{Assumption}
\newcommand{\rom}[1]{\uppercase\expandafter{\romannumeral #1\relax}}
\newcommand{\beas}{\begin{eqnarray*}}
\newcommand{\enas}{\end{eqnarray*}}
\newcommand{\bea}{\begin{eqnarray}}
\newcommand{\ena}{\end{eqnarray}}
\newcommand{\bms}{\begin{multline*}}
\newcommand{\ems}{\end{multline*}}
\newcommand{\bels}{\begin{align*}}
\newcommand{\enls}{\end{align*}}
\newcommand{\bel}{\begin{align}}
\newcommand{\enl}{\end{align}}
\newcommand{\ignore}[1]{}
\def\blfootnote{\xdef\@thefnmark{}\@footnotetext}
\newcommand{\expect}[1]{\mathbb{E}{\l[#1\r]}}
\def\mc{\mathcal}
\def\l{\left}
\def\r{\right}
\newcommand{\mb}{\mathbb}
\newcommand\argmin{\mathop{\mbox{argmin}}}
\newcommand{\mf}[1]{\mathbf{#1}}
\newcommand{\dotp}[2]{\left\langle#1,#2\right\rangle}
\newcommand\argmax{\mathop{\mbox{argmax}}}
\title{Online Primal-Dual Mirror Descent under Stochastic Constraints}
\begin{document}
\author{\name Xiaohan Wei \email xiaohanw@usc.edu \\
       \addr Department of Electrical Engineering\\
       University of Southern California\\
       Los Angeles, CA 90089, USA
       \AND
       \name Hao Yu \email hao.yu@alibaba-inc.com \\
       \addr Alibaba Group (U.S.) Inc.\\
       Bellevue, WA 98004, USA
       \AND
       \name Michael J. Neely \email mikejneely@gmail.com \\
       \addr Department of Electrical Engineering\\
       University of Southern California\\
       Los Angeles, CA 90089, USA\\
       }

\maketitle

\begin{abstract}
We consider online convex optimization with stochastic constraints where the objective functions are arbitrarily time-varying and the constraint functions are independent and identically distributed (i.i.d.) over time. Both the objective and constraint functions are revealed after the decision is made at each time slot. The best known expected regret for solving such a problem is 
$\mathcal{O}(\sqrt{T})$, with a coefficient that is polynomial in the dimension of the decision variable and relies on the \textit{Slater condition} (i.e. the existence of interior point assumption), which is restrictive and in particular precludes treating equality constraints. In this paper, we show that such Slater condition is in fact not needed. We
propose a new \textit{primal-dual mirror descent} algorithm and show that one can attain $\mathcal{O}(\sqrt{T})$ regret and constraint violation under a much weaker Lagrange multiplier assumption, allowing general equality constraints and significantly relaxing the previous Slater conditions. Along the way, for the 
case where decisions are contained in a probability simplex, we reduce the coefficient 
to have only a logarithmic dependence on the decision variable dimension. Such a dependence 
has long been known in the literature on mirror descent but seems
new in this new constrained online learning scenario.

\end{abstract}

\section{Introduction}
We consider an online convex optimization (OCO) problem with a sequence of arbitrarily varying convex objective functions $f^t(\mu),~t=0,1,2,\cdots,~\mu\in\Delta \subseteq \mathbb{R}^d$ which are revealed per slot after the decision is made, and $\Delta$ is a closed bounded convex set. For a fixed time horizon $T$, define the regret of a sequence of decisions 
$\l\{\mu^0,\mu^1,~\cdots,~\mu^{T-1}\r\}\subseteq\Delta$ as
\[
\sum_{t=0}^{T-1}f^t(\mu^t) - \min_{\mu\in\Delta }\sum_{t=0}^{T-1}f^t(\mu).
\] 
The goal of OCO is to choose the decision sequence so that the regret grows sublinearly with respect to $T$. OCO is a classical problem and has been considered in a number of previous works such as \citep{Cesa-Bianchi96TNN,Gordon99COLT,Zinkevich03ICML,Hazan16FoundationTrends}. In particular, it is known that for differentiable functions $f^t(\cdot)$, the projected gradient descent algorithm achieves an $\mathcal{O}(\sqrt{T})$ regret which is also worst case optimal. When the set $\Delta $ is a probability simplex, the mirror descent algorithm further achieves an ``almost dimension free'' logarithmic dependency on the dimension $d$.

The framework considered in this paper builds upon the previous OCO model by incorporating a sequence of time varying constraint functions $g^t_i(\mu),~i=1,2,\cdots,L$, which are also revealed at each time slot $t$ after the decision is made. The goal of this constrained OCO is to choose the decision sequence $\l\{\mu^0,\mu^1,~\cdots,~\mu^{T-1}\r\}\subseteq\Delta $ so that both the regret and constraint violations grow sublinearly in $T$ (i.e. 
$\sum_{t=0}^{T-1}g_i^t(\mu_t)\leq o(T)$) with respect to the best fixed decision in hindsight solving the following convex program:
\begin{equation}\label{eq:0}
\min_{\mu\in\Delta }\sum_{t=0}^{T-1}f^t(\mu),~~s.t.~~\sum_{t=0}^{T-1}g_i^t(\mu)\leq 0,~i=1,2,\cdots,L.
\end{equation}
The constrained OCO was first considered in the work \citep{Mannor09JMLR} where the authors (somewhat surprisingly) show via a counterexample that even with only one constraint, it is not always possible to achieve the aforementioned goal if we allow both objective and constraint functions to vary arbitrarily. Such an impossibility result implies that if one wants to obtain meaningful results on constrained OCO, then more assumptions have to be posed.

The works \citep{Mahdavi12JMLR, Jenatton16ICML, titov2018mirror} consider the scenario where the constraint functions are fixed (i.e. do not depend on the time index $t$) and propose primal-dual type methods whose analyses give $\mathcal{O}(T^{\max\{\beta,1-\beta\}})$ regret and $\mathcal{O}(T^{1-\beta/2})$ constraint violation, where $\beta\in[0,1]$ is an algorithm parameter. This bound is improved in the work \citep{yu2016low} where the authors show an $\mathcal{O}(\sqrt{T})$ regret bound and finite constraint violations (i.e. $\mathcal{O}(1)$ constraint violation) via Slater condition (i.e. There exists a $\mu\in\Delta $ such that $g_i(\mu)<0,~\forall i$). A more recent work \citep{yuan2018online} shows that one can get logarithm regret and $\mathcal{O}(\sqrt{T})$ constraint violations if one assumes instead that all objective functions are strongly convex.

Constrained OCO with stochastic constraints, where $g_i^t(\mu) = g_i(\mu,\gamma^t)$ and 
$\{\gamma^t\}_{t=0}^{T-1}$ are i.i.d., is considered in the works such as \citep{yu2017online,chen2019bandit,pmlr-v97-liakopoulos19a}, where a primal-dual proximal gradient algorithm is proposed and $\mathcal{O}(\sqrt{T})$ expected regret and constraint violations are shown under the Slater condition (i.e. there exists a $\mu\in\Delta $ such that $\expect{g_i(\mu,\omega^t)}<0,~\forall i$). Without Slater condition, the best known result is again $\mathcal{O}(T^{\max\{\beta,1-\beta\}})$ regret and $\mathcal{O}(T^{1-\beta/2})$ constraint violation as is shown in \citep{yi2019distributed}. Also, to the best of our knowledge, previous bounds in constrained online learning fail to recover the ``almost dimension free'' phenomenon for the probability simplex decision set ubiquitous in unconstrained scenarios.
In this paper, we make steps towards \textit{removing the Slater condition while maintaining the worst case optimal $\mathcal{O}(\sqrt{T})$ regret, constraint violations, and sharpening the dimension dependency on decision variables.}

Slater condition is assumed in the classical analysis of optimization algorithms for constrained convex programs such as the dual subgradient algorithm \citep{nedic2009approximate} and the interior point method \citep{boyd2004convex}. A key implication of Slater condition, which is adopted in the $\mathcal O(1/\sqrt{T})$ convergence rate analysis in \citep{nedic2009approximate}, is that it implies the existence and boundedness of Lagrange multipliers. However, the reverse implication is in general untrue, as one can show that for many equality constrained convex programs,  Lagrange multipliers do exist and are bounded \citep{bertsekas1999nonlinear}. This makes ``Slater condition free'' analysis an important topic in optimization theory and motivates series of improved primal-dual type algorithms and analysis for constrained convex programs with competitive convergence rate under the existence of Lagrange multipliers assumption \citep{neely2014simple,yurtsever2015universal,deng2017parallel,yu2017simple}. 

Replacing the Slater condition with Lagrangian type assumptions in online problems is highly non-trivial and does not follow from that of constrained convex programs. A key issue is that the objective function varies arbitrarily per slot, and so the definition of Lagrange multiplier is not clear. A simple attempt is to look at in-hindsight problems such as \eqref{eq:0} and see if the Lagrange multiplier of this problem helps with the regret analysis. However, since problem \eqref{eq:0} sums the objectives across the horizon, it hardly gives any insight on the per slot dynamics for any practical algorithm considered. If we instead look at the per slot constrained problem, then, one might be able to conduct analysis and obtain per-slot multipliers, but it is not clear how to piece together the analysis for different slots.

\subsection{Contributions}
 In this paper, we consider the stochastic constrained online learning problem and propose a new primal-dual online mirror descent framework, which simultaneously weakens the assumptions and improves the dimension factors in the previously known online proximal gradient type algorithms. We introduce a new \textit{sequential existence of Lagrange multipliers} condition, which is shown to be  \textit{strictly weaker} than the Slater condition, allows for equality constraints and bridges the aforementioned dilemma between on-hindsight problem and per slot problem. 
We then show via a new analysis that under such an assumption, the proposed algorithm enjoys a matching $\mathcal{O}(\sqrt{T})$ expected regret and constraint violations.  
For the  case when decisions are contained in a probability simplex, we reduce the dimension dependency 
to have only a logarithmic factor.
Conceptually, our analysis seems to be distinctive from the previous known methods in the sense that we look at the cumulative objectives over a specifically chosen time period (of length $\sqrt T$), and consider the following static constrained program starting from any time slot $t$:
$
\min_{\mu\in\Delta }\sum_{\tau=t}^{t+\sqrt T}\expect{f^{\tau}(\mu)},~~~s.t.~~ \expect{g_i(\mu,\omega^t)}\leq 0,
~~i=1,2,\cdots,L.
$
We demonstrate that the existence and boundedness of Lagrange multipliers for this problem provides certain weak error bound conditions for the dual function sufficient to bound the size of the dual variable process, leading to the desired results. 


\subsection{Notation} 
For any vector $\mathbf{v}\in\mathbb{R}^d$, $\mathbf{v}\geq0,~\mathbf{v}=0,~\mathbf{v}\leq0$ means $\mathbf{v}$ is entrywise nonnegative, zero and nonpositive, respectively. The notation $[\mf v]_+$ denotes entrywise application of the function $\max(x,0)$. The notation $\mathbb{R}^d_+$ stands for the positive orthant of  $\mathbb{R}^d$. 
For any set $\mathcal S\subseteq\mathbb{R}^d$, let $\text{int}(\mathcal{S})$ be its interior.
The norms $\|\mathbf{v}\|_1 := \sum_{i=1}^d|v_i|$, $\|\mathbf{v}\|_2 := (\sum_{i=1}^d|v_i|^2)^{1/2}$ and $\|\mathbf{v}\|_\infty := \max_{i}|v_i|$. For any convex function $f:\mathbb{R}^d\rightarrow\mathbb{R}$, we use $\nabla f(\mathbf{v})$ to denote any one of the subgradients at $\mathbf{v}$ and use $\partial f(\mathbf{v})$ to denote the set of all subgradients at $\mathbf{v}$. For any function $g(\mathbf{v},\xi)$ which is convex on the first argument $\mathbf{v}$, $\nabla g(\mathbf{v},\xi)$ denotes the subgradient of $g$ on $\mathbf{v}$ while fixing $\xi$.  For any closed set $K\subseteq\mathbb{R}^d$ and any point $\mf x\in\mathbb{R}^d$, the distance of  $\mf x$ to $K$ is defined as 
$\text{dist}(\mf x, K):=\min_{\mf y\in K}\|\mf{x-y}\|_2$.

\section{Problem Formulation and Algorithms}

\subsection{Basic definitions}\label{sec:def}
Let $\|\cdot\|$ be a general norm in $\mathbb{R}^d$. Define the dual norm on any $x\in\mathbb{R}^d$ as $\|x\|_*:= \sup_{\|y\|\leq1}\dotp{x}{y}$.
Consider a convex set $\mathcal C\subseteq\mb R^d$ (potentially be $\mb R^d$ itself) with a non-empty interior, i.e.  $\text{int}(\mathcal C)\neq\emptyset$. Let $\omega:\mathcal{C}\rightarrow\mathbb{R}$ be a function that is continuously differentiable in the interior of 
$\mathcal C$. Let $\Delta\subseteq \mathcal C$ be a \textit{compact convex} subset containing the origin and 
$\Delta^o:=\Delta\cap\text{int}(\mathcal C)$, which is non-empty. Define the \textit{Bregman divergence} function $D:\Delta\times \Delta^o\rightarrow\mathbb{R}$ generated from $\omega(\cdot)$ as follows:
\[
D(x,y):= \omega(x) - \omega(y) - \dotp{\nabla\omega(y)}{x-y}.
\]
The following is a key property of the Bregman divergence:
\begin{lemma}[Pushback]\label{lem:strong-convex}
Let $f:\mathcal{C}\rightarrow\mb{R}$ be a convex function. Fix $\alpha>0$, $y\in\Delta^o$. Suppose $x^*\in \text{argmin}_{x\in\Delta} f(x) + \alpha D(x, y)$ and $x^*\in\Delta^o$, then, for any $z\in\Delta$,
\[
f(x^*) + \alpha D(x^*,y)\leq f(z) + \alpha D(z,y) - \alpha D(z,x^*). 
\]
\end{lemma}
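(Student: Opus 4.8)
The plan is to derive the inequality from three ingredients: the first-order (variational) optimality condition for the regularized minimization defining $x^*$, the convexity of $f$, and the elementary ``three-point'' identity satisfied by any Bregman divergence. Write $h(x) := f(x) + \alpha D(x,y)$, so that $x^*$ minimizes $h$ over $\Delta$. Because $x^* \in \Delta^o = \Delta \cap \text{int}(\mathcal{C})$, the generating function $\omega$ --- and hence the map $x \mapsto D(x,y) = \omega(x) - \omega(y) - \langle \nabla \omega(y), x - y \rangle$ --- is differentiable at $x^*$, with gradient $\nabla_x D(x^*, y) = \nabla \omega(x^*) - \nabla \omega(y)$. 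This lets me split the subdifferential of $h$ at $x^*$ cleanly as $\partial f(x^*) + \alpha(\nabla \omega(x^*) - \nabla \omega(y))$, while the same hypothesis guarantees that $D(z, x^*)$ appearing in the conclusion is well-defined.

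First I would record the three-point identity: for any $z \in \Delta$, a direct expansion of the three divergences (everything but the $\omega(z)$ term and the $\nabla\omega$ inner products cancels) gives
\[
\langle \nabla \omega(x^*) - \nabla \omega(y),\, z - x^* \rangle = D(z, y) - D(z, x^*) - D(x^*, y).
\]
Next, the optimality of $x^*$ over the convex set $\Delta$ yields, for some subgradient $\xi \in \partial f(x^*)$, the variational inequality
\[
\langle \xi + \alpha(\nabla \omega(x^*) - \nabla \omega(y)),\, z - x^* \rangle \geq 0 \qquad \forall z \in \Delta.
\]
Combining this with the convexity bound $f(z) \geq f(x^*) + \langle \xi, z - x^* \rangle$ gives $f(z) \geq f(x^*) - \alpha \langle \nabla \omega(x^*) - \nabla \omega(y),\, z - x^* \rangle$. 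Substituting the three-point identity and rearranging yields exactly $f(x^*) + \alpha D(x^*, y) \leq f(z) + \alpha D(z, y) - \alpha D(z, x^*)$.

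The main obstacle is stating the optimality condition correctly. The subtlety is that $x^*$ is only assumed to lie in $\Delta^o$, i.e.\ in the interior of $\mathcal{C}$ but possibly on the \emph{boundary} of $\Delta$; hence one cannot simply set the gradient of $h$ to zero. The correct statement is the variational inequality over $\Delta$ displayed above, equivalently $0 \in \partial f(x^*) + \alpha(\nabla \omega(x^*) - \nabla \omega(y)) + N_\Delta(x^*)$ with $N_\Delta$ the normal cone of $\Delta$ at $x^*$. The role of the interiority assumption $x^* \in \text{int}(\mathcal{C})$ is precisely to ensure $\omega$ is differentiable at $x^*$, so that the subdifferential of $h$ splits as the sum above and the Bregman term contributes a genuine gradient rather than a general subgradient; everything else is then routine algebra.
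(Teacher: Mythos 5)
Your proof is correct and follows essentially the same route as the paper's: a variational optimality condition at $x^*$ (valid despite $x^*$ possibly lying on the boundary of $\Delta$, thanks to differentiability of $\omega$ at $x^*\in\text{int}(\mathcal{C})$), combined with the Bregman three-point identity and the convexity of $f$. The only difference is one of self-containedness: you invoke the standard normal-cone condition $0\in\partial f(x^*)+\alpha(\nabla\omega(x^*)-\nabla\omega(y))+N_\Delta(x^*)$ as a known fact, whereas the paper proves the same variational inequality from scratch (its Claim 1) via the perturbation $(1-h)x^*+hz$ and a limiting argument that extracts a convergent subsequence of subgradients, obtaining a possibly $z$-dependent subgradient, which is all the final algebra needs.
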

\begin{remark}
For the case where $f$ is a linear function and $\omega$ is convex, such a pushback result can be found, for example, in \citep{nemirovski2009robust}. For results with $f$ being on domain $\mathbb{R}^d$, the proof can be found in \citep{tseng2005}. Our result generalizes previous results to arbitrary set $\Delta$. It is proved in the Supplement (Section \ref{sec:property-divergence})
\end{remark}

We say $\omega(\cdot)$ is a \textit{distance generating function} if for any 
$x\in\text{int}(\mathcal C)$, $\omega(\cdot)$
is a continuously differentiable and strongly convex with modulus $\beta$ with respect to the primal norm $\|\cdot\|$, i.e.
$
\dotp{x - y}{\nabla\omega(x) - \nabla\omega(y)}\geq \beta \|x-y\|^2,~\forall x,y\in \text{int}(\mathcal C).
$
It is easy to see if $\omega$ is a distance generating function, then, the corresponding $D(\cdot,\cdot)$ satisfies 
\begin{equation}\label{eq:strong-convexity}
D(x,y)\geq \l.\beta\|x-y\|^2\r/2,~~\forall x,y\in\text{int}(\mathcal{C}).
\end{equation}
Note that $D(x,y)$ behaves asymmetrically on $x$ and $y$ over potentially different domains, which results from the (possible) non-differentiability of the distance generating function $\omega(\cdot)$ on the boundary of $\Delta$. One such example is the KL divergence. 

\begin{enumerate}[leftmargin=*]
\item    The set 
$\Delta =  \{\mu\in\mathbb{R}^d:~\|\mu\|_1 = 1,~\mu\geq0\}$ is a probability simplex, $\mathcal C = \mathbb{R}^d_+$,
the function
$\omega(\mu)=-\sum_{i=1}^d\mu_i\log\mu_i$ is the entropy function, and for any two distributions $\mu^a\in\Delta,~\mu^b\in\Delta^o$, $D(\mu^a,\mu^b):=\sum_{i=1}^d\mu^a_i\log(\mu^a_i/\mu^b_i)$ is the well-known Kullback-Leibler (KL) divergence. Furthermore, by Pinsker's inequality, it is strongly convex with respect to $\|\cdot\|_1$ with the strongly convex modulus $\beta=1$.
The dual norm in this space is $\|\cdot\|_\infty$.
\item  The set $\Delta$ is in the Euclidean space $\mathbb R^d$, $\mathcal{C} =\mathbb R^d$ and $\omega(x) = \frac12\|x\|_2^2$, which is strongly convex with respect to $\|\cdot\|_2$, $D(x,y) = \|x-y\|_2^2$, and the dual norm is also $\|\cdot\|_2$.
\end{enumerate}

\vspace{-1em}
\subsection{Problem formulation}\label{sec:formulation}
\vspace{-0.5em}
In this section, we set up the basic formulation of stochastic constrained online optimization. 
Let $\{\xi^t\}_{t=0}^\infty$ and
$\{\gamma^t\}_{t=0}^{\infty}$ be two processes, where $\{\xi^t\}_{t=0}^\infty$ can be arbitrarily time varying (might be chosen based on the system history) 
and $\{\gamma^t\}_{t=0}^{\infty}$ are i.i.d. realizations of a random variable $\gamma$ with a possibly unknown distribution.
Let $f(\mu,\xi^t)$, $g_i(\mu, \gamma^t), i\in \{1,2,\ldots, L\}$ be deterministic functions which are convex in the first component given the second component. Furthermore, let $\{h_j^t\}_{t=0}^{\infty},~j\in\{1,2,\cdots,M\}$ be sequences of i.i.d. random vectors in $\mathbb{R}^d$. Throughout the paper, we assume $\xi^t,\gamma^t,h_j^t$ are jointly independent for all $t$ with system history up to time $t$ as 
$\mathcal{F}_t := \{\xi^\tau,\gamma^\tau,h_j^\tau\}_{\tau=0}^{t-1}$.
For any fixed $\mu\in\Delta$, we write $f^t(\mu) := f(\mu,\xi^t)$, $g_i^t(\mu) := g_i(\mu, \gamma^t)$, and $\overline f^t(\mu) = \mathbb{E}[f^t(\mu)|\mathcal{F}_t]$, $\overline g_i(\mu) = \mathbb{E}[g_i^t(\mu)]$.  We further define the vectorized notations $\mathbf{g}^t(\mu)= [g^{t}_{1}(\mu), \ldots, g^{t}_{L}(\mu)]\tran$, $\overline{\mathbf{g}}(\mu) = [\mathbb{E}[g_{1}(\mu, \omega)], \ldots, \mathbb{E}[g_{L}(\mu, \omega)]]\tran$, $\mathbf{h}^t(\mu)= [\dotp{h^{t}_{1}}{\mu}, \ldots, \dotp{h^{t}_{M}}{\mu}]\tran$ and 
 $\overline{\mathbf{h}}(\mu)= [\dotp{\expect{h^{t}_{1}}}{\mu}, \ldots, \dotp{\expect{h^{t}_{M}}}{\mu}]\tran$. It is also worth noting that our algorithms and analysis also apply to the special case where $\{\xi^t\}_{t=0}^\infty$ are also i.i.d. for which we have 
$\overline f^t(\mu) = \mathbb{E}[f^t(\mu)]$.

Define the benchmarking decision in-hindsight $\mu^*$ as a solution to the following static convex program:
\vspace{-0.5em}
\begin{equation}\label{eq:on_hindsight}
\min_{\mu\in\Delta}~~ \sum_{t=0}^{T-1}\overline f^t(\mu)~~s.t.~~\overline{\mathbf{g}}(\mu)\leq0,~~\overline{\mathbf{h}}(\mu) =\mf b,
\end{equation}
where $\mf b = [b_1,~b_2,~\cdots,~b_M]\tran$ is a vector of constants.
At the beginning of each time slot $t$, none of the objective function $f^{t}(\mu)$, constraint function 
$g_{i}^t(\mu)$ or random vector $h_j^t$ is known. The decision maker is supposed to choose a vector $\mu^t\in\Delta$ first before observing these quantities. The goal is to make sequential (possibly randomized) decisions so that both the expected regret, defined as 
$\sum_{t=0}^{T-1}\expect{f^t(\mu^t) - f^t(\mu^*)}$, 
and expected constraint violations, define as $\sum_{t=0}^{T-1}\expect{g_i^t(\mu^t)}$ and 
$\mathbb E|\sum_{t=0}^{T-1}h_j^t(\mu^t)|$, grow sublinearly with respect to the time horzon $T$.
Throughout this paper, we make the following boundedness assumption:

\begin{Assumption}[Boundedness of objectives and constraint functions] \label{as:basic}~
\begin{enumerate}[leftmargin=*]
\item Objective functions $f^t(\mu)$ and constraint functions $g_i^t(\mu)$ have bounded subgradients on $\Delta$, i.e. there exist absolute constants $D_{1}>0 $  and $D_{2}>0$ such that $\| \nabla f^{t} (\mu)\|_* \leq D_{1}$, $\sum_{i=1}^L\| \nabla g_{i}^t(\mu) \|_*^2 \leq D_{2}^2$, for all $\mu\in \Delta$, all $t\in \{0,1,\ldots\}$, and all $ i\in\{1,2,\ldots,L\}$.
\item There exist absolute constants $F,G,H>0$ such that $|f^t(\mu)|\leq F,~\forall t\in\{0,1,2,\cdots\}$, $\sum_{i=1}^L|g_i^t(\mu)|^2 \leq G^2$ for all $\mu\in \Delta,~ t\in\{0,1,2,\cdots\}$, 
and $\sum_{j=1}^M\|h_j^t\|_*^2\leq H^2$, for all $j\in\{1,2,\cdots,M\},~t\in \{0,1,\ldots\}$.
\item The Bregman divergence $D(\cdot,\cdot)$ is generated from a distance generating $\omega(\cdot)$ and bounded on the set $\Delta$, i.e. there exists a constant $R$ such that 
$\sup_{x\in\Delta,y\in\Delta^o}D(x,y)\leq R$.
\end{enumerate}
\end{Assumption}

By strong convexity of the Bregman divergence \eqref{eq:strong-convexity}, we have $\sup_{x\in\Delta,y\in\Delta^o}\|x- y\|^2\leq 2R/\beta$. 
Note further that KL divergence does not satisfy Assumption \ref{as:basic}(3), for which we will develop a separate new algorithm in Section \ref{sec:prob-simplex}.

\subsection{Primal-dual online mirror descent}
We are now in a position to introduce our new online mirror descent (Algorithm \ref{alg:new-alg}) for the stochastic constrained online learning. The algorithm computes the next decision $\mu^{t+1}$ by a proximal mirror map using $\mu^t$, $f^t$ and $g^t_i$, and control the constraint violations via dual multipliers $\mf Q(t)$ and $\mf H(t)$.
 \begin{algorithm}
\caption{}
\label{alg:new-alg}
Let $V, \alpha> 0$ be some trade-off parameters. Let $Q_i(t),~H_j(t)$ be sequences of dual multipliers such that $Q_i(0) = 0,~H_j(0) = 0,~\forall i,j$. Let $\mu^{0}=\mu^{-1} \in\Delta$. 

\textbf{For} t = 0 to $T-1$:
\begin{enumerate}[leftmargin=15pt]
\item  Choose $\mu^{t}$ as a solution to the following problem:
\vspace{-0.5em}
\begin{align}
 \min_{\mu\in \Delta} \Big\langle V\nabla f^{t-1}(\mu^{t-1}) + \sum_{i=1}^{L} Q_i(t)\nabla g_i^{t-1}(\mu^{t-1}) 
 + \sum_{j=1}^{M} H_i(t)h_i^{t-1}, \mu\Big\rangle + \alpha D(\mu,\mu^{t-1})   \label{eq:mu-update}
\end{align}
\item Update each dual multiplier $Q_i(t), H_j(t)$ via 
\begin{align}
&Q_{i}(t+1) = \max\left\{Q_{i}(t) + g^{t-1}_{i}(\mu^{t-1}) + \dotp{\nabla g_{i}^{t-1} (\mu^{t-1})}{\mu^t-\mu^{t-1}}, 0\right\},  ~i\in\{1,2,\cdots,L\}\label{eq:Q-update}\\
&H_{j}(t+1) = H_{j}(t) + \dotp{h_{j}^{t-1}}{\mu^t} - b_j,
~j\in\{1,2,\cdots,M\}\label{eq:H-update}
\end{align}
\item Observe the objective function $f^{t}$ and constraint functions $\{g_i^t\}_{i=1}^L,~\{h_j^t\}_{j=1}^M$.
\end{enumerate}
\textbf{End for}.
\end{algorithm}

\vspace{-0.5em}
\subsection{Sequential Existence of Lagrange Multipliers (SELM)}
In this section, we introduce our Lagrange multiplier condition. A detailed comparison between such a condition and other constraint qualification conditions is delayed to the Supplementary (Section \ref{sec:constraint-qualification}). We start by defining a partial average function starting from any time slot $t$ as: $\overline{f}^{t,k}: = \frac1k\sum_{i=0}^{k-1}\overline{f}^{t+i}$. Consider the following optimization problem:
\begin{equation}\label{eq:partial-static-prob}
\min_{\mu\in\Delta}\overline{f}^{t,k}(\mu)~~s.t.~~\overline{\mathbf{g}}(\mu)\leq0,~~\overline{\mathbf{h}}(\mu) = \mf b,
\end{equation}
where $\overline{\mathbf{g}}(\mu), ~\overline{\mathbf{h}}(\mu)$ are defined in Section \ref{sec:formulation}.
Denote the solution to this program as $\overline{f}^{t,k}_*$. 
Define the Lagrangian dual function of \eqref{eq:partial-static-prob} as 
\begin{equation}\label{eq:target-dual}
q^{(t,k)}(\lambda,\eta):= \min_{\mu\in\Delta} \overline{f}^{t,k}(\mu) + \sum_{i=1}^L\lambda_i\overline{g}_i(\mu) 
+ \sum_{j=1}^M\eta_j(\overline{h}_j(\mu)-b_j),
\end{equation}
where $\lambda\in\mathbb{R}^L_+$ and $\eta\in\mathbb{R}^M$ are dual variables. For simplicity of notations, we always enforce them to be row vectors. Now, we are ready to state our condition:
\begin{Assumption}[Sequential existence of Lagrange multipliers (SELM)]\label{as:selm}
For any time slot $t$ and any time period $k$, the set of primal optimal solution to \eqref{eq:partial-static-prob} is non-empty. Furthermore the set of dual optimal solution, which is 
$\mathcal{V}_{t,k}^* := \text{argmax}_{\lambda\in\mathbb{R}^L_+,~\eta\in\mathbb{R}^M}q^{(t,k)}(\lambda,\eta)$, is non-empty and bounded. Any vector in $\mathcal{V}^*$ is called a Lagrange multiplier associated with \eqref{eq:partial-static-prob}.
Furthermore, there exists an absolute constant $B>0$ such that for any $t\in\{0,1,\cdots,T-1\}$ and $k=\sqrt{T}$, the dual optimal set 
$\mathcal{V}_{t,k}^*$ defined above satisfies $\max_{[\lambda,\mu]\in\mathcal{V}_{t,k}^*}\|[\lambda,\mu]\|_2\leq B$.
\end{Assumption}

\begin{remark}
Note first that SELM reduces to the known existence and boundedness of Lagrange multipliers assumption adopted in optimization theory when the objectives are also i.i.d. functions. 
In Section \ref{sec:constraint-qualification} of the Supplement, we show that SELM is equivalent to certain constraint qualification conditions and strictly weaker than the Slater conditions. In particular, we obtain the following simplifications in special cases: (1) Lemma \ref{lem:slater-selm} shows that Slater condition implies SELM. (2) Corollary \ref{cor:mfcq-2} implies that
when the interior of $\Delta$ is non-empty and there are only equality constraints, the linear independence of $\{\expect{h^t_1},~\expect{h^t_2},~\cdots,~\expect{h^t_M}\}$ is equivalent to SELM. (3) Lemma \ref{cor:mfcq} implies that when $\Delta$ is a probability simplex there are only equality constraints, the linear independence of $\{\mathbf{1},~\expect{h^t_1},~\expect{h^t_2},~\cdots,~\expect{h^t_M}\}$ is equivalent to SELM.
\end{remark}

The motivation for SELM is as follows: whenever Lagrange multipliers exist and are bounded, we automatically get that the dual function deviates according to a certain curve related to the distance from the set of Lagrange multipliers, namely, the weak error bound condition (EBC).
\begin{definition}[Weak error bound condition (EBC)]\label{def:ebc}
Let $F(\mathbf{x})$ be a concave function over $\mathbf{x}\in\mathcal{X}$, where $\mathcal{X}$ is closed and convex. 
Suppose $\Lambda^*:= \argmax_{\mathbf{x}\in\mathcal{X}}F(\mathbf{x})$ is non-empty. The function $F(\mathbf{x})$ satisfies the weak EBC if there exists constants $\ell_0,~c_0>0$ such that for any $\mf x\in\mathcal{X}$ satisfying 
$\text{dist}(\mf x,\Lambda^*)\geq\ell_0$,
\begin{equation*}
 F(\mf x^*)-F(\mf x) \geq c_{0}\cdot \text{dist}(\mf x,\Lambda^*).
\end{equation*}
\end{definition}
\vspace{-5pt}
Note that in Definition \ref{def:ebc}, $\Lambda^*$ is a closed convex set. This follows from the fact that $F(\mf x)$ is a convex function and thus all sub level sets are closed and convex. 
The following lemma shows SELM implies weak EBC:
\begin{lemma}\label{lem:leb}
Fix $T\geq 1$. Suppose Assumption \ref{as:selm} holds, then for any $t\in\{0,1,\cdots,T-1\}$ and $k=\sqrt{T}$,  there exists constants $c_0,\ell_0>0$,  such that the dual function $-q^{(t,k)}(\lambda,\eta)$ defined in \eqref{eq:target-dual} satisfies the weak EBC with parameter $c_0,\ell_0$. 
\end{lemma}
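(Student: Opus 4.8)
The plan is to verify Definition~\ref{def:ebc} directly for the concave dual function $q^{(t,k)}$, taking $\mathcal{X}=\mathbb{R}^L_+\times\mathbb{R}^M$ and $\Lambda^*=\mathcal{V}^*_{t,k}$; the entire role of Assumption~\ref{as:selm} is to guarantee that this argmax set is nonempty, closed, and \emph{bounded}, hence compact. First I would record the structural facts about $q^{(t,k)}$. For each fixed $\mu\in\Delta$ the map $(\lambda,\eta)\mapsto \overline f^{t,k}(\mu)+\sum_{i}\lambda_i\overline g_i(\mu)+\sum_{j}\eta_j(\overline h_j(\mu)-b_j)$ in \eqref{eq:target-dual} is affine, so $q^{(t,k)}$ is a pointwise infimum of affine functions and is therefore concave. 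By Assumption~\ref{as:basic} the coefficients $\overline g_i(\mu),\overline h_j(\mu)-b_j$ and the constant term $\overline f^{t,k}(\mu)$ are bounded over the compact set $\Delta$, so for each $(\lambda,\eta)$ the infimum is attained and finite; thus $q^{(t,k)}$ extends to a finite concave function on all of $\mathbb{R}^L\times\mathbb{R}^M$ and is in particular continuous on $\mathcal{X}$. Consequently $\mathcal{V}^*_{t,k}$, being a superlevel set of a continuous concave function, is closed and convex, and by SELM it satisfies $\|[\lambda,\eta]\|_2\le B$, so it is compact. Write $q^*:=\max q^{(t,k)}=q^{(t,k)}(\mathbf{x}^*)$ for $\mathbf{x}^*\in\mathcal{V}^*_{t,k}$.

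Next I would fix any $\ell_0>0$ and study the ``sphere'' $S_{\ell_0}:=\{\mathbf{x}\in\mathcal{X}:\ \text{dist}(\mathbf{x},\mathcal{V}^*_{t,k})=\ell_0\}$. Since $\mathcal{V}^*_{t,k}$ is bounded and $\mathbf{x}\mapsto\text{dist}(\mathbf{x},\mathcal{V}^*_{t,k})$ is continuous, $S_{\ell_0}$ is closed and bounded, hence compact; it is nonempty because $\mathcal{X}$ is connected and unbounded while $\mathcal{V}^*_{t,k}$ is bounded, so the distance function attains the intermediate value $\ell_0$. Every point of $S_{\ell_0}$ lies outside the argmax set, so $q^{(t,k)}<q^*$ there, and by compactness the maximum over $S_{\ell_0}$ is attained; hence $\delta:=q^*-\max_{\mathbf{x}\in S_{\ell_0}}q^{(t,k)}(\mathbf{x})>0$. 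This strict positivity of $\delta$ is exactly the point where the boundedness clause of SELM is indispensable, and I expect it to be the main (indeed the only) substantive step.

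Finally I would propagate this uniform gap on the sphere to all far-away points using one application of concavity together with the projection property of convex sets. Given any $\mathbf{x}\in\mathcal{X}$ with $r:=\text{dist}(\mathbf{x},\mathcal{V}^*_{t,k})\ge\ell_0$, let $\bar{\mathbf{x}}$ be the (unique) Euclidean projection of $\mathbf{x}$ onto the compact convex set $\mathcal{V}^*_{t,k}$, and set $\mathbf{z}:=(1-\theta)\bar{\mathbf{x}}+\theta\mathbf{x}$ with $\theta:=\ell_0/r\in(0,1]$. The variational characterization of the projection onto a convex set gives $\text{dist}(\mathbf{z},\mathcal{V}^*_{t,k})=\|\mathbf{z}-\bar{\mathbf{x}}\|_2=\ell_0$, so $\mathbf{z}\in S_{\ell_0}$ and thus $q^*-q^{(t,k)}(\mathbf{z})\ge\delta$. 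Concavity along the segment $[\bar{\mathbf{x}},\mathbf{x}]\subseteq\mathcal{X}$ yields $q^{(t,k)}(\mathbf{z})\ge(1-\theta)q^*+\theta\,q^{(t,k)}(\mathbf{x})$, i.e. $q^*-q^{(t,k)}(\mathbf{z})\le\theta\,(q^*-q^{(t,k)}(\mathbf{x}))$. Combining the two inequalities gives $q^*-q^{(t,k)}(\mathbf{x})\ge\delta/\theta=(\delta/\ell_0)\,r$, which is the weak EBC with $c_0:=\delta/\ell_0>0$ and threshold $\ell_0$.

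Beyond the compactness step I do not anticipate a genuine obstacle. The one conceptual point worth flagging is that the argument is inherently local-to-infinity: it delivers the linear lower bound only for $\text{dist}(\mathbf{x},\mathcal{V}^*_{t,k})\ge\ell_0$, which is precisely why Definition~\ref{def:ebc} is stated as a \emph{weak} EBC. Near $\mathcal{V}^*_{t,k}$ the concave dual can be flat or have a nonsmooth ridge, so no linear gap bound can hold there and $\ell_0$ cannot in general be sent to $0$; fortunately only the behavior away from the multiplier set is needed. The identification with Definition~\ref{def:ebc} is with $F=q^{(t,k)}$ (the concave function to be maximized), $\mathcal{X}=\mathbb{R}^L_+\times\mathbb{R}^M$, and $\Lambda^*=\mathcal{V}^*_{t,k}$.
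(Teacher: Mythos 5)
Your proposal is correct and takes essentially the same route as the paper's own proof (Lemma \ref{lem:leb-0} in the Supplement): both arguments fix the sphere $\{\mathbf{x}:\text{dist}(\mathbf{x},\mathcal{V}^*_{t,k})=\ell_0\}$, establish a positive optimality gap on it, and propagate that gap to all far-away points by concavity along the segment from the projection onto $\mathcal{V}^*_{t,k}$, with $\theta=\ell_0/\text{dist}(\mathbf{x},\mathcal{V}^*_{t,k})$. If anything, your write-up is slightly cleaner at the two delicate points: you justify the gap $\delta>0$ via compactness of the sphere and continuity of $q^{(t,k)}$ (Weierstrass), where the paper only invokes closedness, and you prove that the interpolated point stays on the sphere via the variational inequality $\langle \mathbf{x}-\bar{\mathbf{x}},\mathbf{y}-\bar{\mathbf{x}}\rangle\le 0$ for projections, which replaces the paper's contradiction-based Claim 1.
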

In the Supplement (Section \ref{sec:selm-ebc}), we will compare this weak EBC with the classical EBC in optimization theory and show that classical EBC implies weak EBC with explicit constants.

\vspace{-10pt}

\section{Main results}
\vspace{-5pt}
In this section, we present our main result of online primal-dual mirror descent. 
\begin{theorem}\label{main:theorem}
Let $\mu^*$ be a solution to the in-hindsight optimization problem \eqref{eq:on_hindsight}.
Suppose Assumption \ref{as:basic} and \ref{as:selm} hold. Let $\overline{c},~\overline{\ell}>0$ be absolute constants such that 
$c_0\geq\overline{c}$ and $\ell_0\leq\overline{\ell}$ for all $c_0,~\ell_0$ obtained in Lemma \ref{lem:leb} over $t=0,1,2,\cdots, T-1$ and $k=\sqrt{T}$.
If we choose $\alpha = T,V=\sqrt{T}$ in Algorithm \ref{alg:new-alg}, then the expected regret and constraint violations satisfy:
\begin{align*}
&\frac1T\sum_{t=0}^{T-1}\expect{f^t(\mu^t) - f^t(\mu^*)}\leq \frac{C_0^{\prime}}{\sqrt{T}},\\
&\mathbb{E}\Big\Vert\Big[\frac1T\sum_{t=0}^{T-1}\overline{\mathbf{g}}(\mu^t)\Big]_+\Big\Vert_2
\leq  \frac{C_1^{\prime}}{\sqrt{T}},~~~  
\mathbb{E}\Big\Vert\frac1T\sum_{t=0}^{T-1}\overline{\mf h}(\mu^t) - \mf b\Big\Vert_2\leq  \frac{C_2^{\prime}}{\sqrt{T}},  
\end{align*}
\vspace{-0.1em}
where $C_0^{\prime}, C_1^{\prime}, C_{2}^{\prime}$ are constants depending linearly on $D_1^2+D_1+ D_2^2+G^2+H^2+G+H+F$ and independent of $T$.
\end{theorem}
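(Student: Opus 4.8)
The plan is to run a Lyapunov (drift-plus-penalty) analysis on the combined dual vector $\mf Z(t):=[\mf Q(t),\mf H(t)]$ with Lyapunov function $\Phi(t):=\tfrac12\|\mf Z(t)\|_2^2$, and to reduce everything to a single master per-slot inequality. Fixing an arbitrary comparator $z\in\Delta$, I would first square the dual recursions \eqref{eq:Q-update}--\eqref{eq:H-update} and use $(\max\{a,0\})^2\le a^2$ to bound $\Phi(t+1)-\Phi(t)$ by $\dotp{\mf Q(t)}{\tilde{\mf g}^t}+\dotp{\mf H(t)}{\mf h^{t-1}(\mu^t)-\mf b}$ plus an absolute constant, where $\tilde g_i^t:=g_i^{t-1}(\mu^{t-1})+\dotp{\nabla g_i^{t-1}(\mu^{t-1})}{\mu^t-\mu^{t-1}}$; the constant is controlled by $G,H$ and the diameter $\sqrt{2R/\beta}$ from Assumption \ref{as:basic}. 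Feeding the primal optimality of \eqref{eq:mu-update} into the Pushback Lemma \ref{lem:strong-convex}, then using convexity of $f^{t-1},g_i^{t-1}$ with $\mf Q(t)\ge 0$ to convert the linearized terms into $g_i^{t-1}(z)$ and to peel off the objective, and finally bounding $-V\dotp{\nabla f^{t-1}(\mu^{t-1})}{\mu^t-\mu^{t-1}}-\alpha D(\mu^t,\mu^{t-1})\le V^2D_1^2/(2\alpha\beta)$ via strong convexity \eqref{eq:strong-convexity}, I would arrive at
\begin{align*}
\Phi(t+1)-\Phi(t) &+ V\big(f^{t-1}(\mu^{t-1})-f^{t-1}(z)\big)\le \alpha D(z,\mu^{t-1})-\alpha D(z,\mu^t)\\
&\quad +\dotp{\mf Q(t)}{\mf g^{t-1}(z)}+\dotp{\mf H(t)}{\mf h^{t-1}(z)-\mf b}+C,
\end{align*}
valid for every $z\in\Delta$, with $C$ an absolute constant plus the $V^2D_1^2/(2\alpha\beta)$ term.

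The regret bound follows quickly from this. I would set $z=\mu^*$, sum over $t$, and telescope the Bregman terms ($\le\alpha R$) and the Lyapunov drift ($\sum_t(\Phi(t+1)-\Phi(t))=\Phi(T)\ge 0$). The point is that the cross terms vanish in conditional expectation: since $\mf Q(t),\mf H(t),\mu^{t-1}$ are all $\mc F_{t-1}$-measurable while $\gamma^{t-1},h^{t-1}$ are independent of $\mc F_{t-1}$, we get $\mathbb E[\dotp{\mf Q(t)}{\mf g^{t-1}(\mu^*)}\mid\mc F_{t-1}]=\dotp{\mf Q(t)}{\overline{\mf g}(\mu^*)}\le 0$ by $\mf Q(t)\ge 0$ and feasibility $\overline{\mf g}(\mu^*)\le 0$, and likewise $\mathbb E[\dotp{\mf H(t)}{\mf h^{t-1}(\mu^*)-\mf b}\mid\mc F_{t-1}]=0$. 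Dividing by $V$ gives a total regret of order $\alpha R/V+TC/V=O(\sqrt T)$ under $\alpha=T,V=\sqrt T$, and dividing by $T$ yields the stated $O(1/\sqrt T)$.

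The heart of the argument, and the main obstacle, is the dual-iterate bound $\mathbb E\|\mf Z(t)\|_2=O(\sqrt T)$. I would prove it by a frame-based negative-drift argument over a window of length $k=\sqrt T$. Summing the master inequality over $t\in[t_0,t_0+k)$, taking $\mathbb E[\cdot\mid\mc F_{t_0}]$, and choosing $z$ to be the minimizer defining $q^{(t_0,k)}(\mf Q(t_0)/V,\mf H(t_0)/V)$, the crucial coincidence $V=k=\sqrt T$ makes the penalty coefficient and the window length align, so the expected sum of objective terms and cross terms assembles exactly into $kV\,q^{(t_0,k)}(\mf Q(t_0)/V,\mf H(t_0)/V)$, up to a $\le\alpha R$ Bregman residue, an $O(k^2)$ error from replacing $\mf Z(t)$ by $\mf Z(t_0)$ inside the window (per-slot dual increments are an absolute constant), and a conditioning step that replaces the i.i.d.\ $\mf g^{t-1},\mf h^{t-1}$ by $\overline{\mf g},\overline{\mf h}$. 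Here the jointly-independent structure of $\{\xi^t,\gamma^t,h^t\}$ is exactly what allows $z$ (which depends on the objective noise) to be treated as independent of the constraint noise. The weak EBC of Lemma \ref{lem:leb}, available uniformly in $t_0$ with constants $\overline c,\overline\ell$, then gives $q^{(t_0,k)}(\mf Z(t_0)/V)\le \overline f^{t_0,k}_*-\overline c\,\mathrm{dist}(\mf Z(t_0)/V,\mc V^*_{t_0,k})$, which after rescaling contributes a negative drift $-k\,\overline c\,\mathrm{dist}(\mf Z(t_0),V\mc V^*_{t_0,k})$. Since $\mc V^*_{t_0,k}$ lies in a ball of radius $B$ by Assumption \ref{as:selm}, all positive $O(T)$ terms are dominated once $\|\mf Z(t_0)\|\gtrsim\sqrt T$, yielding a strictly negative expected frame-drift above a threshold $\theta=O(\sqrt T)$; combined with bounded increments, a standard drift lemma delivers $\mathbb E\|\mf Z(t)\|_2=O(\sqrt T)$ uniformly.

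Finally I would convert the dual bound into the two constraint-violation bounds. Dropping the projection in \eqref{eq:Q-update} and telescoping gives the pathwise $Q_i(T)\ge[\sum_t\tilde g_i^t]_+$; writing $\sum_t g_i^{t-1}(\mu^{t-1})=\sum_t\tilde g_i^t-\sum_t\dotp{\nabla g_i^{t-1}(\mu^{t-1})}{\mu^t-\mu^{t-1}}$ and decomposing $\sum_t\overline g_i(\mu^{t-1})=\sum_t g_i^{t-1}(\mu^{t-1})-\sum_t\big(g_i^{t-1}(\mu^{t-1})-\overline g_i(\mu^{t-1})\big)$, I bound the linearization error in $\ell_2$ by $D_2\sum_t\|\mu^t-\mu^{t-1}\|$ (an $O(\sqrt T)$-in-expectation quantity, using $\|\mu^t-\mu^{t-1}\|=O(1/\sqrt T)$ from the primal update) and the martingale remainder by $O(\sqrt T)$ via orthogonality and Assumption \ref{as:basic}(2); taking expectations and the $\ell_2$ norm with $\mathbb E\|\mf Q(T)\|_2=O(\sqrt T)$ yields the $\overline{\mf g}$ bound, and the identical computation on the linear recursion \eqref{eq:H-update} yields the $\overline{\mf h}$ bound. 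I expect the dual-iterate step to be where the difficulty concentrates: the two delicate points are the exact bookkeeping that makes the windowed drift assemble into $kV\,q^{(t_0,k)}$ (where the choices $\alpha=T$, $V=k=\sqrt T$ are essential), and the conditioning/independence argument that legitimizes replacing sampled constraints by their means while keeping $z$ admissible and pushing the weak EBC uniformly across all frames.
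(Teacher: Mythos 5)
Your proposal is correct and follows essentially the same route as the paper: the same master drift-plus-penalty inequality obtained from the Pushback Lemma \ref{lem:strong-convex} and the lower bound of Lemma \ref{lem:lhs-lower-bound}, the same regret argument (comparator $\mu^*$, telescoping, conditional expectations killing the cross terms), the same $\sqrt{T}$-window negative-drift argument assembling the windowed sum into $q^{(t_0,k)}$ and invoking weak EBC plus the stochastic drift lemma to get $\mathbb{E}\|(\mathbf{Q}(t),\mathbf{H}(t))\|_2=O(\sqrt T)$, and the same conversion of the dual bound into constraint violations via telescoping \eqref{eq:Q-update} and bounding $\|\mu^t-\mu^{t-1}\|$. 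If anything, your explicit treatment of the martingale remainder $\sum_t\big(g_i^{t-1}(\mu^{t-1})-\overline g_i(\mu^{t-1})\big)$ is a point the paper's Lemma \ref{lem:supp-constraint} passes over silently when switching between sampled constraints and their means.
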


\vspace{-1em}
\subsection{Proof of regret bound}\label{sec:reg-analysis}
\vspace{-0.5em}
In this section, we present the proof of regret bound in Theorem \ref{main:theorem}. The proofs of technical lemmas are delayed to the Supplement (Section \ref{sec:pf-regret}).
We start with the following key bound of a drift-plus-penalty (DPP) expression:
\begin{lemma}\label{lem:strong-convex-queue}
Define the drift $\Delta(t):= (\|\mf Q(t+1)\|_2^2-\|\mf Q(t)\|_2^2)/2+(\|\mf H(t+1)\|_2^2-\|\mf H(t)\|_2^2)/2$.
Consider the following ``drift-plus-penalty'' (DPP) expression at time $t$:
$V\dotp{\nabla f^{t-1}(\mu^{t-1})}{\mu^t-\mu^{t-1}}+\Delta(t) + \alpha D(\mu^t, \mu^{t-1})$. 
Let $M= \frac{4RH^2}{\beta} + G^2+\frac{2RD_2^2}{\beta}$ where $\beta$ is in \eqref{eq:strong-convexity}, then, for any $\mu\in\Delta$,
\begin{multline}
V\dotp{\nabla f^{t-1}(\mu^{t-1})}{\mu^t-\mu^{t-1}}+\Delta(t) + \alpha D(\mu^t, \mu^{t-1})
 \leq  V(f^{t-1}(\mu) - f^{t-1}(\mu^{t-1})) \\
 + \sum_{i=1}^LQ_i(t)g^{t-1}_{i}(\mu) + \sum_{j=1}^{M} H_j(t)(\dotp{h_j^{t-1}}{\mu}-b_j)
 + \alpha D(\mu,\mu^{t-1}) - \alpha D(\mu, \mu^t) + M
 .  \label{eq:interim-1}
\end{multline}
\end{lemma}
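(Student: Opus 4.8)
The plan is to derive \eqref{eq:interim-1} by combining two ingredients: a Lyapunov-style bound on the queue drift $\Delta(t)$ obtained by squaring the two dual update rules, and the Pushback inequality (Lemma~\ref{lem:strong-convex}) applied to the primal mirror-descent step \eqref{eq:mu-update}. The mechanism is that all first-order terms evaluated at $\mu^t$ will reassemble into the pushback expression and cancel, while convexity of $f^{t-1}$ and $g_i^{t-1}$ converts the leftover first-order terms into the function-value differences appearing on the right-hand side; everything that remains is a deterministic constant, which must be shown to be at most $M$.

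First I would bound the drift. Writing $a_i := g_i^{t-1}(\mu^{t-1}) + \dotp{\nabla g_i^{t-1}(\mu^{t-1})}{\mu^t-\mu^{t-1}}$ and $c_j := \dotp{h_j^{t-1}}{\mu^t}-b_j$, the updates \eqref{eq:Q-update}--\eqref{eq:H-update} read $Q_i(t+1)=\max\{Q_i(t)+a_i,0\}$ and $H_j(t+1)=H_j(t)+c_j$. Using $\max\{x,0\}^2\le x^2$ for the $Q$-queues and the exact square identity for the $H$-queues, then summing over $i$ and $j$, gives
\[
\Delta(t) \le \sum_{i=1}^L Q_i(t)\,a_i + \sum_{j=1}^M H_j(t)\,c_j + \tfrac12\sum_i a_i^2 + \tfrac12\sum_j c_j^2 .
\]
Next I would invoke Pushback. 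Since $\mu^t$ minimizes $\dotp{\mf d^{t-1}}{\mu}+\alpha D(\mu,\mu^{t-1})$ over $\Delta$ with $\mf d^{t-1} = V\nabla f^{t-1}(\mu^{t-1})+\sum_i Q_i(t)\nabla g_i^{t-1}(\mu^{t-1})+\sum_j H_j(t) h_j^{t-1}$, Lemma~\ref{lem:strong-convex} applied with the linear objective $f(\cdot)=\dotp{\mf d^{t-1}}{\cdot}$ and $z=\mu$ yields
\[
\dotp{\mf d^{t-1}}{\mu^t-\mu} + \alpha D(\mu^t,\mu^{t-1}) \le \alpha D(\mu,\mu^{t-1}) - \alpha D(\mu,\mu^t).
\]

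The remaining work is bookkeeping. Substituting the drift bound into the DPP expression and splitting each inner product via $\dotp{\,\cdot\,}{\mu^t-\mu^{t-1}} = \dotp{\,\cdot\,}{\mu^t-\mu} + \dotp{\,\cdot\,}{\mu-\mu^{t-1}}$, the ``$\mu^t-\mu$'' pieces recombine exactly into $\dotp{\mf d^{t-1}}{\mu^t-\mu}+\alpha D(\mu^t,\mu^{t-1})$ and are absorbed by the pushback inequality, leaving $\alpha D(\mu,\mu^{t-1})-\alpha D(\mu,\mu^t)$. The ``$\mu-\mu^{t-1}$'' pieces are handled by convexity: $V\dotp{\nabla f^{t-1}(\mu^{t-1})}{\mu-\mu^{t-1}}\le V(f^{t-1}(\mu)-f^{t-1}(\mu^{t-1}))$, and, crucially using $Q_i(t)\ge0$, $Q_i(t)\big(g_i^{t-1}(\mu^{t-1})+\dotp{\nabla g_i^{t-1}(\mu^{t-1})}{\mu-\mu^{t-1}}\big)\le Q_i(t)g_i^{t-1}(\mu)$, while the $H$-terms directly give $\sum_j H_j(t)(\dotp{h_j^{t-1}}{\mu}-b_j)$. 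This reproduces the claimed right-hand side up to the residual $\tfrac12\sum_i a_i^2+\tfrac12\sum_j c_j^2$.

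Finally I would show this residual is at most $M$. For the $Q$-part, $(x+y)^2\le2x^2+2y^2$, Assumption~\ref{as:basic}(2), Hölder's inequality with $\sum_i\|\nabla g_i^{t-1}(\mu^{t-1})\|_*^2\le D_2^2$, and the diameter bound $\|\mu^t-\mu^{t-1}\|^2\le 2R/\beta$ give $\tfrac12\sum_i a_i^2\le G^2+2RD_2^2/\beta$. I expect the $H$-part to be the main obstacle: $c_j=\dotp{h_j^{t-1}}{\mu^t}-b_j$ involves the external constant $b_j$, so the diameter-of-increments bound does not suffice. I would control it by noting that $\Delta$ is compact and contains the origin, so $\|\mu^t\|\le\sqrt{2R/\beta}$ and hence $|\dotp{h_j^{t-1}}{\mu^t}|\le\|h_j^{t-1}\|_*\sqrt{2R/\beta}$; and by using feasibility of \eqref{eq:partial-static-prob} (guaranteed by Assumption~\ref{as:selm}) to write $b_j=\dotp{\E[h_j^{t-1}]}{\mu^{\mathrm{feas}}}$ for some $\mu^{\mathrm{feas}}\in\Delta$, giving $|b_j|\le\|\E[h_j^{t-1}]\|_*\sqrt{2R/\beta}$. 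Then $(x+y)^2\le2x^2+2y^2$, summation over $j$, and Jensen's inequality ($\sum_j\|\E[h_j^{t-1}]\|_*^2\le\sum_j\E\|h_j^{t-1}\|_*^2\le H^2$) yield $\tfrac12\sum_j c_j^2\le 4RH^2/\beta$, producing the stated $M=4RH^2/\beta+G^2+2RD_2^2/\beta$. One technical point to check throughout is that Pushback requires $\mu^t\in\Delta^o$; this is automatic in the Euclidean setting where $\Delta^o=\Delta$, and otherwise follows from the barrier behavior of the distance-generating function $\omega$.
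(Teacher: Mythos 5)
Your proposal is correct and follows essentially the same route as the paper's proof: the pushback property (Lemma~\ref{lem:strong-convex}) applied to the linear objective of the mirror step, a Lyapunov bound on $\Delta(t)$ from squaring the dual updates, convexity of $f^{t-1}$ and $g_i^{t-1}$ (with $Q_i(t)\geq 0$) to pass to function values at $\mu$, and the same trick of expressing $b_j$ through a feasible point so that $|b_j|\leq \|\E[h_j^{t-1}]\|_*\sqrt{2R/\beta}$, yielding exactly $M=\frac{4RH^2}{\beta}+G^2+\frac{2RD_2^2}{\beta}$. The only differences are cosmetic (you bound the drift before invoking pushback, and you split the inner products explicitly rather than applying pushback first), and your remark on verifying $\mu^t\in\Delta^o$ is a point the paper itself leaves implicit.
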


This lemma is proved via the property of Bregman divergence (Lemma \ref{lem:strong-convex}). Now, for the DPP expression on the left hand side, we also have the following lower bound:
\begin{lemma}\label{lem:lhs-lower-bound}
Our Algorithm \ref{alg:new-alg} ensures
\begin{equation}\label{eq:res-bound}
V\dotp{\nabla f^{t-1}(\mu^{t-1})}{\mu^t-\mu^{t-1}} + \alpha D(\mu^t, \mu^{t-1})
\geq - V^{2}D_1^2/2\alpha\beta.
\end{equation}
\end{lemma}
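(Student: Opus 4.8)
The plan is to observe that the claimed bound does not rely on the optimality of $\mu^t$ in the update \eqref{eq:mu-update} at all; it holds for \emph{any} $\mu^t\in\Delta$. The strategy is simply to lower bound the left-hand side of \eqref{eq:res-bound} by a quadratic in the single scalar $r:=\|\mu^t-\mu^{t-1}\|$ and then minimize that quadratic explicitly over $r\geq0$.

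First I would replace the Bregman term by its strong-convexity lower bound: since $\omega$ is a distance generating function, inequality \eqref{eq:strong-convexity} gives $\alpha D(\mu^t,\mu^{t-1})\geq \frac{\alpha\beta}{2}\|\mu^t-\mu^{t-1}\|^2=\frac{\alpha\beta}{2}r^2$. Next I would control the (possibly negative) linear term by the definition of the dual norm together with Cauchy--Schwarz, namely $\dotp{\nabla f^{t-1}(\mu^{t-1})}{\mu^t-\mu^{t-1}}\geq -\|\nabla f^{t-1}(\mu^{t-1})\|_*\,\|\mu^t-\mu^{t-1}\|\geq -D_1 r$, where the last step invokes the subgradient bound $\|\nabla f^{t-1}(\mu^{t-1})\|_*\leq D_1$ from Assumption \ref{as:basic}(1). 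Combining these two estimates yields
\[
V\dotp{\nabla f^{t-1}(\mu^{t-1})}{\mu^t-\mu^{t-1}} + \alpha D(\mu^t, \mu^{t-1}) \geq -VD_1 r + \frac{\alpha\beta}{2}r^2.
\]

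It then remains to minimize $\varphi(r):=\frac{\alpha\beta}{2}r^2 - VD_1 r$ over $r\geq0$. Since $\varphi$ is a convex parabola with positive leading coefficient, its unconstrained minimizer is $r^\star = VD_1/(\alpha\beta)\geq0$, and substituting back gives $\varphi(r^\star) = -V^2D_1^2/(2\alpha\beta)$, which is exactly the right-hand side of \eqref{eq:res-bound}. I do not anticipate a substantive obstacle here; the one point requiring a line of care is the applicability of \eqref{eq:strong-convexity} when $\mu^t$ lies on the boundary of $\Delta$ (the strong-convexity inequality is stated on $\text{int}(\mathcal C)$), which is handled either by continuity of $D(\cdot,\mu^{t-1})$ up to the boundary or by noting that $\omega$ remains $\beta$-strongly convex along the segment joining $\mu^{t-1}\in\Delta^o$ to $\mu^t$. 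A tempting but unnecessary detour would be to differentiate the first-order optimality condition of \eqref{eq:mu-update}; I would deliberately avoid it, since the quadratic-minimization argument above is both shorter and shows that the bound is a worst-case property of the proximal step rather than a feature of the particular minimizer (in particular, the dual-variable terms $Q_i(t)$ and $H_j(t)$ appearing in \eqref{eq:mu-update} play no role).
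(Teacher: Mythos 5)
Your proposal is correct and follows essentially the same route as the paper's own proof: lower bound the Bregman term via strong convexity \eqref{eq:strong-convexity}, bound the linear term by H\"older's inequality and $\|\nabla f^{t-1}(\mu^{t-1})\|_*\leq D_1$, and then complete the square (the paper phrases this as the inequality $ab\leq\frac{a^2+b^2}{2}$ applied with suitably scaled factors, which is the same computation as your explicit minimization of the parabola in $r$). Your added remarks---that optimality of $\mu^t$ and the dual variables play no role, and the small point of care about \eqref{eq:strong-convexity} on the boundary---are accurate but do not change the argument, which matches the paper's.
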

Substituting this bound in to \eqref{eq:interim-1}, taking $\mu=\mu^*$ which is the solution to the in-hindsight problem \eqref{eq:on_hindsight}, and taking conditional expectations from both sides, we readily get:
{\small
\begin{multline}\label{eq:roadmap-2}
-\frac{V^{2}}{2\alpha\beta}D_1^2 +  \expect{\Delta(t)|\mathcal{F}_{t-1}}\leq V\expect{f^{t-1}(\mu^*) - f^{t-1}(\mu^{t-1}) |\mathcal{F}_{t-1}} 
+
\mathbb{E}\Big[\sum_{i=1}^LQ_i(t)g^{t-1}_{i}(\mu^*)\Big|\mathcal{F}_{t-1}\Big] \\
+ \mathbb{E}\Big[\sum_{j=1}^{M} H_j(t)(\dotp{h_j^{t-1}}{\mu^*}-b_j)~\Big|\mathcal{F}_{t-1}\Big]
+ \alpha \expect{D(\mu^*,\mu^{t-1}) - D(\mu^*, \mu^t)~|\mathcal{F}_{t-1}}  + M  .
\end{multline}
}%
\vspace{-0.1em}
Note that 
{\small
$$ \mathbb{E}\Big[\sum_{j=1}^{M} H_j(t)(\dotp{h_j^{t-1}}{\mu^*}-b_j)\Big|\mathcal{F}_{t-1}\Big] = 
\sum_{j=1}^{M} H_j(t)\expect{\dotp{h_j^{t-1}}{\mu^*} - b_j} = 0,$$
$$\mathbb{E}\Big[\sum_{i=1}^LQ_i(t)g^{t-1}_{i}(\mu^*)~\Big|\mathcal{F}_{t-1}\Big] = \sum_{i=1}^LQ_i(t)\expect{g^{t-1}_{i}(\mu^*)}\leq0,$$ 
}%
where, in both inequalities, the first step follows from the fact that $h^t_j, g_i^t$ are i.i.d. and $H_j(t),Q_i(t)$ depend on $\mathcal{F}_{t-1}$, and the second step follows from 
$\mu^*$ being a solution to the in-hindsight optimization problem \eqref{eq:on_hindsight}, thus, must be feasible, i.e. 
$\expect{g^{t-1}_{i}(\mu^*)} \leq 0$, $\expect{\dotp{h_j^{t-1}}{\mu^*}}=0$.
Thus, taking the full expectation from both sides of \eqref{eq:roadmap-2} gives
\vspace{-0.5em}
\begin{multline*}
\expect{\Delta(t)} + V\expect{f^{t-1}(\mu^{t-1}) - f^{t-1}(\mu^*)} 
\leq M
+   \frac{V^{2}D_1^2}{2\alpha\beta}
+ \alpha \expect{D(\mu^*,\mu^{t-1}) - D(\mu^*, \mu^t)} .
\end{multline*}
Taking a telescoping sum on both sides from 0 to $T-1$ and dividing both sides by $TV$,
\begin{align*}
\frac1T\sum_{t=0}^{T-1}\expect{f^{t-1}(\mu^{t-1}) - f^{t-1}(\mu^*)} 
\leq 
\frac{M}{V}
+   \frac{VD_1^2}{2\alpha\beta}
+ \frac{\alpha}{VT} D(\mu^*,\mu^0),
\end{align*}
where we use the fact that since $Q_i(0)=0$ and $H_j(0) = 0$,
$\sum_{t=0}^{T-1}\Delta(t) = (\|\mf Q(T)\|_2^2 + \|\mf H(T)\|_2^2)/2\geq0$. Substituting $\alpha = T, V = \sqrt{T}$, and $D(\mu^*,\mu^0)\leq R$ yields the desired result with $C_0' = \frac{RH^2}{\beta } + G^{2} +\frac{2RD_2^2}{\beta} + \frac{D_1^2}{2\beta} +  R$.

\subsection{Proof of constraint violations}\label{sec:constraint-violation}
In this section, we present the proof of constraint violations in Theorem \ref{main:theorem}. The proofs of technical lemmas are delayed to the Supplement (Section \ref{sec:pf-constraint-1}-\ref{sec:pf-constraint-2}).
First, it is enough to bound dual multipliers via the following lemma:
\begin{lemma}\label{lem:constraint}
The updating rule \eqref{eq:Q-update} and \eqref{eq:H-update} delivers the following constraint violation bounds:
{
\begin{align*}
&\mathbb{E}\Big\Vert\Big[\frac1T\sum_{t=0}^{T-1}\overline{\mathbf{g}}(\mu^t)\Big]_+\Big\Vert_2
\leq \frac{\expect{\|\mf Q(t)\|_2}}{T} + \frac{VD_{1}D_2}{\alpha\beta} + \frac{1}{T}\sum_{t=1}^{T} \frac{D_{2}}{\alpha \beta}\l(D_2\expect{\|\mf Q(t)\|_2} + H\expect{\|\mf H(t)\|_2}\r) \\
&\mathbb{E}\Big\Vert\frac1T\sum_{t=0}^{T-1}\overline{\mf h}(\mu^t)-\mf b\Big\Vert_2
\leq \frac{\expect{\|\mf H(t)\|_2}}{T} + \frac{VD_{1}H}{\alpha\beta}+\frac{1}{T}\sum_{t=1}^{T}\frac{H}{\alpha\beta}\l(D_2\expect{\|\mf Q(t)\|_2} + H\expect{\|\mf H(t)\|_2}\r) 
\end{align*}
}%
\end{lemma}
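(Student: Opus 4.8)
The plan is to reduce both inequalities to a single \emph{primal drift} estimate together with a telescoping of the dual recursions, so that the constraint violation is controlled entirely through the dual norms $\mathbb{E}\|\mf Q(t)\|_2$ and $\mathbb{E}\|\mf H(t)\|_2$. First I would bound $\|\mu^t-\mu^{t-1}\|$. Writing $c^t := V\nabla f^{t-1}(\mu^{t-1}) + \sum_{i} Q_i(t)\nabla g_i^{t-1}(\mu^{t-1}) + \sum_{j} H_j(t) h_j^{t-1}$, the update \eqref{eq:mu-update} minimizes the linear function $\dotp{c^t}{\mu}$ plus $\alpha D(\mu,\mu^{t-1})$, so the Pushback Lemma (Lemma \ref{lem:strong-convex}) with test point $z=\mu^{t-1}$ and $D(\mu^{t-1},\mu^{t-1})=0$ gives $\dotp{c^t}{\mu^t-\mu^{t-1}} + \alpha\big(D(\mu^t,\mu^{t-1})+D(\mu^{t-1},\mu^t)\big)\leq 0$. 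Applying strong convexity \eqref{eq:strong-convexity} to both Bregman terms and then H\"older's inequality yields $\alpha\beta\|\mu^t-\mu^{t-1}\|^2\leq \|c^t\|_*\,\|\mu^t-\mu^{t-1}\|$, i.e. $\|\mu^t-\mu^{t-1}\|\leq \|c^t\|_*/(\alpha\beta)$. Finally, bounding $\|c^t\|_*$ by the triangle inequality and Cauchy--Schwarz over the coordinate sums, using $Q_i(t)\geq 0$ and Assumption \ref{as:basic} ($\|\nabla f^{t-1}\|_*\leq D_1$, $\sum_i\|\nabla g_i^{t-1}\|_*^2\leq D_2^2$, $\sum_j\|h_j^{t-1}\|_*^2\leq H^2$), gives the working estimate $\|\mu^t-\mu^{t-1}\|\leq (VD_1 + D_2\|\mf Q(t)\|_2 + H\|\mf H(t)\|_2)/(\alpha\beta)$, whose multiples by $D_2$ and by $H$ are exactly the drift terms on the two right-hand sides.

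Next I would telescope the dual updates. Discarding the outer $\max$ in \eqref{eq:Q-update} gives the entrywise lower bound $Q_i(t+1)\geq Q_i(t)+g_i^{t-1}(\mu^{t-1})+\dotp{\nabla g_i^{t-1}(\mu^{t-1})}{\mu^t-\mu^{t-1}}$, whereas \eqref{eq:H-update} is an exact identity; summing from $t=0$ to $T-1$ with $\mf Q(0)=\mf H(0)=0$ turns these into certificates $\frac1T\sum_t a_i(t)\leq Q_i(T)/T$ and $\frac1T\sum_t(\dotp{h_j^{t-1}}{\mu^t}-b_j)=H_j(T)/T$, where $a_i(t)$ is the linearized increment. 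To pass from the positive part of a vector to such scalar certificates I would use the representation $\|[\mf v]_+\|_2=\max_{\mf z\geq 0,\ \|\mf z\|_2\leq 1}\dotp{\mf z}{\mf v}$: testing the entrywise queue inequality against a nonnegative unit $\mf z$ and using $\dotp{\mf z}{\mf Q(T)}\leq\|\mf Q(T)\|_2$ produces the leading terms $\mathbb{E}\|\mf Q(T)\|_2/T$ and $\mathbb{E}\|\mf H(T)\|_2/T$. What remains is the discrepancy between the expected constraints $\overline{\mf g}(\mu^t),\overline{\mf h}(\mu^t)$ on the left and the increments actually accumulated by the algorithm. Splitting $\mu^t=\mu^{t-1}+(\mu^t-\mu^{t-1})$ isolates, on the one hand, a linearization/argument-shift piece $\dotp{\nabla g_i^{t-1}(\mu^{t-1})}{\mu^t-\mu^{t-1}}$ (respectively $\dotp{h_j^{t-1}}{\mu^t-\mu^{t-1}}$), which Cauchy--Schwarz and the drift estimate bound coordinatewise by $D_2\|\mu^t-\mu^{t-1}\|$ (respectively $H\|\mu^t-\mu^{t-1}\|$); inserting the drift bound and averaging reproduces $\frac{VD_1D_2}{\alpha\beta}+\frac1T\sum_t\frac{D_2}{\alpha\beta}\big(D_2\mathbb{E}\|\mf Q(t)\|_2+H\mathbb{E}\|\mf H(t)\|_2\big)$ and the analogous $H$-expression.

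On the other hand, the same splitting leaves the evaluation $g_i^{t-1}(\mu^{t-1})$ and $\dotp{h_j^{t-1}}{\mu^{t-1}}$ at the measurable point $\mu^{t-1}$, and I expect the passage from these \emph{realized} values to the \emph{expected} functions $\overline{g}_i,\overline{h}_j$ to be the main obstacle. Since $\mu^{t-1}$ is $\mathcal{F}_{t-1}$-measurable while $\gamma^{t-1}$ and $h_j^{t-1}$ are independent of $\mathcal{F}_{t-1}$, the residuals $g_i^{t-1}(\mu^{t-1})-\overline{g}_i(\mu^{t-1})$ and $\dotp{h_j^{t-1}-\mathbb{E}[h_j]}{\mu^{t-1}}$ are conditionally mean-zero, so they must be eliminated by taking the (conditional) expectation \emph{before} the $\ell_2$ norm is applied --- inside a norm they are only $\Theta(1)$-sized and would destroy the $\mathcal{O}(1/\sqrt T)$ rate. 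Arranging the order of expectation and norm so that these martingale-difference terms cancel, while the one-slot index shift between $\mu^{t-1}$ and $\mu^t$ contributes only an $\mathcal{O}(1/T)$ boundary correction that is absorbed into the leading $\mathbb{E}\|\mf Q(T)\|_2/T$ and $\mathbb{E}\|\mf H(T)\|_2/T$ terms, is the delicate bookkeeping at the heart of the lemma.
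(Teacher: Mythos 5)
Your first two paragraphs are, in substance, exactly the paper's proof. The drift estimate --- pushback (Lemma \ref{lem:strong-convex}) at the test point $z=\mu^{t-1}$, strong convexity \eqref{eq:strong-convexity} applied to $D(\mu^t,\mu^{t-1})+D(\mu^{t-1},\mu^t)$, H\"older, and Cauchy--Schwarz over the coordinate sums --- gives the same bound $\|\mu^t-\mu^{t-1}\|\leq (VD_1+D_2\|\mf Q(t)\|_2+H\|\mf H(t)\|_2)/(\alpha\beta)$ that the paper derives, and telescoping the relaxed recursion $Q_i(t+1)\geq Q_i(t)+g_i^{t-1}(\mu^{t-1})+\dotp{\nabla g_i^{t-1}(\mu^{t-1})}{\mu^t-\mu^{t-1}}$ together with the exact $H$-recursion, then vectorizing (your max-representation of $\|[\cdot]_+\|_2$ is equivalent to the paper's coordinatewise argument), is precisely the paper's supporting Lemma \ref{lem:supp-constraint}. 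Up to that point the proposal is correct.

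The gap is the step you defer as ``delicate bookkeeping'': the passage from realized to expected constraints. Your proposed mechanism --- make the residuals $g_i^{t-1}(\mu^{t-1})-\overline g_i(\mu^{t-1})$ and $\dotp{h_j^{t-1}-\expect{h_j^{t-1}}}{\mu^{t-1}}$ vanish by taking (conditional) expectation \emph{before} the $\ell_2$ norm --- cannot be executed for the quantity actually being bounded, because in the lemma the expectation sits \emph{outside} $\|[\cdot]_+\|_2$; Jensen's inequality only gives $\|[\expect{X}]_+\|_2\leq\expect{\|[X]_+\|_2}$, so pushing the expectation inside bounds a strictly smaller quantity, not the stated one. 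Moreover, the premise that forces you into this dead end --- that inside the norm the residuals are ``$\Theta(1)$-sized and would destroy the $\mathcal{O}(1/\sqrt T)$ rate'' --- is false: they form a martingale-difference sequence adapted to $\{\mathcal F_t\}$, so by orthogonality $\expect{\|\frac1T\sum_t Z_t\|_2}\leq \big(\frac1{T^2}\sum_t\expect{\|Z_t\|_2^2}\big)^{1/2}=\mathcal O(G/\sqrt T)$, and similarly $\mathcal O(H\sqrt{R/\beta}/\sqrt T)$ for the $h$-part once you split $\dotp{h_j^{t}-\expect{h_j^{t}}}{\mu^{t+1}}$ at $\mu^{t}$ and absorb the $\mu^{t+1}-\mu^{t}$ piece into the drift term. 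The workable completion is therefore the opposite of what you sketch: keep the realized constraints inside the norm, telescope pointwise, and accept an extra additive $\mathcal O(1/\sqrt T)$ martingale term on both right-hand sides. This proves the lemma only up to that extra term; to be fair, the paper's own proof has the same blemish --- in Lemma \ref{lem:supp-constraint} it silently replaces $g_i^t(\mu^t)$ by $\overline g_i(\mu^t)$ in the displayed bound --- and the extra term is harmless for Theorem \ref{main:theorem}, whose final bounds are $\mathcal O(1/\sqrt T)$ in any case.
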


To bound $\expect{\|\mf Q(t)\|_2}$ and $\expect{\|\mf H(t)\|_2}$, we have the following lemma:
\begin{lemma}\label{lem:dual-bound}
Define constant
{$C_{V,\alpha,t_0} := 2\big(\frac{4RH^2}{\beta} + G^{2}+\frac{2RD_2^2}{\beta}+  \frac{V^{2}}{2\alpha\beta}D_1^2 + VF \big)t_0 + 2\big(\frac32G^2 + \frac{2RD_2^2}{\beta} + \frac{8RH^2}{\beta}\big)t_0^{2}+ 2\alpha R.$} Then, for any integer $t_{0}\geq 1$, we have the $t_0$ step drift satisfies
\begin{align}\label{eq:drift-bound-1}
&\expect{\|\mf Q(t+t_0)\|_2^2 + \|\mf H(t+t_0)\|_2^2~|\mathcal F^{t-1}} - \|\mf Q(t)\|_2^2 - \|\mf H(t)\|_2^2 \nonumber \\
\leq& 2Vt_0\expect{\l.q^{(t-1,t_0)}\big(\frac{\mf Q(t)}{V},~\frac{\mf H(t)}{V}\big) ~\r|\mathcal{F}^{t-1}} + C_{V,\alpha,t_0}.
\end{align}
where the dual function $q^{(t-1,t_0)}$ is defined in \eqref{eq:target-dual}.
\end{lemma}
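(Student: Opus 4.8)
The plan is to telescope the per-slot drift-plus-penalty bound of Lemma~\ref{lem:strong-convex-queue} over the window $\tau\in\{t,t+1,\ldots,t+t_0-1\}$, replace the time-varying multipliers by their values at the start of the window, and recognize the Lagrangian of the partial-static program \eqref{eq:partial-static-prob} evaluated at $(\mf Q(t)/V,\mf H(t)/V)$ on the right-hand side. First I would combine \eqref{eq:interim-1} with the lower bound \eqref{eq:res-bound}, exactly as in the derivation leading to \eqref{eq:roadmap-2}, to obtain for every slot $\tau$ and every $\mu\in\Delta$ the per-slot inequality
\begin{multline*}
\Delta(\tau)\leq V\big(f^{\tau-1}(\mu)-f^{\tau-1}(\mu^{\tau-1})\big)+\sum_{i=1}^{L}Q_i(\tau)g_i^{\tau-1}(\mu)+\sum_{j=1}^{M}H_j(\tau)\big(\dotp{h_j^{\tau-1}}{\mu}-b_j\big)\\
+\alpha D(\mu,\mu^{\tau-1})-\alpha D(\mu,\mu^\tau)+M+\frac{V^2D_1^2}{2\alpha\beta}.
\end{multline*}
Summing over $\tau=t,\ldots,t+t_0-1$ telescopes the drift into $\sum_\tau\Delta(\tau)=\frac12(\|\mf Q(t+t_0)\|_2^2-\|\mf Q(t)\|_2^2)+\frac12(\|\mf H(t+t_0)\|_2^2-\|\mf H(t)\|_2^2)$ and telescopes the Bregman terms into $\alpha D(\mu,\mu^{t-1})-\alpha D(\mu,\mu^{t+t_0-1})\leq\alpha R$ by Assumption~\ref{as:basic}(3).

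The key structural step is to replace each $Q_i(\tau),H_j(\tau)$ inside the window by $Q_i(t),H_j(t)$. I would control the incurred error through the one-step increments of the updates \eqref{eq:Q-update} and \eqref{eq:H-update}: using $|\max(a,0)-\max(b,0)|\leq|a-b|$, the bounds in Assumption~\ref{as:basic}, and $\|\mu^\tau-\mu^{\tau-1}\|\leq\sqrt{2R/\beta}$ (from \eqref{eq:strong-convexity}), each increment $\|\mf Q(\tau+1)-\mf Q(\tau)\|_2$ and $\|\mf H(\tau+1)-\mf H(\tau)\|_2$ is bounded by an absolute constant, so $\|\mf Q(\tau)-\mf Q(t)\|_2,\|\mf H(\tau)-\mf H(t)\|_2=O(\tau-t)$. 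Since $\|\mf g^{\tau-1}(\mu)\|_2\leq G$ and $\|[\dotp{h_j^{\tau-1}}{\mu}-b_j]_j\|_2$ is bounded uniformly over $\mu\in\Delta$, Cauchy--Schwarz bounds the total replacement error by a quantity of order $t_0^2$ that is independent of $\mu$; after an AM--GM split this contributes precisely the $t_0^2$ block of $C_{V,\alpha,t_0}$.

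Next I would take the conditional expectation $\expect{\cdot\mid\mathcal F^{t-1}}$. Because $\mf Q(t),\mf H(t)$ are $\mathcal F^{t-1}$-measurable and $\{g_i^{\tau-1}\},\{h_j^{\tau-1}\}$ are i.i.d.\ and independent of $\mathcal F^{t-1}$ for $\tau\geq t$, the constraint terms collapse to $t_0\sum_iQ_i(t)\overline g_i(\mu)+t_0\sum_jH_j(t)(\overline h_j(\mu)-b_j)$, while the objective terms become $V\sum_{s=t-1}^{t+t_0-2}\overline f^{s}(\mu)=Vt_0\,\overline f^{\,t-1,t_0}(\mu)$; the algorithm's own cost $-V\sum_s f^s(\mu^s)$ is bounded below by $-VFt_0$ via $|f^s|\leq F$. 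Collecting terms, the right-hand side equals $Vt_0$ times the Lagrangian of \eqref{eq:partial-static-prob} at the multipliers $(\mf Q(t)/V,\mf H(t)/V)$ plus the constants $Mt_0+\frac{V^2D_1^2t_0}{2\alpha\beta}+VFt_0+\alpha R+O(t_0^2)$. As the inequality holds for all $\mu\in\Delta$, minimizing over $\mu$ produces $q^{(t-1,t_0)}(\mf Q(t)/V,\mf H(t)/V)$ from \eqref{eq:target-dual}; multiplying through by $2$ to clear the $\frac12$ from the telescoped drift yields exactly the claimed bound, and one checks that doubling each collected constant reproduces the three blocks of $C_{V,\alpha,t_0}$ (including the $2\alpha R$).

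The hard part will be the interchange of $\min_\mu$ with $\expect{\cdot\mid\mathcal F^{t-1}}$ when the objective sequence $\{\xi^t\}$ is adversarial rather than i.i.d.: then $\overline f^{\,t-1,t_0}$ is itself random given $\mathcal F^{t-1}$, so $q^{(t-1,t_0)}$ must remain inside the expectation and one cannot naively swap the order. I would resolve this by substituting a measurable selection $\mu^\sharp$ of the minimizer of the (random) Lagrangian and exploiting the joint independence of $\{\gamma^\tau,h_j^\tau\}$ from $\{\xi^\tau\}$, so that the constraint expectations still factor as $\overline g_i(\mu^\sharp),\overline h_j(\mu^\sharp)$ after conditioning on the enlarged $\sigma$-field that renders $\mu^\sharp$ measurable. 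In the i.i.d.-objective special case $\overline f^{\,t-1,t_0}$ is deterministic and this subtlety vanishes, with $\expect{q^{(t-1,t_0)}\mid\mathcal F^{t-1}}=q^{(t-1,t_0)}(\mf Q(t)/V,\mf H(t)/V)$.
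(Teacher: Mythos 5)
Your proposal matches the paper's own proof essentially step for step: the paper's supporting Lemma \ref{lem:t0-drift} performs exactly your telescoping of \eqref{eq:interim-1} combined with \eqref{eq:res-bound}, the same frozen-multiplier replacement $Q_i(\tau),H_j(\tau)\to Q_i(t),H_j(t)$ with an $O(t_0^2)$ error controlled through the one-step increments and AM--GM, the same $VFt_0$ bound on the algorithm's own cost and $\alpha R$ bound on the telescoped Bregman terms, and the proof of Lemma \ref{lem:dual-bound} then takes conditional expectations and substitutes a minimizer $\mu_0$ of the Lagrangian so that the right-hand side becomes $q^{(t-1,t_0)}(\mf Q(t)/V,\mf H(t)/V)$, just as you describe. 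Your closing discussion of the $\min$--expectation interchange when the objectives are adversarial is a point the paper's proof passes over silently (it simply "chooses $\mu=\mu_0$" in \eqref{eq:interim-5}), and your resolution via a measurable selection together with the independence of $\{\gamma^\tau,h_j^\tau\}$ from the objective process is the correct way to make that step rigorous under the paper's standing independence assumption.
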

This bound establishes the relation between dual multipliers and the dual function. 
Next, in view of \eqref{eq:drift-bound-1}, we would like to show that $\expect{\l.q^{(t-1,t_0)}\big(\frac{\mf Q(t)}{V},~\frac{\mf H(t)}{V}\big) ~\r|\mathcal{F}^{t-1}}$ is small. This is done via Lemma \ref{lem:leb} that whenever 
$\big(\frac{\mf Q(t)}{V},~\frac{\mf H(t)}{V}\big)$ is far away from the optimal set 
$\mathcal{V}_{t-1,t_0}^*:=\text{argmax}_{\lambda,\eta}q^{(t-1,t_0)}\big(\lambda,~\eta\big)$, which is nonempty and bounded by Assumption \ref{as:selm}, $\expect{\l.q^{(t-1,t_0)}\big(\frac{\mf Q(t)}{V},~\frac{\mf H(t)}{V}\big) ~\r|\mathcal{F}^{t-1}}$ becomes negative. In fact one can prove the following lemma:
\begin{lemma}\label{lem:bound-dual-function}
{
The dual function has the following bound:
\[
\expect{q^{(t-1,t_0)}\big(\frac{\mf Q(t)}{V},~\frac{\mf H(t)}{V}\big) ~|\mathcal{F}^{t-1}}
\leq F + \overline{\ell}(G+\sqrt{2RH^2/\beta} +\overline{c}) + \overline{c}B -\overline{c}\Big\|\big(\frac{\mf Q(t)}{V},~\frac{\mf H(t)}{V}\big)\Big\|_2,
\]
where} $B$ is defined in Assumption \ref{as:selm}.
\end{lemma}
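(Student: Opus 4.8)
The plan is to control $q^{(t-1,t_0)}$ evaluated at the point $\mf{x}:=(\mf Q(t)/V,\,\mf H(t)/V)$ by combining three ingredients: a global upper bound on the optimal dual value, the linear decay guaranteed by the weak EBC far from the dual optimal set, and a supergradient (Lipschitz) bound near it. Throughout, write $\Lambda^*:=\mathcal{V}^*_{t-1,t_0}$ for the dual optimal set, which is nonempty and contained in the ball of radius $B$ by Assumption \ref{as:selm}, and let $\hat{\mf{x}}:=\proj_{\Lambda^*}(\mf{x})$, so that $\|\hat{\mf{x}}\|_2\leq B$ and $q^{(t-1,t_0)}(\hat{\mf{x}})=\max_{\lambda,\eta}q^{(t-1,t_0)}(\lambda,\eta)$.

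First I would anchor the maximal dual value. By weak duality, for a primal feasible $\mu^\dagger$ (which exists by SELM) and any $(\lambda,\eta)$ with $\lambda\geq0$, one has $q^{(t-1,t_0)}(\lambda,\eta)\leq \overline f^{t-1,t_0}(\mu^\dagger)=\overline f^{t-1,t_0}_*$, using $\overline{\mathbf g}(\mu^\dagger)\leq0$, $\overline{\mathbf h}(\mu^\dagger)=\mf b$; and since $|f^\tau(\mu)|\leq F$ pointwise (Assumption \ref{as:basic}(2)), the partial average satisfies $|\overline f^{t-1,t_0}(\mu)|\leq F$, hence $q^{(t-1,t_0)}(\hat{\mf{x}})\leq F$. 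Next I would bound the supergradients of the concave function $q^{(t-1,t_0)}$: its supergradient at a point is $(\overline{\mathbf g}(\mu_\star),\,\overline{\mathbf h}(\mu_\star)-\mf b)$ for the corresponding Lagrangian minimizer $\mu_\star$, and Jensen's inequality with Assumption \ref{as:basic} and the norm bound on $\Delta$ implied by \eqref{eq:strong-convexity} give $\|\overline{\mathbf g}(\mu_\star)\|_2\leq G$ and $\|\overline{\mathbf h}(\mu_\star)\|_2\leq\sqrt{2RH^2/\beta}$, so $q^{(t-1,t_0)}$ is Lipschitz with constant $G+\sqrt{2RH^2/\beta}$ in the region of interest.

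With these in hand I would split on $d:=\text{dist}(\mf{x},\Lambda^*)$. When $d\geq\ell_0$, Lemma \ref{lem:leb} (weak EBC, with $c_0\geq\overline c$) gives $q^{(t-1,t_0)}(\mf{x})\leq q^{(t-1,t_0)}(\hat{\mf{x}})-\overline c\,d$; when $d<\ell_0\leq\overline\ell$, the Lipschitz bound gives $q^{(t-1,t_0)}(\mf{x})\leq q^{(t-1,t_0)}(\hat{\mf{x}})+(G+\sqrt{2RH^2/\beta})\,d$. Both cases collapse into the single estimate $q^{(t-1,t_0)}(\mf{x})\leq q^{(t-1,t_0)}(\hat{\mf{x}})+\overline\ell(G+\sqrt{2RH^2/\beta}+\overline c)-\overline c\,d$: in the near case one uses $d<\overline\ell$ to absorb the increase, and in the far case the extra additive term is nonnegative slack. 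Finally I would convert distance to norm through the triangle inequality $d\geq\|\mf{x}\|_2-\|\hat{\mf{x}}\|_2\geq\|\mf{x}\|_2-B$, giving $-\overline c\,d\leq\overline c B-\overline c\|\mf{x}\|_2$; together with $q^{(t-1,t_0)}(\hat{\mf{x}})\leq F$ this is exactly the claimed pointwise inequality. Since $\mf Q(t),\mf H(t)$ are $\mathcal F^{t-1}$-measurable and the right-hand side is a measurable constant, taking $\mathbb E[\,\cdot\mid\mathcal F^{t-1}]$ of the almost-sure bound yields the statement.

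The main obstacle is the near region $d<\ell_0$, where the weak EBC provides no decay at all; the resolution is to exploit global Lipschitzness and to tune the additive constant $\overline\ell(G+\sqrt{2RH^2/\beta}+\overline c)$ precisely so that the near-region growth and the far-region decay merge into one clean $-\overline c\|\mf{x}\|_2$ bound. A secondary subtlety is that $q^{(t-1,t_0)}$ is itself random given $\mathcal F^{t-1}$ (through the conditional means $\overline f^{\tau}$ for $\tau\geq t-1$), so every estimate above must hold uniformly over realizations; this is exactly what the uniform constants $\overline c,\overline\ell$ and the uniform multiplier bound $B$ from Assumption \ref{as:selm} provide.
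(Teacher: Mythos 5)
Your proposal is correct and follows essentially the same route as the paper's own proof: anchor $q^{(t-1,t_0)}$ at the dual optimum via weak duality and $|f^\tau|\leq F$, apply the weak EBC in the far regime, control the near regime by the $\bigl(G+\sqrt{2RH^2/\beta}\bigr)$-Lipschitz property of the dual function (the paper does this by expanding the Lagrangian at the nearest multiplier and bounding cross terms by Cauchy--Schwarz, which is your supergradient bound written out by hand), and finish with the triangle inequality $\text{dist}\geq\|\mathbf{x}\|_2-B$. Your only departures are cosmetic: you split at $\ell_0$ rather than $\overline{\ell}$ and you explicitly verify that the two cases merge into the single stated bound, a step the paper leaves implicit.
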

Substituting the above lemma into \eqref{eq:drift-bound-1} and using a known stochastic drift lemma, one can prove the following bound by setting $t_0=\sqrt{T}$, $V=\sqrt{T},~\alpha = T$: 
\begin{lemma}\label{lem:q-bound}
The quantity  $\| \big(\mf Q(t),~\mf H(t)\big) \|_2$ satisfies the following conditions:
\begin{equation}\label{eq:Q-bounds}
\expect{\Big\|\big(\mf Q(t),~\mf H(t)\big)\Big\|_2} \leq C^{\prime} + C^{\prime\prime} \sqrt{T}
\end{equation}
where  $C^{\prime} := \frac{2}{\overline{c}}\big(\frac{4RH^2}{\beta} + G^{2}+\frac{2RD_2^2}{\beta}+  \frac{D_1^2}{2\beta}\big)$ and $C^{\prime\prime}:= \frac{2}{\overline{c}}\big(2F + \frac{3}{2}G^2 + \frac{2RD_2^2}{\beta} + \frac{8RH^2}{\beta} + R + \overline{\ell}(G+\sqrt{8RH^2/\beta} +\overline{c}) + \overline{c}B + 2\big(2 (G+\sqrt{2RD_2^2/\beta}) + \sqrt{8RH^2/\beta}\big)^2 \log\big(\frac{8 (2(G+\sqrt{\frac{2RD_2^2}{\beta}}) + \sqrt{\frac{8RH^2}{\beta}})^2}{\overline c^2}\big)\big)$ are absolute constants.
\end{lemma}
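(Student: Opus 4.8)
The plan is to combine the two preceding lemmas into a single multistep drift inequality for the nonnegative process $Z(t):=\|(\mf Q(t),\mf H(t))\|_2$ and then feed it into the known stochastic drift lemma invoked in the text. First I would substitute the dual-function estimate of Lemma~\ref{lem:bound-dual-function} into the $t_0$-step drift bound \eqref{eq:drift-bound-1} of Lemma~\ref{lem:dual-bound}. Because $\|(\mf Q(t)/V,\mf H(t)/V)\|_2=Z(t)/V$, the $-\overline c$ slope in Lemma~\ref{lem:bound-dual-function} gets multiplied by $2Vt_0$ and the $V$ cancels, yielding
\begin{equation*}
\expect{Z(t+t_0)^2\mid\mathcal F^{t-1}}-Z(t)^2\leq K-2t_0\overline c\,Z(t),
\end{equation*}
where $K:=2Vt_0\big(F+\overline\ell(G+\sqrt{2RH^2/\beta}+\overline c)+\overline c B\big)+C_{V,\alpha,t_0}$ gathers all the $Z$-independent constants. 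This is the conceptual heart: the SELM assumption, routed through the weak EBC of Lemma~\ref{lem:leb} and the estimate of Lemma~\ref{lem:bound-dual-function}, has turned the Lyapunov drift of $\|(\mf Q,\mf H)\|_2^2$ into something with a strictly negative, $V$-independent pull toward the origin as soon as $Z(t)$ is large.

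Second, I would record a deterministic one-step increment bound $|Z(t+1)-Z(t)|\leq\delta_{\max}$ with $\delta_{\max}$ an absolute constant. Applying $\big|\,\|a\|_2-\|b\|_2\,\big|\leq\|a-b\|_2$ to the updates \eqref{eq:Q-update} and \eqref{eq:H-update}, the $\mf Q$-increment is controlled by $G+\sqrt{2RD_2^2/\beta}$ (from Assumption~\ref{as:basic}(1)--(2) and $\|\mu^t-\mu^{t-1}\|\leq\sqrt{2R/\beta}$, which follows from \eqref{eq:strong-convexity} and Assumption~\ref{as:basic}(3)), and the $\mf H$-increment by $\sqrt{8RH^2/\beta}$ (from $\|\mu^t\|\leq\sqrt{2R/\beta}$, since the origin lies in $\Delta$, together with $|b_j|\leq\sqrt{2R/\beta}\,\E\|h_j\|_*$). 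This $\delta_{\max}$ is exactly the quantity destined to appear inside the logarithm of the final bound.

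Third, I would convert the second-moment inequality into the first-moment drift that the lemma requires. By Jensen, $\expect{Z(t+t_0)\mid\mathcal F^{t-1}}\leq\sqrt{Z(t)^2-2t_0\overline c Z(t)+K}$, and an elementary quadratic estimate shows this is at most $Z(t)-t_0\zeta$ for a $\zeta$ comparable to $\overline c$ once $Z(t)$ exceeds a threshold $\theta:=\Theta(K/(t_0\overline c))$; the almost-sure increment bound supplies the complementary estimate $\expect{Z(t+t_0)-Z(t)\mid\mathcal F^{t-1}}\leq t_0\delta_{\max}$ for all $Z(t)$. Setting $t_0=\sqrt T$, $V=\sqrt T$, $\alpha=T$ makes $K=\Theta(T)$ — its dominant pieces being $2Vt_0(\cdots)$, the $2VFt_0$ and $2(\tfrac32 G^2+\tfrac{2RD_2^2}\beta+\tfrac{8RH^2}\beta)t_0^2$ and $2\alpha R$ parts of $C_{V,\alpha,t_0}$ — so that $\theta=\Theta(\sqrt T)$ while $\delta_{\max}$ and $\zeta$ stay $O(1)$.

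Finally, I would invoke the stochastic drift lemma (bounded increments $\delta_{\max}$ plus $t_0$-step negative drift $-t_0\zeta$ outside the radius-$\theta$ ball), whose conclusion has the shape $\expect{Z(t)}\leq\theta+t_0\delta_{\max}+t_0\frac{4\delta_{\max}^2}{\zeta}\log(8\delta_{\max}^2/\zeta^2)$. Here $t_0\delta_{\max}$ and the logarithmic term are $O(\sqrt T)$ and land in $C''\sqrt T$, while $\theta=K/(t_0\overline c)$ splits: the part coming from the $O(T)$ pieces of $K$ again scales like $\sqrt T$ and feeds $C''$, whereas the part coming from the first, $O(\sqrt T)$, group of $C_{V,\alpha,t_0}$ divides down to the $O(1)$ constant $C'=\frac2{\overline c}(\tfrac{4RH^2}\beta+G^2+\tfrac{2RD_2^2}\beta+\tfrac{D_1^2}{2\beta})$. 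I expect the main obstacle to be this third step: extracting a first-order negative drift with the correct constant out of a purely second-moment inequality (the naive Jensen bound degrades $\overline c$ by a constant factor, so hitting the exact $\zeta$, $\theta$, and hence the precise $C',C''$, demands a careful quadratic estimate and patient constant tracking), together with confirming that $\theta$ scales as $\sqrt T$ and no faster, since any mis-scaling there would break the advertised $\mathcal O(\sqrt T)$ rate.
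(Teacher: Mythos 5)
Your proposal is correct and follows essentially the same route as the paper's own proof: substitute Lemma \ref{lem:bound-dual-function} into \eqref{eq:drift-bound-1} to get the conditional drift bound \eqref{eq:drift-bound-2}, verify the bounded-increment condition with $\delta_{\max}=2(G+\sqrt{2RD_2^2/\beta})+\sqrt{8RH^2/\beta}$ and the $t_0$-step negative-drift condition above the threshold $\theta=\Theta(K/(t_0\overline{c}))$, then invoke Lemma \ref{lm:drift-random-process-bound} with $t_0=V=\sqrt{T}$, $\alpha=T$ and split $\theta$ into the $C'$ and $C''\sqrt{T}$ pieces exactly as you describe. Your caveat about the Jensen/quadratic step is in fact well founded: the paper's own completing-the-square step asserts $Z^2-\overline{c}t_0Z+\overline{c}^2t_0^2\leq (Z-\overline{c}t_0)^2$, which holds only when $Z\leq 0$, so a rigorous version must settle for a drift constant $\zeta$ that is a fixed fraction of $\overline{c}$ (e.g.\ $\zeta=\overline{c}/2$, which does satisfy $Z^2-\overline{c}t_0Z\leq (Z-\tfrac{\overline{c}}{2}t_0)^2$ above the same threshold), changing $C''$ only by absolute constant factors.
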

Substituting the bound \eqref{eq:Q-bounds} into Lemma \ref{lem:constraint} with $\alpha= T$ and $V=\sqrt{T}$  gives the final constraint violation bounds.

\vspace{-0.7em}
\section{The probability simplex case}\label{sec:prob-simplex}
\vspace{-5pt}
{ In this section, we deal with the probability simplex case where the decision set $\Delta$ is a $d$-dimensional probability simplex with huge $d$.  While Algorithm \ref{alg:new-alg} can be applied to solve such problems by choosing $D(\mu, \mu^{t-1})$ to be $\|\mu-\mu^{t-1}\|_2^{2}$, due to the dependencies on the $D_1, D_2,G,H,F$,
 the constant factors in Theorem \ref{main:theorem} linearly depend on $d$. For mirror descent over a probability simplex, to improve the dimension dependence,  people usually choose the Bregman divergence distance $D(\cdot, \cdot)$ to be the KL divergence. 
However, KL divergence fundamentally violates the third assumption in Assumption \ref{as:basic}.  We now present an alternative algorithm in Algorithm \ref{alg:prob-simplex} and shows that it can achieve sublinear regret and constraint violations that logarithmically depends on $d$ .}
 \vspace{-5pt}

\begin{algorithm} 
\caption{}
\label{alg:prob-simplex}
Let $V, \alpha> 0$,~$\theta\in[0,1)$ be some trade-off parameters. Let $D(\mu_{1}, \mu_{2}) = \sum_{i=1}^{d} \mu_{1}(i) \log \frac{\mu_{1}(i)}{\mu_{2}(i)}$. Let $Q_i(t),~H_j(t)$ be sequences of dual multipliers such that $Q_i(0) = 0,~H_j(0) = 0,~\forall i,j$. Let $\mu_0=\mu_{-1} =  \frac1d\mathbf{1}$. 

\textbf{For} t = 0 to $T-1$:
\begin{enumerate}[leftmargin=15pt]
\item Let $\tilde{\mu}^{t-1} = (1-\theta)\mu^{t-1} + \frac{\theta}{d}\mathbf{1}$.
\item  Choose $\mu^{t}$ as a solution to the following problem:
\vspace{-0.3em}
\begin{align}
 \min_{\mu\in \Delta} \Big\langle V\nabla f^{t-1}(\mu^{t-1}) + \sum_{i=1}^{L} Q_i(t)\nabla g_i^{t-1}(\mu^{t-1}) 
 + \sum_{j=1}^{M} H_i(t)h_i^{t-1}, \mu \Big\rangle + \alpha D(\mu,\tilde{\mu}^{t-1})   \label{eq:mu-update}
\end{align}
\item Update each dual multiplier $Q_i(t), H_j(t)$ via \eqref{eq:Q-update} and \eqref{eq:H-update}. 
\item Observe the objective function $f^{t}$ and constraint functions $\{g_i^t\}_{i=1}^L,~\{h_j^t\}_{j=1}^M$.
\end{enumerate}
\textbf{End for}.
\end{algorithm}

Compared to Algorithm \ref{alg:new-alg}, Algorithm \ref{alg:prob-simplex} uses the K-L divergence as the particular Bregman divergence and introduces a probability mixing step $\tilde{\mu}^{t-1} = (1-\theta)\mu^{t-1} + \frac{\theta}{d}\mathbf{1}$, which pushes the update away from the boundary, at each round. Furthermore, it is known that the problem \eqref{eq:mu-update} admits a closed form solution known as the exponential gradient update \citep{Hazan16FoundationTrends}. More specifically, define
\vspace{-0.6em} 
$$\mathbf{p}^{t-1} := \alpha^{-1}\big(V\nabla f^{t-1}(\mu^{t-1}) + \sum_{i=1}^{L} Q_i(t)\nabla g_i^{t-1}(\mu^{t-1}) 
 + \sum_{j=1}^{M} H_i(t)h_i^{t-1}\big).$$ 
Then, the update $\mu^t$ can simply be written as $\mu^t_i = \frac{\tilde{\mu}^{t-1}_i\exp(-p_i^{t-1})}{\sum_{k=1}^d\tilde{\mu}^{t-1}_k\exp(-p_k^{t-1})},~~i\in\{1,2,\cdots,d\}.$

We have the following performance bound on this algorithm whose proof is similar to Theorem \ref{main:theorem} and delayed to the Supplement (Section \ref{sec:pf-prob-simplex}):

\begin{theorem}\label{main:theorem-simplex}
Suppose the first two in Assumption \ref{as:basic} (using $\Vert\cdot\Vert=\Vert\cdot\Vert_{1}$ and $\Vert\cdot\Vert_{\ast}=\Vert\cdot\Vert_{\infty}$) and Assumption \ref{as:selm} hold. Let $\overline{c},~\overline{\ell}>0$ be absolute constants such that 
$c_0\geq\overline{c}$ and $\ell_0\leq\overline{\ell}$ for all $c_0,~\ell_0$ obtained in Lemma \ref{lem:leb} over $t=0,1,2,\cdots, T-1$ and $k=\sqrt{T}$. Choose $\alpha = T,~V=\sqrt{T}$, $\theta = 1/T$ in Algorithm \ref{alg:prob-simplex}.
The expected regret and constraint violations satisfy:
\vspace{-0.5em}
{\small
\begin{align*}
&\frac1T\sum_{t=0}^{T-1}\expect{\overline{f}^t(\mu^t) - f_t(\mu^*)}\leq \frac{\hat{C}_{0}^{\prime}}{\sqrt{T}} +\frac{\hat{C}_{0}^{\prime}\log(d)}{\sqrt{T}} \\
&\mathbb{E}\l\|\l[\frac1T\sum_{t=0}^{T-1}\overline{\mathbf{g}}(\mu^t)\r]_+\r\|_2
\leq  \frac{\hat{C}^{\prime}_{1}}{\sqrt{T}} + \frac{\hat{C}^{\prime\prime}_{1}\log(Td)}{\sqrt{T}},\\ 
&\mathbb{E}\l\|\frac1T\sum_{t=0}^{T-1}\overline{\mf h}(\mu^t) - \mf b\r\|_2\leq  \frac{\hat{C}^{\prime}_{2}}{\sqrt{T}} + \frac{\hat{C}^{\prime\prime}_{2}\log(Td)}{\sqrt{T}}. 
\end{align*}
}%
where $\hat{C}_0^{\prime}, \hat{C}_1^{\prime}, \hat{C}_{1}^{\prime\prime}, \hat{C}_{2}^{\prime}, \hat{C}_{2}^{\prime\prime}$ are absolute constants depending linearly on $D_1^2+D_1+ D_2^2+G^2+H^2+G+H+F$ and independent of $d$ or $T$. (Note that $D_{1}, D_{2}, G, H, F$ in Assumption \ref{as:basic} are independent of $d$ when $\Vert\cdot\Vert_{\ast}=\Vert\cdot\Vert_{\infty}$.)
\end{theorem}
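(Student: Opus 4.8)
The plan is to re-run the two-part argument behind Theorem~\ref{main:theorem}---regret via a drift-plus-penalty telescope, constraint violation via a dual-multiplier drift fed through the weak EBC---while tracking exactly where the Bregman bound $R$ of Assumption~\ref{as:basic}(3) was invoked and replacing each appearance by a quantity adapted to the KL geometry. Two facts make this possible. First, Pinsker's inequality gives strong-convexity modulus $\beta=1$ of the KL divergence with respect to $\|\cdot\|_1$, so \eqref{eq:strong-convexity} survives verbatim and the lower bound of Lemma~\ref{lem:lhs-lower-bound} carries over with $\|\cdot\|_*=\|\cdot\|_\infty$. Second, the exponential-gradient closed form keeps every iterate $\mu^t$ strictly in $\Delta^o$, so the Pushback Lemma~\ref{lem:strong-convex} applies at each step with the mixed reference $\tilde\mu^{t-1}$ in place of $\mu^{t-1}$.

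For the regret bound I would first reprove the analog of Lemma~\ref{lem:strong-convex-queue} using $D(\cdot,\tilde\mu^{t-1})$. The crucial change is in the constant $M$: on the simplex the per-step displacement obeys the trivial diameter bound $\|\mu^t-\mu^{t-1}\|_1\le 2$, so every occurrence of $2R/\beta$ (which originally came from bounding $\|\mu^t-\mu^{t-1}\|^2$ through the Bregman bound) is replaced by an absolute constant independent of $d$ and $T$. Telescoping no longer cancels cleanly, because the reference $\tilde\mu^{t}$ used at step $t+1$ differs from the iterate $\mu^{t}$; I would control the residual by the elementary estimate
\[
D(\mu^*,\tilde\mu^{t}) - D(\mu^*,\mu^{t}) = \sum_{i=1}^d\mu^*_i\log\frac{\mu^t_i}{\tilde\mu^t_i}\le -\log(1-\theta),
\]
using $\tilde\mu^t_i\ge(1-\theta)\mu^t_i$. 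With $\theta=1/T$ this residual is $O(1/T)$ per step, so the accumulated mixing error, after dividing by $TV$, is $O(\alpha\theta/V)=O(1/\sqrt T)$. The only surviving logarithmic factor is the initialization term $\tfrac{\alpha}{VT}D(\mu^*,\mu^0)$: since $\mu^0=\tfrac1d\mathbf 1$ we have $D(\mu^*,\tfrac1d\mathbf 1)=\log d+\sum_i\mu^*_i\log\mu^*_i\le\log d$, which yields exactly the $\hat C_0'\log(d)/\sqrt T$ term.

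For the constraint violations I would reuse Lemmas~\ref{lem:constraint}--\ref{lem:q-bound} essentially unchanged, since Lemmas~\ref{lem:bound-dual-function} and~\ref{lem:q-bound} rest on the weak EBC (Lemma~\ref{lem:leb}) and SELM (Assumption~\ref{as:selm}), both of which concern only the dual function $q^{(t,k)}$ and are insensitive to the choice of Bregman divergence. The single place the simplex geometry intervenes is the $t_0$-step drift constant $C_{V,\alpha,t_0}$ of Lemma~\ref{lem:dual-bound}: its boundary term $2\alpha R$ and the $R$-weighted pieces now arise from $D(\mu^*,\tilde\mu^{t-1})$, which for a mixed reference obeys $D(\mu^*,\tilde\mu^{t-1})\le\log(d/\theta)=\log(dT)$. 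Propagating this through Lemma~\ref{lem:q-bound} gives $\expect{\|(\mf Q(t),\mf H(t))\|_2}\le C'+C''\sqrt T$ with $C''$ now carrying a $\log(dT)$ factor, and substituting into Lemma~\ref{lem:constraint} with $\alpha=T,\,V=\sqrt T$ produces the claimed $\hat C_1''\log(Td)/\sqrt T$ and $\hat C_2''\log(Td)/\sqrt T$ bounds.

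The main obstacle I anticipate is bookkeeping rather than a new idea: one must verify that the mixing step is the \emph{only} source of telescoping mismatch and that its residual decouples from the dimension, so that the regret keeps a clean $\log d$ and does not pick up a spurious $\log T$, while simultaneously confirming that the unavoidable $\log(d/\theta)=\log(dT)$ from the mixed-reference Bregman bound lands only on the dual-multiplier constant $C''$ and hence on the constraint-violation bounds. Concretely, the delicate point is checking that replacing $2R/\beta$ by the simplex diameter inside $M$ is legitimate at \emph{every} place $R$ entered the proof of Lemma~\ref{lem:strong-convex-queue}, i.e.\ that no hidden use of the Bregman bound reintroduces a $\log(dT)$ into the regret constant and thereby breaks the asymmetry between the two bounds.
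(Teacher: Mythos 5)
Your proposal is correct and takes essentially the same route as the paper's proof in Section \ref{sec:pf-prob-simplex}: re-run the drift-plus-penalty and dual-drift arguments with the mixed reference $\tilde{\mu}^{t-1}$, replace every appearance of the Bregman bound $R$ by either an $\ell_1$-diameter bound or $D(\mu^*,\tilde{\mu}^{t-1})\leq\log(d/\theta)$, keep the SELM/EBC dual machinery untouched so that the $\log(Td)$ factor lands only in the constraint-violation constants, and extract the regret's $\log d$ from the initialization $D(\mu^*,\tfrac1d\mathbf{1})\leq\log d$. The only deviations are minor: your mixing-residual estimate $D(\mu^*,\tilde{\mu}^t)-D(\mu^*,\mu^t)\leq-\log(1-\theta)$ is dimension-free and slightly sharper than the paper's Lemma \ref{lem:diff} bound $\theta\log d$ (harmless either way, since the initialization already contributes the $\log d$), and you gloss over the extra $-V\theta D_1$ term that the reference mismatch forces into the paper's analogue of Lemma \ref{lem:lhs-lower-bound} (its bound \eqref{eq:res-bound-1}), which is $O(1/T)$-negligible under $\theta=1/T$ and so does not affect correctness.
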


\section{Simulation experiments}

We consider the problem of cost minimization under budget pacing constraints in data center service scheduling. More specifically, consider a geographically distributed data center consists of 5 server clusters serving one stream of incoming jobs arriving at a central controller. Each cluster contains 10 servers. The jobs are directed to different clusters for processing by controller with different per unit electricity costs. In the simulation, we use electricity market price (EMP) data traces from 5 zones of New York ISO open access pricing data (\url{http://www.nyiso.com/}). For example, Fig \ref{fig}(a) depicts the per 5 min EMP data of zone DUNWOD between 05/01/2017 and 05/10/2017. The number of incoming jobs per 5 min is $\lambda(t)$, which is assumed to be poisson distributed with mean equals 1000. each server $k$ can choose a power allocation option $\mu_k^t\in[0,30]$. This option determines the following over the 5 min slot: (1) The electricity money spend of server $k$: $f_k^t(\mu_k^t) = c_k^t\cdot \mu_k^t$, where $c_k^t$ is the per unit EMP of the zone server $k$ belongs to.                                                                                                                                                                                                                                                                                                                           
(2) The number of jobs served $g_k^t(\mu_k^t)$ which follows a Pareto distribution (a.k.a. power law, see \citep{gandhi2012sleep}) of mean $8\log(1+4\mu_k^t)$. (3) Internal budget consumptions $h_k^t\cdot \mu_k^t$, where $h_k^t$ follows a Pareto distribution of mean 5 units. In a typical online service system such as ads service, budget is a measure of internal resources \citep{agarwal2014budget}. The goal is to minimize the total average electricity cost over $T=10000$ slots, i.e. $\sum_{t=1}^T\sum_{k=1}^{50}\expect{c_k^t\cdot \mu_k^t}/T$, subject to the following two requirements: (1) The service rate supports the arrival rate: $\sum_{t=1}^T\sum_{k=1}^{50}\expect{g_k^t(\mu_k^t)}\geq \sum_{t=1}^T\expect{\lambda(t)}$, which is a convex inequality constraint. (2) The internal budget consumption is well-paced, i.e. each cluster consumes a fixed ratio of the total consumed budget in expectation. More specifically, in the simulation, let $\mc I_1,\cdots, \mc I_5$ be index sets of 5 clusters, then, it is required that 
$\sum_{t=1}^T\sum_{k\in \mc I_j}\expect{h_k^t\cdot \mu_k^t} = \beta_j\cdot \sum_{t=1}^T\sum_{k=1}^{50}\expect{h_k^t\cdot \mu_k^t},~~j=1,2,3$ and 
$\sum_{t=1}^T\sum_{k\in \mc I_4\cup\mc I_5}\expect{h_k^t\cdot \mu_k^t} = \beta_4\cdot \sum_{t=1}^T\sum_{k=1}^{50}\expect{h_k^t\cdot \mu_k^t}$, where $[\beta_1,~\beta_2,~\beta_3,~\beta_4] = [0.05,~0.10,~0.25,~0.60]$. In Fig \ref{fig}, we compare our proposed algorithm with the best fixed solution in hindsight choosing the best fixed power allocation knowing all the data, and a benchmark Reac algorithm \citep{gandhi2012sleep}. 
The Reac algorithm is adapted to our pacing scenario by estimating the number of jobs in the next slot via the average of past 10 slots and assign the load according to the pacing ratio. For cluster 4 and cluster 5 (which take up a total ratio of 0.60), the Reac algorithm evenly distribute the workload between the two.  
Our algorithm achieves a similar electricity money spend with the best fixed solution which is better than Reac, while keeping the average number of unserved job low and achieving a fast budget pacing.  

\begin{figure*}[ht!] 
    \centering
    \begin{subfigure}[t]{0.46\textwidth}
        \centering
        \includegraphics[height=5cm] {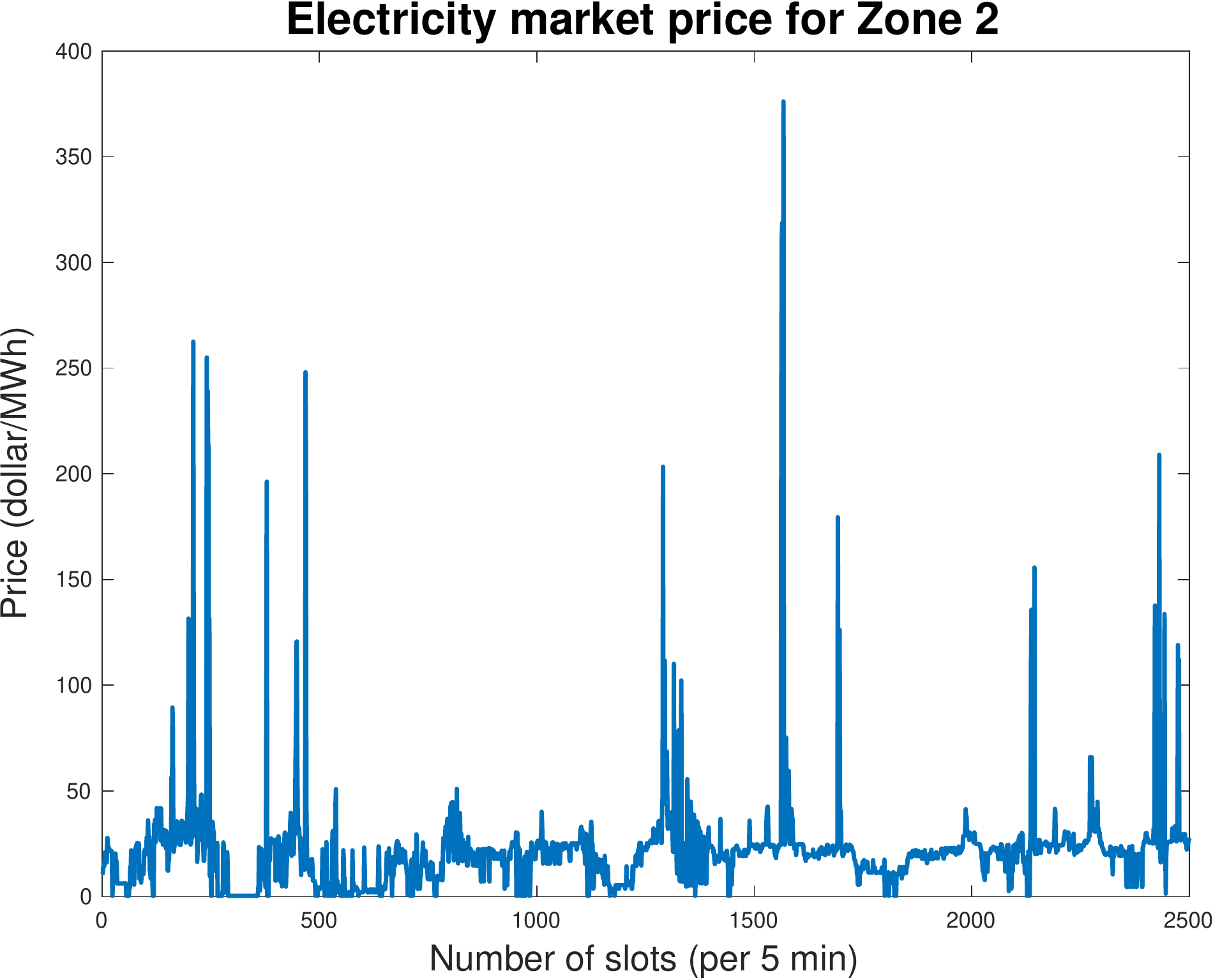}
        \caption{}
    \end{subfigure}%
    ~ 
    \begin{subfigure}[t]{0.46\textwidth}
        \centering
        \includegraphics[height=5cm] {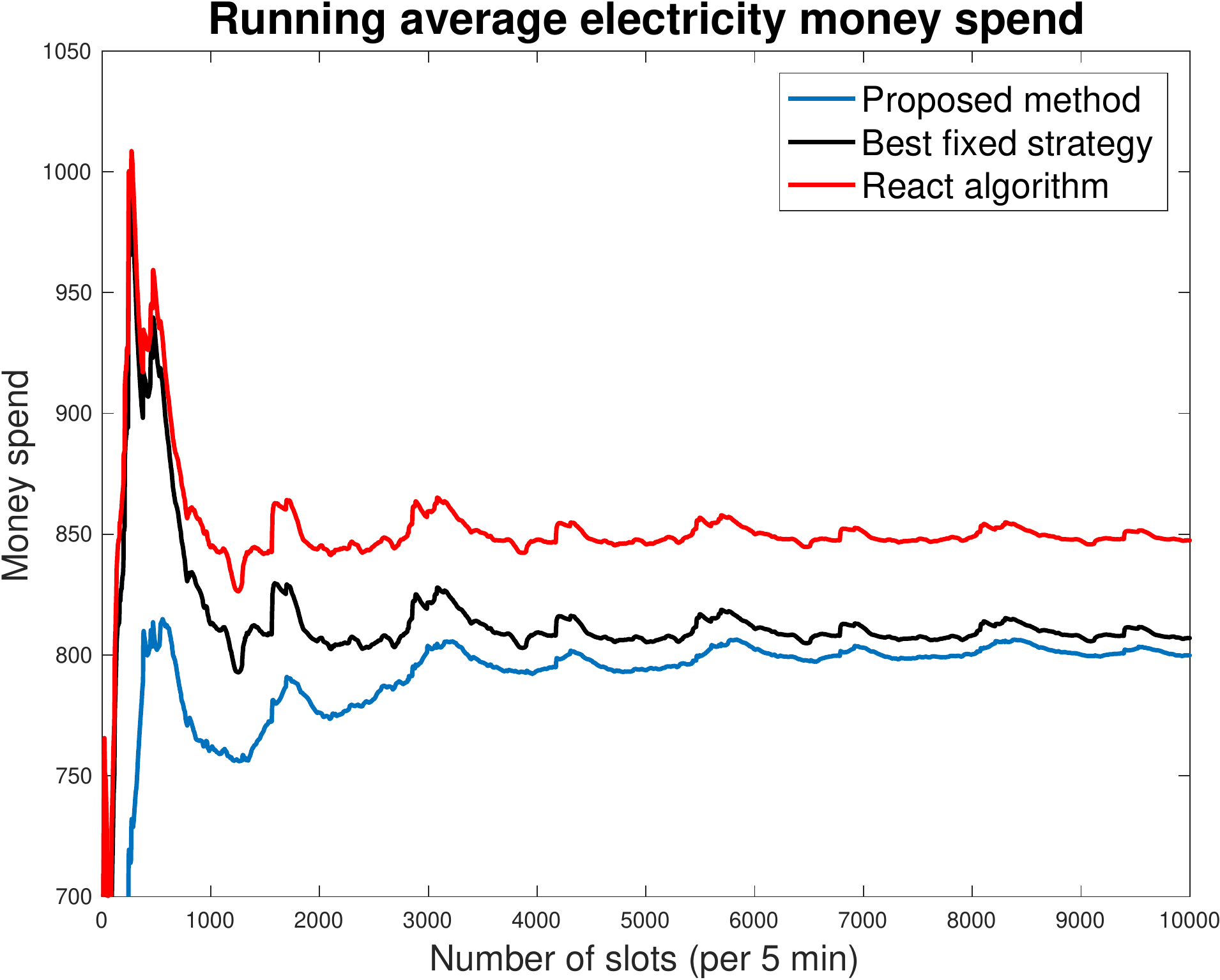}
        \caption{}
    \end{subfigure}
    ~
    \begin{subfigure}[t]{0.46\textwidth}
        \centering
        \includegraphics[height=5cm] {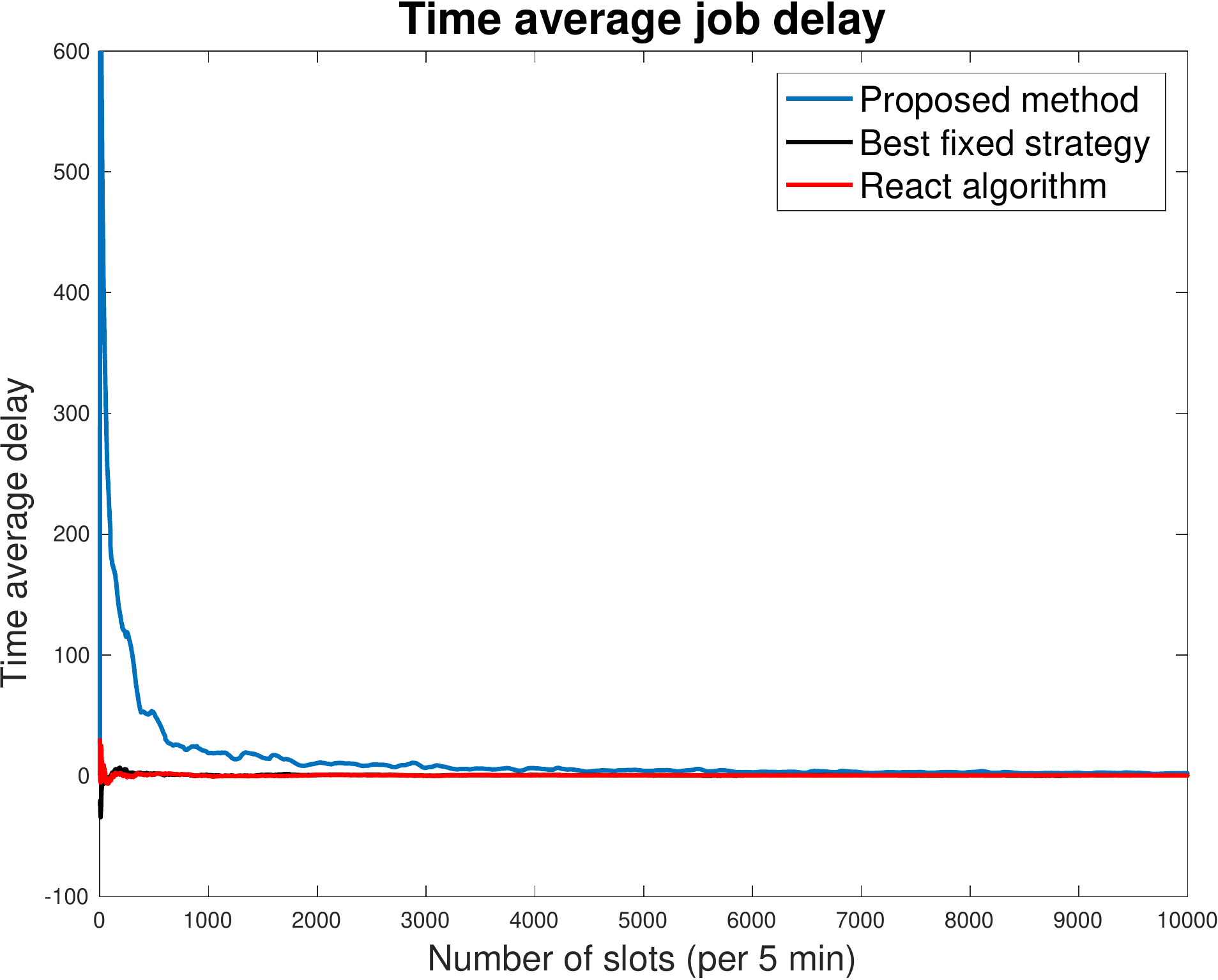}
        \caption{}
    \end{subfigure}
    ~      
    \begin{subfigure}[t]{0.46\textwidth}
        \centering
        \includegraphics[height=5cm] {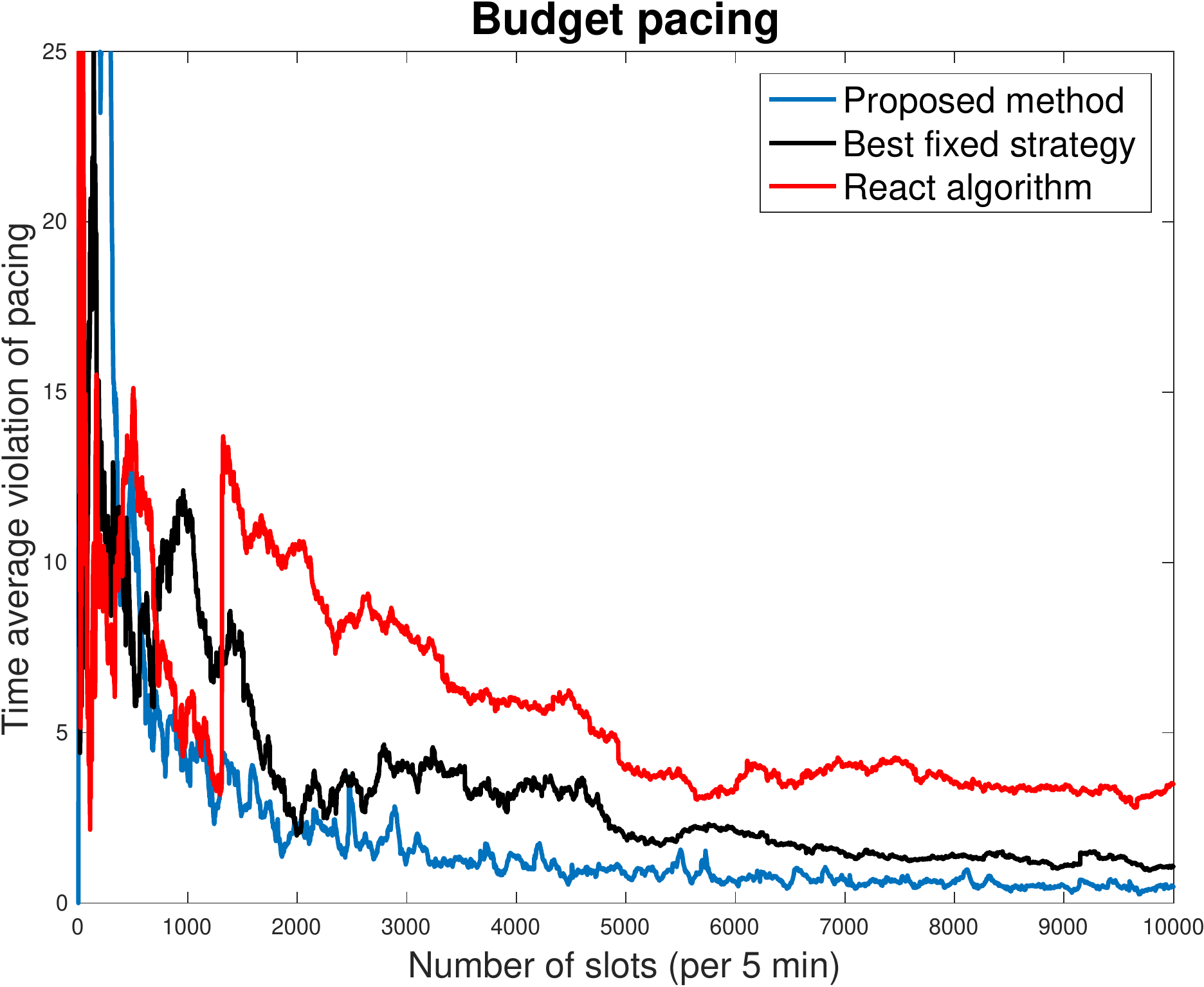}
        \caption{}
    \end{subfigure}   
    \caption{\small{(a) Electricity market prices at zone DUNWOD New York; (b) Average money spent buying electricity; (c) Average unserved jobs. (d) Average violation of pacing constraints.}}
    \label{fig}
\end{figure*}

\section{Conclusions}
This paper proposes a new primal-dual online mirror descent framework for stochastic constrained online learning problem. We introduce a new sequential existence of Lagrange multipliers condition, which is shown to be strictly weaker than the Slater condition, and prove that the proposed algorithm enjoys a $\mathcal{O}(\sqrt{T})$ expected regret and constraint violations. We also obtain an almost dimension free result in the special case when the decision set is a probability simplex.

\acks{This work is supported in part by grant NSF CCF-1718477.}

\bibliographystyle{chicago}
\bibliography{bibliography}

\begin{thebibliography}{31}
\providecommand{\natexlab}[1]{#1}
\providecommand{\url}[1]{\texttt{#1}}
\expandafter\ifx\csname urlstyle\endcsname\relax
  \providecommand{\doi}[1]{doi: #1}\else
  \providecommand{\doi}{doi: \begingroup \urlstyle{rm}\Url}\fi

\bibitem[Agarwal et~al.(2014)Agarwal, Ghosh, Wei, and You]{agarwal2014budget}
Deepak Agarwal, Souvik Ghosh, Kai Wei, and Siyu You.
\newblock Budget pacing for targeted online advertisements at linkedin.
\newblock In \emph{Proceedings of the 20th ACM SIGKDD international conference
  on Knowledge discovery and data mining}, pages 1613--1619. ACM, 2014.

\bibitem[Bertsekas(1999)]{bertsekas1999nonlinear}
Dimitri~P Bertsekas.
\newblock \emph{Nonlinear programming}.
\newblock Athena scientific Belmont, 1999.

\bibitem[Boyd and Vandenberghe(2004)]{boyd2004convex}
Stephen Boyd and Lieven Vandenberghe.
\newblock \emph{Convex optimization}.
\newblock Cambridge university press, 2004.

\bibitem[Cesa-Bianchi et~al.(1996)Cesa-Bianchi, Long, and
  Warmuth]{Cesa-Bianchi96TNN}
Nicol{\`o} Cesa-Bianchi, Philip~M Long, and Manfred~K Warmuth.
\newblock Worst-case quadratic loss bounds for prediction using linear
  functions and gradient descent.
\newblock \emph{IEEE Transactions on Neural Networks}, 7\penalty0 (3):\penalty0
  604--619, 1996.

\bibitem[Chen and Giannakis(2019)]{chen2019bandit}
Tianyi Chen and Georgios~B Giannakis.
\newblock Bandit convex optimization for scalable and dynamic iot management.
\newblock \emph{IEEE Internet of Things Journal}, 6\penalty0 (1):\penalty0
  1276--1286, 2019.

\bibitem[Deng et~al.(2017)Deng, Lai, Peng, and Yin]{deng2017parallel}
Wei Deng, Ming-Jun Lai, Zhimin Peng, and Wotao Yin.
\newblock Parallel multi-block admm with o (1/k) convergence.
\newblock \emph{Journal of Scientific Computing}, 71\penalty0 (2):\penalty0
  712--736, 2017.

\bibitem[Gandhi et~al.(2012)Gandhi, Harchol-Balter, and
  Kozuch]{gandhi2012sleep}
Anshul Gandhi, Mor Harchol-Balter, and Michael~A Kozuch.
\newblock Are sleep states effective in data centers?
\newblock In \emph{2012 International Green Computing Conference (IGCC)}, pages
  1--10. IEEE, 2012.

\bibitem[Gauvin(1977)]{gauvin1977necessary}
Jacques Gauvin.
\newblock A necessary and sufficient regularity condition to have bounded
  multipliers in nonconvex programming.
\newblock \emph{Mathematical Programming}, 12\penalty0 (1):\penalty0 136--138,
  1977.

\bibitem[Gordon(1999)]{Gordon99COLT}
Geoffrey~J Gordon.
\newblock Regret bounds for prediction problems.
\newblock In \emph{Proceeding of Conference on Learning Theory (COLT)}, 1999.

\bibitem[Hazan(2016)]{Hazan16FoundationTrends}
Elad Hazan.
\newblock Introduction to online convex optimization.
\newblock \emph{Foundations and Trends in Optimization}, 2\penalty0
  (3--4):\penalty0 157--325, 2016.

\bibitem[Jenatton et~al.(2016)Jenatton, Huang, and Archambeau]{Jenatton16ICML}
Rodolphe Jenatton, Jim Huang, and C{\'e}dric Archambeau.
\newblock Adaptive algorithms for online convex optimization with long-term
  constraints.
\newblock In \emph{Proceedings of International Conference on Machine Learning
  (ICML)}, 2016.

\bibitem[Liakopoulos et~al.(2019)Liakopoulos, Destounis, Paschos, Spyropoulos,
  and Mertikopoulos]{pmlr-v97-liakopoulos19a}
Nikolaos Liakopoulos, Apostolos Destounis, Georgios Paschos, Thrasyvoulos
  Spyropoulos, and Panayotis Mertikopoulos.
\newblock Cautious regret minimization: Online optimization with long-term
  budget constraints.
\newblock In Kamalika Chaudhuri and Ruslan Salakhutdinov, editors,
  \emph{Proceedings of the 36th International Conference on Machine Learning},
  volume~97 of \emph{Proceedings of Machine Learning Research}, pages
  3944--3952, Long Beach, California, USA, 09--15 Jun 2019. PMLR.
\newblock URL \url{http://proceedings.mlr.press/v97/liakopoulos19a.html}.

\bibitem[Mahdavi et~al.(2012)Mahdavi, Jin, and Yang]{Mahdavi12JMLR}
Mehrdad Mahdavi, Rong Jin, and Tianbao Yang.
\newblock Trading regret for efficiency: online convex optimization with long
  term constraints.
\newblock \emph{Journal of Machine Learning Research}, 13\penalty0
  (1):\penalty0 2503--2528, 2012.

\bibitem[Mannor et~al.(2009)Mannor, Tsitsiklis, and Yu]{Mannor09JMLR}
Shie Mannor, John~N Tsitsiklis, and Jia~Yuan Yu.
\newblock Online learning with sample path constraints.
\newblock \emph{Journal of Machine Learning Research}, 10:\penalty0 569--590,
  March 2009.

\bibitem[Nedi{\'c} and Ozdaglar(2009)]{nedic2009approximate}
Angelia Nedi{\'c} and Asuman Ozdaglar.
\newblock Approximate primal solutions and rate analysis for dual subgradient
  methods.
\newblock \emph{SIAM Journal on Optimization}, 19\penalty0 (4):\penalty0
  1757--1780, 2009.

\bibitem[Neely(2014)]{neely2014simple}
Michael~J Neely.
\newblock A simple convergence time analysis of drift-plus-penalty for
  stochastic optimization and convex programs.
\newblock \emph{arXiv preprint arXiv:1412.0791}, 2014.

\bibitem[Nemirovski et~al.(2009)Nemirovski, Juditsky, Lan, and
  Shapiro]{nemirovski2009robust}
Arkadi Nemirovski, Anatoli Juditsky, Guanghui Lan, and Alexander Shapiro.
\newblock Robust stochastic approximation approach to stochastic programming.
\newblock \emph{SIAM Journal on optimization}, 19\penalty0 (4):\penalty0
  1574--1609, 2009.

\bibitem[Nguyen et~al.(1980)Nguyen, Strodiot, and
  Mifflin]{nguyen1980conditions}
V~Hien Nguyen, J-J Strodiot, and Robert Mifflin.
\newblock On conditions to have bounded multipliers in locally lipschitz
  programming.
\newblock \emph{Mathematical Programming}, 18\penalty0 (1):\penalty0 100--106,
  1980.

\bibitem[Titov et~al.(2018)Titov, Stonyakin, Gasnikov, and
  Alkousa]{titov2018mirror}
Alexander~A Titov, Fedor~S Stonyakin, Alexander~V Gasnikov, and Mohammad~S
  Alkousa.
\newblock Mirror descent and constrained online optimization problems.
\newblock In \emph{International Conference on Optimization and Applications},
  pages 64--78. Springer, 2018.

\bibitem[Tseng(2005)]{tseng2005}
Paul Tseng.
\newblock On accelerated proximal gradient methods for convex-concave
  optimization.
\newblock \emph{MIT Technical Report}, 2005.

\bibitem[Tseng(2010)]{tseng2010approximation}
Paul Tseng.
\newblock Approximation accuracy, gradient methods, and error bound for
  structured convex optimization.
\newblock \emph{Mathematical Programming}, 125\penalty0 (2):\penalty0 263--295,
  2010.

\bibitem[Wei et~al.(2018)Wei, Yu, Ling, and Neely]{wei2018solving}
Xiaohan Wei, Hao Yu, Qing Ling, and Michael Neely.
\newblock Solving non-smooth constrained programs with lower complexity than
  $\mathcal{O}(1/\varepsilon)$: A primal-dual homotopy smoothing approach.
\newblock In \emph{Advances in Neural Information Processing Systems}, pages
  3999--4009, 2018.

\bibitem[Xu et~al.(2017)Xu, Liu, Lin, and Yang]{xu2017admm}
Yi~Xu, Mingrui Liu, Qihang Lin, and Tianbao Yang.
\newblock Admm without a fixed penalty parameter: Faster convergence with new
  adaptive penalization.
\newblock In \emph{Advances in Neural Information Processing Systems}, pages
  1267--1277, 2017.

\bibitem[Yang and Lin(2015)]{yang2015rsg}
Tianbao Yang and Qihang Lin.
\newblock Rsg: Beating subgradient method without smoothness and strong
  convexity.
\newblock \emph{arXiv preprint arXiv:1512.03107}, 2015.

\bibitem[Yi et~al.(2019)Yi, Li, Xie, and Johansson]{yi2019distributed}
Xinlei Yi, Xiuxian Li, Lihua Xie, and Karl~H Johansson.
\newblock Distributed online convex optimization with time-varying coupled
  inequality constraints.
\newblock \emph{arXiv preprint arXiv:1903.04277}, 2019.

\bibitem[Yu and Neely(2016)]{yu2016low}
Hao Yu and Michael~J Neely.
\newblock A low complexity algorithm with $\mathcal{O}(\sqrt{T})$ regret and
  finite constraint violations for online convex optimization with long term
  constraints.
\newblock \emph{arXiv preprint arXiv:1604.02218}, 2016.

\bibitem[Yu and Neely(2017)]{yu2017simple}
Hao Yu and Michael~J Neely.
\newblock A simple parallel algorithm with an ${O}(1/t)$ convergence rate for
  general convex programs.
\newblock \emph{SIAM Journal on Optimization}, 27\penalty0 (2):\penalty0
  759--783, 2017.

\bibitem[Yu et~al.(2017)Yu, Neely, and Wei]{yu2017online}
Hao Yu, Michael Neely, and Xiaohan Wei.
\newblock Online convex optimization with stochastic constraints.
\newblock In \emph{Advances in Neural Information Processing Systems}, pages
  1428--1438, 2017.

\bibitem[Yuan and Lamperski(2018)]{yuan2018online}
Jianjun Yuan and Andrew Lamperski.
\newblock Online convex optimization for cumulative constraints.
\newblock In \emph{Advances in Neural Information Processing Systems}, pages
  6140--6149, 2018.

\bibitem[Yurtsever et~al.(2015)Yurtsever, Dinh, and
  Cevher]{yurtsever2015universal}
Alp Yurtsever, Quoc~Tran Dinh, and Volkan Cevher.
\newblock A universal primal-dual convex optimization framework.
\newblock In \emph{Advances in Neural Information Processing Systems}, pages
  3150--3158, 2015.

\bibitem[Zinkevich(2003)]{Zinkevich03ICML}
Martin Zinkevich.
\newblock Online convex programming and generalized infinitesimal gradient
  ascent.
\newblock In \emph{Proceedings of International Conference on Machine Learning
  (ICML)}, 2003.

\end{thebibliography}

\newpage


\section{Supplement}

\subsection{The pushback property of Bregman divergences}\label{sec:property-divergence}
In this section, we prove the following key property of the Bregman divergence:
\begin{lemma}\label{lem:strong-convex-0}
Let $f:\mathcal{C}\rightarrow\mb{R}$ be a convex function. Fix $\alpha>0$, $y\in\Delta^o$. Suppose $x^*\in \text{argmin}_{x\in\Delta} f(x) + \alpha D(x, y)$ and $x^*\in\Delta^o$, then, for any $z\in\Delta$,
\[
f(x^*) + \alpha D(x^*,y)\leq f(z) + \alpha D(z,y) - \alpha D(z,x^*). 
\]
\end{lemma}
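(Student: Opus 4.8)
The plan is to establish the pushback inequality via the first-order optimality condition for the constrained minimization defining $x^*$, exploiting the fact that $x^* \in \Delta^o$ lies in the interior where $\omega$ (and hence $D(\cdot, y)$ in its first argument) is differentiable. The key objects are the objective $\phi(x) := f(x) + \alpha D(x, y)$ and its subgradient structure. Since $x^* \in \Delta^o = \Delta \cap \text{int}(\mathcal{C})$, the Bregman term $\alpha D(x, y) = \alpha(\omega(x) - \omega(y) - \langle \nabla\omega(y), x - y\rangle)$ is differentiable at $x^*$ with gradient $\alpha(\nabla\omega(x^*) - \nabla\omega(y))$.

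\textbf{First I would} write down the variational inequality for the minimizer. Because $x^*$ minimizes the convex function $\phi$ over the convex set $\Delta$, there exists a subgradient $\nabla f(x^*) \in \partial f(x^*)$ such that for every $z \in \Delta$,
\[
\langle \nabla f(x^*) + \alpha(\nabla\omega(x^*) - \nabla\omega(y)),\ z - x^* \rangle \geq 0.
\]
This is the standard first-order condition: the negative (sub)gradient of $\phi$ at $x^*$ lies in the normal cone to $\Delta$ at $x^*$. I would then convert this inner-product inequality into the desired statement by combining it with two convexity facts.

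\textbf{Next I would} use the convexity of $f$, namely $f(z) \geq f(x^*) + \langle \nabla f(x^*), z - x^*\rangle$, and the three-point identity for Bregman divergences,
\[
D(z, y) = D(z, x^*) + D(x^*, y) + \langle \nabla\omega(x^*) - \nabla\omega(y),\ z - x^* \rangle,
\]
which follows by direct expansion of the definition of $D$. Rearranging the three-point identity gives $\alpha\langle \nabla\omega(x^*) - \nabla\omega(y), z - x^*\rangle = \alpha(D(z,y) - D(z,x^*) - D(x^*,y))$. Substituting both this and the $f$-convexity bound into the optimality inequality, the cross terms involving $z - x^*$ cancel against each other, and after collecting terms one arrives at $f(x^*) + \alpha D(x^*, y) \leq f(z) + \alpha D(z, y) - \alpha D(z, x^*)$, as claimed.

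\textbf{The main obstacle} is justifying the first-order optimality condition rigorously on the constrained set $\Delta$ rather than on all of $\mathcal{C}$, while simultaneously handling the possible non-differentiability of $\omega$ on the boundary of $\Delta$. The hypothesis $x^* \in \Delta^o$ is precisely what rescues us: it places $x^*$ in the interior of $\mathcal{C}$ where $\nabla\omega$ exists, so that $D(\cdot, y)$ is genuinely differentiable at $x^*$ and the subgradient of $\phi$ splits cleanly as $\partial f(x^*) + \alpha(\nabla\omega(x^*) - \nabla\omega(y))$ without boundary pathologies. The care needed is to ensure that the sum rule for subgradients applies (here $f$ is convex and the Bregman term is smooth at $x^*$, so this is standard) and that the normal-cone characterization of the constrained minimizer is invoked with the correct sign convention; the remaining algebra is routine and the three-point identity does all the real bookkeeping.
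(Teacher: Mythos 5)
Your proposal has the same skeleton as the paper's proof: a first-order optimality (variational) inequality at $x^*$, combined with the three-point identity for $D$ and the subgradient inequality for $f$. Indeed, the paper's closing chain of computations is exactly your three-point identity in expanded form, and the algebra in your second step is correct.

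The gap is in how you justify the variational inequality. The paper's lemma does \emph{not} assume $\omega$ is convex: in Section 2.1, $\omega$ is only required to be continuously differentiable on $\text{int}(\mathcal{C})$ (convexity enters only later, when $\omega$ is additionally a distance generating function). Consequently $\phi(x) := f(x) + \alpha D(x,y)$ need not be convex, and your opening step ``because $x^*$ minimizes the \emph{convex} function $\phi$ over $\Delta$, its negative subgradient lies in the normal cone'' is not licensed in the lemma's stated generality; for non-convex $\phi$ the convex subdifferential $\partial\phi(x^*)$ can even be empty, and the normal-cone characterization of constrained minimizers is a theorem about convex objectives. This is precisely why the paper proves its Claim 1 from scratch: it compares $\phi(x^*)$ with $\phi((1-h)x^*+hz)$ along a segment, uses only the differentiability of $D(\cdot,y)$ at the interior point $x^*$ together with convexity of $f$ alone, and then extracts a limit of subgradients $\nabla f((1-h_k)x^*+h_k z)$ to obtain a subgradient at $x^*$; note also that the paper only needs (and only proves) the weaker, $z$-dependent quantifier ``for each $z$ there exists a subgradient,'' which is all the final algebra uses. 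Your argument is fine --- and cleaner --- under the additional, conventional hypothesis that $\omega$ is convex, which holds for every Bregman divergence actually used in the paper; but as a proof of the lemma as stated it needs either that hypothesis added, or the optimality condition rederived without it. A minimal repair along your own lines: from minimality along the segment one gets $f'(x^*;z-x^*) + \alpha\langle \nabla\omega(x^*)-\nabla\omega(y), z-x^*\rangle \geq 0$, where $f'(x^*;\cdot)$ is the directional derivative; then $f'(x^*;z-x^*) \leq f(z)-f(x^*)$ by convexity of $f$, and your three-point identity finishes the proof. This avoids both the convexity of $\phi$ and the paper's subsequence argument.
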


\begin{proof}[Proof of Lemma \ref{lem:strong-convex-0}]
First of all, we recall the following known facts about convex functions and their subgradients whose proofs can be found, for example, in \citep{bertsekas1999nonlinear}:
\begin{itemize}
\item The set $\partial f(x)$ is non-empty for any $x\in\text{int}(\mathcal C)$.
\item For any bounded subset $\mathcal{X}\subseteq\text{int}(\mathcal{C})$, the union $\cup_{x\in\mathcal{X}}\partial f(x)$ is bounded.
\end{itemize}

By definition of Bregman divergence, we have for any $x,~y\in\Delta^o$,
\[
D(x,y) = \omega(x) - \omega(y) - \dotp{\nabla\omega(y)}{x-y},
\]
and
\[
\nabla_{x}D(x,y) = \nabla\omega(x) - \nabla\omega(y).
\]
Now, we claim the following optimality condition:\\
\textbf{Claim 1:} For any $z\in\Delta$, there exists a $\nabla f(x^*)\in\partial f(x^*)$ such that following holds:
\[
\dotp{\nabla f(x^*) + \alpha\nabla\omega(x^*) - \alpha\nabla\omega(y) }{z - x^*}\geq0.
\]
\begin{proof}[Proof of Claim 1]
Fix a constant $h\in(0,1)$. Since $\Delta$ is a convex set, it follows $(1-h)x^*+h z\in\Delta$. Thus, by the fact that $x^*$ is a minimizer:
\begin{align*}
&f(x^*) + \alpha D(x^*,y) \\
\leq& f((1-h)x^*+h z) + \alpha D((1-h)x^*+h z,y)\\
=& f((1-h)x^*+h z) + \alpha \l(D(x^*,y) + \dotp{\nabla D(x^*,y)}{h(z-x^*)} + o(h)\r)\\
=& f((1-h)x^*+h z) + \alpha D(x^*,y)  +  \alpha\l( \dotp{\nabla \omega(x^*) - \nabla \omega (y)}{h(z-x^*)} + o(h) \r),
\end{align*}
where the first equality follows from the fact that $D(x,z)$ is continuously differentially on the first argument at $x=x^*$ with $o(h)$ representing a high order term such that $\lim_{h\rightarrow 0}o(h)/h= 0$, and the second equality follows from the definition of Bregman divergence. Canceling the common term $\alpha D(x^*,y)$ and rearranging the terms give
\begin{equation}\label{eq:sub-grad-bound}
\frac{f((1-h)x^*+h z) - f(x^*)}{h}\geq -\alpha \dotp{\nabla \omega(x^*) - \nabla \omega (y)}{z-x^*} - o(\alpha h)/h.
\end{equation}
Since $f$ is convex and $(1-h)x^*+h z\in \text{int}(\mathcal C),~\forall h<1$, we have for any $\nabla f((1-h)x^*+h z)\in\partial f((1-h)x^*+h z)$.
\begin{equation*}
f(x^*)\geq f((1-h)x^*+h z) + \dotp{\nabla f((1-h)x^*+h z)}{h(x^*-z)}.
\end{equation*}
Substituting this bound into \eqref{eq:sub-grad-bound} gives
\begin{equation}\label{eq:subgradient-2}
\dotp{\nabla f((1-h)x^*+h z)}{z-x^*}\geq -\alpha \dotp{\nabla \omega(x^*) - \nabla \omega (y)}{z-x^*} - o(\alpha h)/h.
\end{equation}
To this point, consider any sequence $\{h_k\}_{k\geq0}\subseteq(0,1)$ such that $\lim_{k\rightarrow\infty}h_k = 0$. By the aforementioned property of subgradient, we have the union $\cup_{k\geq0}\partial f((1-h_k)x^*+h_k z)$ is bounded. Thus, the sequence
$\{\nabla f((1-h_k)x^*+h_k z) \}_{k\geq0}$ is bounded, and there exists a subsequence $\{\nabla f((1-h_{k_{\ell}})x^*+h_{k_\ell} z) \}_{\ell\geq0}$ such that $\nabla f((1-h_{k_{\ell}})x^*+h_{k_\ell} z) \rightarrow d$. On the other hand, by definition of subgradient, we have for any $u\in\mathcal{C}$,
\[
f(u)\geq f((1-h_{k_{\ell}})x^*+h_{k_\ell} z) + \dotp{\nabla f((1-h_{k_{\ell}})x^*+h_{k_\ell} z)}{u - ((1-h_{k_{\ell}})x^*+h_{k_\ell} z)}.
\]
Taking the limit $\ell\rightarrow\infty$ gives
\[
f(u)\geq f(x^*) + \dotp{d}{u - x^*},
\]
where we use the fact that a convex function must be continuous on the interior point $x^*$ of $\mathcal{C}$. This implies that 
$d\in\partial f(x^*)$. Substituting $\{h_{k_\ell}\}_{\ell\geq0}$ into \eqref{eq:subgradient-2} and taking the limit finish the proof.
\end{proof}

Thus, by Claim 1, we have there exists a $\nabla f(x^*)$,
\begin{align*}
&\alpha (D(z,y) - D(z,x^*)) \\
=&\alpha\l(\omega(z) - \omega(y) - \dotp{\nabla\omega(y)}{z - y}\r) 
-\alpha\l(\omega(z) - \omega(x^*) - \dotp{\nabla\omega(x^*)}{z - x^*}\r) \\
=&\alpha\l( \omega(x^*) - \omega(y)  + \dotp{\nabla\omega(x^*) }{z - x^*} - \dotp{\nabla\omega(y)}{z - y}\r)\\
=&\alpha\l( \omega(x^*) - \omega(y)  + \dotp{\nabla f(x^*)/\alpha + \nabla\omega(x^*) - \nabla\omega(y)}{z - x^*} - \dotp{\nabla\omega(y)}{z - y}\r)\\
&-\dotp{\nabla f(x^*)}{z - x^*} + \alpha\dotp{\nabla\omega(y)}{z - x^*} \\
\geq&\alpha\l( \omega(x^*) - \omega(y)   - \dotp{\nabla\omega(y)}{x^* - y}\r)
- \dotp{\nabla f(x^*)}{z - x^*} \\
=& \alpha D(x^*,y) - \dotp{\nabla f(x^*)}{z - x^*}\\
\geq& \alpha D(x^*,y)  + f(x^*) - f(z),
\end{align*}
where third equality follows from adding and subtracting $\dotp{\nabla f(x^*)}{z - x^*} - \alpha\dotp{\nabla\omega(y)}{z - x^*}$, 
 the first inequality follows from the aforementioned optimality condition and the last inequality follows from convexity that $f(z)\geq f(x^*) + \dotp{\nabla f(x^*)}{z - x^*}$. Rearranging the terms yields the desired result.
\end{proof}

\subsection{SELM and constraint qualifications}\label{sec:constraint-qualification}
\subsubsection{Slater condition implies SELM}
The SELM assumption is actually implied by the Slater condition. More specifically, Slater condition considers the scenario where there is no equality constraint and there exists a $\mu\in\Delta$ such that $\overline{g}_i(\mu)< 0,~\forall i\in\{1,2,\cdots,L\}$. 
First of all, it is well-known that the Slater condition is
sufficient for the existence of a dual optimal solution (see, for example, \citep{bertsekas1999nonlinear}). Furthermore, the following lemma, which is essentially the same as Lemma 1 of \citep{nedic2009approximate}, implies that the set of dual optimal solutions is also bounded:
\begin{lemma}\label{lem:slater-selm}
Consider the convex program \eqref{eq:partial-static-prob} without equality constraints $\overline{\mathbf{h}}(\mu) =0$, and define the Lagrange dual function $q^{(t,k)}(\lambda)=\inf_{\mu\in\Delta}\l\{ \overline{f}^{(t,k)}(\mu) + \sum_{i=1}^m\lambda_i\overline{g}_i(\mu) \r\}$. Suppose there exists $\widetilde{\mu}\in\Delta$ such that $\overline g_i\l(\widetilde{\mu}\r)\leq-\varepsilon,~\forall i\in\{1,2,\cdots,L\}$ for some positive constant $\varepsilon>0$. Then, the level set 
$\mathcal{V}_{\bar\lambda}=\l\{ \lambda_1,\lambda_2,\cdots,\lambda_L\geq0,~ q^{(t,k)}(\lambda)\geq q^{(t,k)}(\bar\lambda)\r\}$ is bounded for any nonnegative $\bar\lambda$. Furthermore, we have
$\max_{\lambda\in\mathcal{V}_{\bar\lambda}}  \|\lambda\|_2\leq\varepsilon^{-1}\l(  \overline{f}^{(t,k)}(\widetilde{\mu})-q^{(t,k)}(\bar\lambda)   \r)$.
\end{lemma}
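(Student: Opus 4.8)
The plan is to exploit the variational (infimum) definition of the dual function together with the strict feasibility supplied by the Slater point $\widetilde{\mu}$. The crucial observation is that since $q^{(t,k)}(\lambda)$ is an infimum over $\mu\in\Delta$, evaluating the Lagrangian at the \emph{single} point $\widetilde{\mu}$ already produces an upper bound valid for every $\lambda\geq 0$. This turns a statement about minimizers into an elementary inequality, which is exactly what is needed to control $\|\lambda\|_2$.

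Concretely, I would first write, directly from the definition of $q^{(t,k)}$,
\[
q^{(t,k)}(\lambda)\;\leq\;\overline{f}^{(t,k)}(\widetilde{\mu}) + \sum_{i=1}^L\lambda_i\,\overline{g}_i(\widetilde{\mu}).
\]
Since each $\lambda_i\geq 0$ and the Slater hypothesis gives $\overline{g}_i(\widetilde{\mu})\leq -\varepsilon$, I would bound the sum by $-\varepsilon\sum_{i=1}^L\lambda_i = -\varepsilon\|\lambda\|_1\leq -\varepsilon\|\lambda\|_2$, using $\|\lambda\|_1\geq\|\lambda\|_2$ for the nonnegative vector $\lambda$. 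This yields the clean estimate
\[
q^{(t,k)}(\lambda)\;\leq\;\overline{f}^{(t,k)}(\widetilde{\mu}) - \varepsilon\|\lambda\|_2 .
\]

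Next, for any $\lambda\in\mathcal{V}_{\bar\lambda}$ the defining inequality $q^{(t,k)}(\lambda)\geq q^{(t,k)}(\bar\lambda)$ lets me chain $q^{(t,k)}(\bar\lambda)\leq \overline{f}^{(t,k)}(\widetilde{\mu}) - \varepsilon\|\lambda\|_2$, and rearranging gives $\|\lambda\|_2\leq \varepsilon^{-1}\big(\overline{f}^{(t,k)}(\widetilde{\mu}) - q^{(t,k)}(\bar\lambda)\big)$. As this holds for every element of the level set, taking the supremum over $\mathcal{V}_{\bar\lambda}$ delivers both the boundedness assertion and the explicit bound at once.

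There is essentially no serious obstacle here: the whole argument is a one-line consequence of substituting the Slater point into the infimum. The only points needing a word of care are (i) noting that $\overline{f}^{(t,k)}(\widetilde{\mu})$ is finite, so the bound is meaningful, which follows from the boundedness of the objective in Assumption~\ref{as:basic}, and (ii) observing that $q^{(t,k)}(\bar\lambda)$ is finite, which is already implied by the upper bound just derived. I would also remark that the claim is trivially true when $\mathcal{V}_{\bar\lambda}$ is empty, so the content lies in the non-empty case.
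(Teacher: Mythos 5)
Your proof is correct and is exactly the standard argument for this statement: the paper does not prove the lemma itself but cites Lemma 1 of \citet{nedic2009approximate}, whose proof is precisely your chain $q^{(t,k)}(\bar\lambda)\le q^{(t,k)}(\lambda)\le \overline{f}^{(t,k)}(\widetilde{\mu})+\sum_{i=1}^L\lambda_i\overline{g}_i(\widetilde{\mu})\le \overline{f}^{(t,k)}(\widetilde{\mu})-\varepsilon\|\lambda\|_2$ followed by rearrangement, using $\|\lambda\|_2\le\|\lambda\|_1$ for $\lambda\ge 0$. One small correction to your closing remark: the derived upper bound only rules out $q^{(t,k)}(\bar\lambda)=+\infty$, whereas the direction actually needed for the bound to be non-vacuous is $q^{(t,k)}(\bar\lambda)>-\infty$, which follows from Assumption \ref{as:basic} (boundedness of $\overline{f}^{(t,k)}$ and $\overline{g}_i$ on the compact set $\Delta$).
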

Note that since $|f^t(\mu)|$ is bounded by some constant $F>0$ as stated in Assumption \ref{as:basic}. Taking 
$\bar\lambda = \lambda^*$ for any optimal dual solution $\lambda^*$, and notice that 
$\overline{f}^{(t,k)}(\widetilde{\mu})\leq F$, $q^{(t,k)}(\lambda^*)\geq\min_{\mu\in\Delta}\overline{f}^{(t,k)}(\mu)\geq-F$,
the above lemma readily implies
$\max_{\lambda\in\mathcal{V}^*}\|\lambda\|_2\leq 2F/\varepsilon$. Thus, Slater condition implies the existence of Lagrange multiplier condition.

\subsubsection{SELM is equivalent to Mangasarian-Fromovitz constraint qualification (MFCQ)}
In this section, we show SELM is able to handle general equality constraints and thus strictly weaker than the Slater condition.
In 1977, J. Gauvin \citep{gauvin1977necessary} observed that for any constrained convex program,
where both the objective and constraint functions are continuously differentiable,
the Mangasarian-Fromovitz constraint qualification (MFCQ) condition is in fact equivalent to the boundedness of the set of Lagrange multipliers.\footnote{In fact, MFCQ does not require convexity of the constrained programs. Thus, the result in \citep{gauvin1977necessary} even applies to non-convex programs.} More specifically, MFCQ 
 is defined as follows:

\begin{definition}[Mangasarian-Fromovitz constraint qualification (MFCQ)]
Consider a convex program:
\begin{equation}\label{eq:general-program}
\min_{x\in\mathbb{R}^d}~f(x),~~s.t. ~~g_i(x)\leq 0,~i\in\{1,2,\cdots,L\},~~\dotp{ h_j}{ x}=b_j,
~j\in\{1,2,\cdots,M\}.
\end{equation} 
It satisfies MFCQ if (a) The vectors $\{ h_j\}_{j=1}^M$ are linearly independent. (b) For a solution $ x^*$ to the above program, there exists some ${y}\in\mathbb{R}^d$ such that $\dotp{\nabla g_i( x^*)}{y}<0,~\forall i\in I( x^*)$, where 
$I( x^*) = \{i~| ~g_i(x^*) = 0\}$. 
\end{definition}
\begin{theorem}[\citep{gauvin1977necessary}]\label{theorem:mfcq}
Consider the Karush-Kuhn-Tucker(KKT) set of the program \eqref{eq:general-program}, which is the set $K( x^*)$ of vectors 
$(\lambda,\eta)\in\mathbb{R}^L_+\times \mathbb{R}^M$ such that the following set of equations holds:
\begin{align*}
-\nabla f( x^*) = \sum_{i=1}^L\lambda_i\nabla g_i( x^*) + \sum_{j=1}^M\eta_j \mf h_j,~\lambda\geq0,
~\lambda_ig_i(x^*) = 0,~\forall i\in\{1,2,\cdots,M\}.
\end{align*}
Then, the set $K( x^*)$ is non-empty and bounded if and only if MFCQ is satisfied for \eqref{eq:general-program}.
\end{theorem}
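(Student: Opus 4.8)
The plan is to treat $K(x^*)$ as a polyhedron and to decouple the two assertions — non-emptiness and boundedness — since they are governed by genuinely different mechanisms. First I would use complementary slackness $\lambda_i g_i(x^*)=0$ to eliminate the inactive multipliers, forcing $\lambda_i=0$ whenever $g_i(x^*)<0$. This reduces $K(x^*)$ to the set of $(\lambda,\eta)$ with $\lambda\geq 0$, $\lambda_i=0$ for $i\notin I(x^*)$, satisfying the single affine stationarity equation $-\nabla f(x^*)=\sum_{i\in I(x^*)}\lambda_i\nabla g_i(x^*)+\sum_j\eta_j h_j$. Being the intersection of an affine variety with sign constraints, $K(x^*)$ is a polyhedron, so — provided it is nonempty — its boundedness is equivalent to triviality of its recession cone
\[
\mathcal R := \Big\{(\lambda,\eta):\ \lambda\geq 0,\ \lambda_i=0\ (i\notin I(x^*)),\ \sum_{i\in I(x^*)}\lambda_i\nabla g_i(x^*)+\sum_j\eta_j h_j=0\Big\}.
\]

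For the boundedness half I would prove $\mathcal R=\{0\}$ if and only if MFCQ holds. Assuming MFCQ with a witness direction $y$ (taken, as is standard for equality-constrained problems, in the common null space $\dotp{h_j}{y}=0$) satisfying $\dotp{\nabla g_i(x^*)}{y}<0$ for $i\in I(x^*)$, I pair any $(\lambda,\eta)\in\mathcal R$ with $y$: the equality terms vanish and I am left with $\sum_i\lambda_i\dotp{\nabla g_i(x^*)}{y}=0$, a sum of nonpositive terms, which forces $\lambda_i=0$ on $I(x^*)$; then $\sum_j\eta_j h_j=0$ and linear independence of $\{h_j\}$ yields $\eta=0$, so $\mathcal R=\{0\}$. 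Conversely, if $\mathcal R=\{0\}$ then setting $\lambda=0$ immediately gives linear independence of $\{h_j\}$ (part (a)), while the existence of the MFCQ direction $y$ (part (b)) is exactly the dual alternative: by Motzkin's transposition theorem exactly one of (i) $\exists y$ with $\dotp{\nabla g_i(x^*)}{y}<0$ for all $i\in I(x^*)$ and $\dotp{h_j}{y}=0$ for all $j$, or (ii) $\exists(\lambda,\eta)$ with $\lambda\geq 0$, $\lambda\neq 0$ on $I(x^*)$, solving the homogeneous stationarity equation, can hold; triviality of $\mathcal R$ rules out (ii), hence (i) holds.

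For non-emptiness I would argue that under MFCQ the KKT conditions are necessary at the minimizer $x^*$. Since MFCQ is a constraint qualification, the standard convex-program KKT theorem — obtained by separating $-\nabla f(x^*)$ from the cone of feasible descent directions via a Farkas/separating-hyperplane argument — produces multipliers in $K(x^*)$, giving non-emptiness. In the reverse implication non-emptiness is a hypothesis, and it is precisely what licenses the step "bounded $\Rightarrow$ recession cone trivial" (an empty polyhedron admits no recession directions yet need not satisfy MFCQ). Assembling the pieces: MFCQ delivers both non-emptiness and $\mathcal R=\{0\}$, hence a nonempty bounded set; and a nonempty bounded $K(x^*)$ forces $\mathcal R=\{0\}$, hence MFCQ.

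I expect the delicate point to be the theorem-of-alternatives passage, where the precise mixed form of Motzkin's/Gordan's theorem (strict inequalities alongside equalities) must be invoked so that the negation of (ii) is genuinely statement (i); the active-set bookkeeping and the handling of the equality-gradient span are where errors tend to creep in. A secondary subtlety is keeping the equivalence "bounded $\iff$ recession trivial" confined to the nonempty case, so that non-emptiness and boundedness remain logically separated rather than silently conflated.
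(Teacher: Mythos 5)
The paper does not actually prove this statement: Theorem \ref{theorem:mfcq} is quoted from \citep{gauvin1977necessary} and used as a black box, so there is no internal argument to compare yours against; your proposal has to stand on its own. On that basis it is correct, and it is essentially the standard proof of Gauvin's theorem. Complementary slackness reduces $K(x^*)$ to a polyhedron in the active multipliers; for a \emph{nonempty} polyhedron, boundedness is equivalent to triviality of the recession cone $\mathcal{R}$; pairing a recession direction with the MFCQ vector $y$ forces the active $\lambda_i$ to vanish, and linear independence of the $h_j$ then forces $\eta=0$; conversely, the $\lambda=0$ slice of $\mathcal{R}=\{0\}$ gives linear independence, and Motzkin's transposition theorem (in the mixed strict-inequality/equality form, which you invoke correctly) converts triviality of $\mathcal{R}$ into existence of the strict-descent direction. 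Non-emptiness under MFCQ is the standard KKT-necessity theorem, and your insistence on keeping ``bounded $\iff$ trivial recession cone'' confined to the nonempty case is exactly the right bookkeeping.

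One point deserves emphasis because it is a correction to the paper rather than to you: the paper's stated Definition of MFCQ omits the requirement $\dotp{h_j}{y}=0$ for all $j$, which you silently restore (``taken, as is standard, in the common null space''). This restoration is not cosmetic; without it the theorem is false. Take $d=1$, $f(x)=x^2$, $g_1(x)=x\le 0$, $h_1=1$, $b_1=0$, so that $x^*=0$ and $\nabla f(x^*)=0$: the paper's conditions (a) and (b) hold with $y=-1$, yet the KKT set $\{(\lambda_1,\eta_1):\lambda_1\ge 0,\ \lambda_1+\eta_1=0\}$ is an unbounded ray. With the standard MFCQ (the one Gauvin proves, and the one you use), condition (b) fails in this example, consistent with the unbounded multiplier set. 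So your proof establishes the theorem as Gauvin states it; the paper's transcription of MFCQ should be read with the null-space condition on $y$ added.
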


Note that compared to \eqref{eq:general-program} our program \eqref{eq:partial-static-prob} has an extra set constraint 
$\mu\in\Delta$. The good news is that for the case where $\Delta$ is a probability simplex, i.e. it can be written explicitly as $\{\mu\in\mathbb R^d:~\mu_i\geq0,~\forall i,~\sum_{i=1}^d\mu_i=1\}$, 
applying Theorem \ref{theorem:mfcq}, we have the following lemma whose proof is delayed to Section \ref{sec:supp-lemma-proof}:
\begin{lemma}\label{cor:mfcq}
Consider the optimization problem \eqref{eq:partial-static-prob} for any time slot $t$ and any time period $k$ where $\Delta$ is the probability simplex. 
Suppose (a) The vectors $\{\mathbf{1},~\expect{h^t_1},~\expect{h^t_2},~\cdots,~\expect{h^t_M}\}$ are linearly independent. (b) There exists a solution to \eqref{eq:partial-static-prob}, denoted as $\mu^*$, and a vector $ y\in\mathbb{R}^d$ such that 
$\dotp{\nabla \overline g_i(\mu^*)}{ y}<0,~\forall i\in I(\mu^*)$, where 
$I(\mu^*) = \{i~| ~\overline{g}_i(\mu^*) = 0\}$. Then, the set of Lagrange multipliers $\mathcal{V}^* := \text{argmax}_{\lambda\in\mathbb{R}^L_+,~\eta\in\mathbb{R}^M}q^{(t,k)}(\lambda,\eta)$, where $q^{(t,k)}$ is defined in \eqref{eq:target-dual}, is non-empty and bounded.
\end{lemma}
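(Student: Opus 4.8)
The plan is to eliminate the implicit set constraint $\mu\in\Delta$ by writing it out explicitly and then to invoke Gauvin's characterization (Theorem \ref{theorem:mfcq}) verbatim. Since the probability simplex is $\Delta=\{\mu\in\mathbb{R}^d:~\mu_i\geq0~\forall i,~\dotp{\mathbf{1}}{\mu}=1\}$, I would recast \eqref{eq:partial-static-prob} as a single instance of the standard program \eqref{eq:general-program} over all of $\mathbb{R}^d$: keep $\overline{f}^{t,k}$ as the objective and the $L$ constraints $\overline{g}_i(\mu)\leq0$, adjoin the $d$ nonnegativity inequalities $-\mu_i\leq0$, keep the $M$ equalities $\dotp{\expect{h^t_j}}{\mu}=b_j$, and adjoin the normalization equality $\dotp{\mathbf{1}}{\mu}=1$. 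With this reformulation the lemma reduces to verifying MFCQ for the augmented constraint system at the given solution $\mu^*$; Theorem \ref{theorem:mfcq} then yields that the KKT set $K(\mu^*)$ is non-empty and bounded. Finally, because the program is convex, $K(\mu^*)$ coincides with the set of maximizers of the Lagrangian dual function \eqref{eq:target-dual}, i.e. with $\mathcal{V}^*$, and this standard convex-duality identification is exactly what converts Gauvin's conclusion into the asserted statement about $\mathcal{V}^*$. Verifying part (a) of MFCQ is then immediate: the equality-constraint gradients of the augmented system are precisely $\{\expect{h^t_1},\dots,\expect{h^t_M},\mathbf{1}\}$, whose linear independence is hypothesis (a).

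The substance of the argument lies in part (b), constructing one direction that strictly descends every active inequality while respecting the equalities. Write $I(\mu^*)=\{i:\overline{g}_i(\mu^*)=0\}$ and $Z(\mu^*)=\{i:\mu^*_i=0\}$ for the active sets of the two families of inequalities (with gradients $\nabla\overline{g}_i(\mu^*)$ and $-e_i$, respectively). I need a vector $v$ with $\dotp{\nabla\overline{g}_i(\mu^*)}{v}<0$ for $i\in I(\mu^*)$, with $v_i>0$ for $i\in Z(\mu^*)$, and — as the standard form of MFCQ used by Gauvin requires — tangent to the equalities, i.e. $\dotp{\mathbf{1}}{v}=0$ and $\dotp{\expect{h^t_j}}{v}=0$ for every $j$. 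I would assemble $v$ from three ingredients: the direction $y$ furnished by hypothesis (b), which already takes care of $I(\mu^*)$; the direction $p=\frac1d\mathbf{1}-\mu^*$ pointing from $\mu^*$ into the relative interior of the simplex, which satisfies $\dotp{\mathbf{1}}{p}=0$ and $p_i=\frac1d>0$ for all $i\in Z(\mu^*)$; and a final projection onto the common null space of $\{\mathbf{1},\expect{h^t_1},\dots,\expect{h^t_M}\}$ to restore tangency, which is well defined precisely because of the linear independence in (a). Mixing $y$ and $p$ with small enough weights before projecting should keep all the open (strict) conditions intact.

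The main obstacle is exactly this assembly of the MFCQ direction. The three requirements compete: strict decrease of the active stochastic constraints is controlled only by $y$, strict interiority on the active simplex faces is controlled only by $p$, and the tangency projection can perturb both the sign pattern on $Z(\mu^*)$ and the inner products with $\nabla\overline{g}_i(\mu^*)$. Showing that a single choice of mixing weights makes all of these strict inequalities survive simultaneously — and that hypotheses (a) and (b) are jointly enough to guarantee it — is the only genuinely delicate step; the reduction, the verification of (a), and the passage from $K(\mu^*)$ to $\mathcal{V}^*$ are all routine.
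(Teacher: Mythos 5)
Your route is the same as the paper's: write the simplex out as explicit constraints, apply Gauvin's Theorem \ref{theorem:mfcq} to the augmented program \eqref{eq:general-program}, and identify the resulting KKT set $K(\mu^*)$ with $\mathcal{V}^*$. The genuine gap is precisely the step you flag as ``delicate'' and leave open: producing one direction $v$ that strictly decreases every active $\overline{g}_i$, is strictly positive on the active faces $Z(\mu^*)=\{i:\mu^*_i=0\}$, and is tangent to the equalities. No choice of mixing weights and projection can close this, because hypotheses (a) and (b) simply do not imply such a direction exists --- indeed the lemma's conclusion can fail under them. Take $d=2$, $M=0$, $L=1$, $\overline{f}^{t,k}(\mu)=\mu_1$ and $\overline{g}_1(\mu)=\mu_2$. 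The unique feasible point is $\mu^*=(1,0)$; hypothesis (a) is trivial ($\{\mathbf{1}\}$ alone), and hypothesis (b) holds with $y=(0,-1)$ since $\nabla\overline{g}_1(\mu^*)=e_2$. Yet $q^{(t,k)}(\lambda)=\min_{\mu\in\Delta}(\mu_1+\lambda\mu_2)=\min\{1,\lambda\}$, so $\mathcal{V}^*=[1,\infty)$ is unbounded. The conflict you worried about is exactly realized here: $\nabla\overline{g}_1(\mu^*)=e_2$ while $2\in Z(\mu^*)$, so the requirements $\dotp{\nabla\overline{g}_1(\mu^*)}{v}<0$ and $v_2>0$ flatly contradict each other. (The paper's own write-up is silent at this same spot --- it asserts that the lemma's hypotheses yield MFCQ for the augmented program without constructing the direction --- so your instinct about where the difficulty sits is correct; but the repair is not a cleverer construction, it is a stronger hypothesis, e.g.\ requiring $y$ to also satisfy $y_i>0$ for $i\in Z(\mu^*)$ and tangency to $\mathbf{1}$ and the $\expect{h^t_j}$, which is MFCQ of the augmented program outright.)

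A secondary issue: you wave off the passage from $K(\mu^*)$ to $\mathcal{V}^*$ as ``standard convex-duality identification,'' but that is where the paper spends nearly all of its effort, and it is not free. $\mathcal{V}^*$ is the argmax of the \emph{partially} dualized function \eqref{eq:target-dual}, in which $\mu\in\Delta$ stays implicit, while $K(\mu^*)$ carries extra multipliers $(\mathbf{u},v)$ for the simplex constraints. The inclusion of the projection of $K(\mu^*)$ into $\mathcal{V}^*$ does follow from zero duality gap and weak duality; but to deduce boundedness of $\mathcal{V}^*$ from boundedness of $K(\mu^*)$ you need the reverse inclusion --- every $(\lambda^*,\eta^*)\in\mathcal{V}^*$ must extend to a full KKT vector --- and the paper proves this by fixing $(\lambda^*,\eta^*)$, passing to the inner problem \eqref{eq:inter-opt-1}, verifying complementary slackness, and invoking Theorem \ref{theorem:mfcq} a second time on that inner problem. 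A proof that omits this direction leaves open the possibility that $\mathcal{V}^*$ contains multipliers corresponding to no KKT point, which is exactly what happens in the counterexample above.
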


\begin{remark}
In the case where there is no inequality constraints in \eqref{eq:partial-static-prob}, lemma \ref{cor:mfcq} gives a simple objective-irrelevant equivalence condition of SELM that $\{\mathbf{1},~\expect{h^t_1},~\expect{h^t_2},~\cdots,~\expect{h^t_M}\}$ are linearly independent, which could be useful for online linear program. 
\end{remark}

For general scenarios where $\Delta$ is just an arbitrary abstract convex set, we have the following definition of generalized MFCQ following \citep{nguyen1980conditions}. First, we have the definitions of normal cones and tangent cones:
\begin{definition}[Normal cone]
Consider any set $S\subseteq\mb R^d$, the normal cone of $S$ at any $x\in S$ is
\[
N(S,x) := \{g\in\mb R^d:~\dotp{g}{x-y}\geq0,~\forall y\in \mb R^d\}.
\]
\end{definition}
Note that normal cone at $x\in S$ is the subgradient of the indicator function of $S$, namely $I_S(x)$. To see this, consider any $y\in \mb R^d$, then, we have $g$ is a subgradient of $I_S(x)$ at $x$ if
\[
I_S(y)\geq I_S(x) + \dotp{g}{y-x},~~\forall y\in\mb R^d.
\]
Note that if $y\not\in S$, then $I_S(y)=+\infty$, otherwise, $I_S(y) = I_S(x) = 0$. Thus,  $\dotp{g}{x-y}\geq0$.

\begin{definition}[Tangent cone]
Consider any set $S\subseteq\mb R^d$, the tangent cone of $S$ at any $x\in S$ is
\[
T(S,x) := \text{cone}(S - x) = \{\lambda d:~\lambda\geq0,~d\in S-x\},
\]
and $S-x = \{y\in\mb R^d, y = z-x,~ \exists z\in S\}$.
\end{definition}

\begin{definition}[Generalized MFCQ]
Consider a convex program:
\begin{equation}\label{eq:general-program-2}
\min_{\mf x\in S}~f(\mf x),~~s.t. ~~g_i(\mf x)\leq 0,~i\in\{1,2,\cdots,L\},~~\dotp{\mf h_j}{\mf x}=b_j,
~j\in\{1,2,\cdots,M\}.
\end{equation} 
It satisfies the generalized MFCQ if (a) The vectors $\{\mf h_j\}_{j=1}^M$ are linearly independent. (b) For a solution $\mf x^*$ to the above program, there exists some $y\in\text{int}(T(S,x^*))$ such that $\dotp{\nabla g_i( x^*)}{ y}<0,~\forall i\in I( x^*)$ and any subgradient $\nabla g_i( x^*)$, where $I( x^*) = \{i~| ~g_i(\mf x^*) = 0\}$ and $\text{int}(T(S,x^*))$ denotes the interior of $T(S,x^*)$. 
\end{definition}
Note that this definition requires the interior of $T(S,x^*)$ to be non-empty, which \textit{does not} work for the case where $S$ is a probability simplex. This is why we have a separate lemma (Lemma \ref{cor:mfcq}). When assuming the interior of $T(S,x^*)$ is non-empty, we have the following theorem:
\begin{theorem}[\citep{nguyen1980conditions}]
Consider the Karush-Kuhn-Tucker(KKT) set of the program \eqref{eq:general-program-2}, which is the set $K( x^*)$ of vectors 
$(\lambda,\eta)\in\mathbb{R}^L_+\times \mathbb{R}^M$ such that the following set of equations holds:
\begin{align*}
0\in \partial f( x^*) + \sum_{i=1}^L\lambda_i\nabla g_i( x^*) + \sum_{j=1}^M\eta_j \mf h_j + N(S,x^*),~\lambda\geq0,
~\lambda_ig_i(x^*) = 0,~\forall i\in\{1,2,\cdots,M\}.
\end{align*}
Then, the set $K( x^*)$ is non-empty and bounded if and only if  \eqref{eq:general-program} satisfies the generalized  MFCQ.
\end{theorem}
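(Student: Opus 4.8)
The plan is to establish the two implications of the equivalence separately, using Gauvin's finite-dimensional result (Theorem \ref{theorem:mfcq}) as a prototype and treating the abstract set constraint $\mf x\in S$ through the polarity between the tangent cone $T(S,x^*)$ and the normal cone $N(S,x^*)$; recall that, with the corrected reading $N(S,x^*)=\{g:\dotp{g}{z-x^*}\leq0,~\forall z\in S\}$, one has $N(S,x^*)=T(S,x^*)^{\circ}$, and the stationarity inclusion in the KKT set simply replaces the equation $-\nabla f(x^*)=\sum_i\lambda_i\nabla g_i(x^*)+\sum_j\eta_j\mf h_j$ of the no-set case by an inclusion allowing an extra normal-cone term. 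Throughout I write $v(\lambda,\eta):=\sum_{i}\lambda_i\nabla g_i(x^*)+\sum_j\eta_j\mf h_j$.

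For the direction that generalized MFCQ implies $K(x^*)$ is nonempty and bounded, I would first obtain \textbf{nonemptiness} by a theorem-of-the-alternative argument: the strict feasible direction $y\in\text{int}(T(S,x^*))$ with $\dotp{\nabla g_i(x^*)}{y}<0$ on $I(x^*)$ (and $\dotp{\mf h_j}{y}=0$, the equality part implicit in condition (b)) rules out the inconsistent branch of a generalized Motzkin transposition applied to the active gradients together with the cone $N(S,x^*)$, yielding multipliers $(\lambda,\eta)$ satisfying the inclusion and complementary slackness. The crucial and more delicate part is \textbf{boundedness}, which I would prove by contradiction. Assuming $K(x^*)$ unbounded, pick $(\lambda^n,\eta^n)\in K(x^*)$ with $\|(\lambda^n,\eta^n)\|_2\to\infty$, normalize, and pass to a subsequential limit $(\bar\lambda,\bar\eta)$ with $\|(\bar\lambda,\bar\eta)\|_2=1$. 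Dividing the inclusion $0\in\partial f(x^*)+v(\lambda^n,\eta^n)+N(S,x^*)$ by $\|(\lambda^n,\eta^n)\|_2$ and letting $n\to\infty$, the bounded subgradient term disappears and, because $N(S,x^*)$ is a closed cone, the limit stays in it; hence $-v(\bar\lambda,\bar\eta)\in N(S,x^*)$ with $\bar\lambda\geq0$ supported on $I(x^*)$ by complementary slackness. Pairing with the MFCQ direction $y$ and using $N(S,x^*)=T(S,x^*)^{\circ}$ gives $\dotp{-v(\bar\lambda,\bar\eta)}{y}\leq0$, that is $\sum_i\bar\lambda_i\dotp{\nabla g_i(x^*)}{y}+\sum_j\bar\eta_j\dotp{\mf h_j}{y}\geq0$; since each term in the first sum is $\leq0$ (strictly negative on active $i$) and the equality terms vanish, this forces $\bar\lambda=0$, after which linear independence of $\{\mf h_j\}$ forces $\bar\eta=0$, contradicting $\|(\bar\lambda,\bar\eta)\|_2=1$.

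For the converse I would argue the contrapositive: if generalized MFCQ fails then $K(x^*)$, whenever nonempty, is unbounded. If condition (a) fails, a nontrivial dependence $\sum_j\bar\eta_j\mf h_j=0$ produces a direction $(0,\bar\eta)$ that can be added to any KKT point to generate an entire line inside $K(x^*)$. If condition (b) fails, so that no $y\in\text{int}(T(S,x^*))$ strictly decreases all active inequalities, a separation argument yields a nonzero $(\bar\lambda,\bar\eta)$ with $\bar\lambda\geq0$ on $I(x^*)$ and $-v(\bar\lambda,\bar\eta)\in N(S,x^*)$, which is precisely a recession direction of $K(x^*)$; adding positive multiples of it to a KKT point again exhibits unboundedness.

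The step I expect to be the main obstacle is the boundedness half of the forward direction, where two points need genuine care: justifying the limit interchange with the closed cone $N(S,x^*)$ (so that the normalized normal-cone remainders converge into $N(S,x^*)$ and do not escape it in the limit), and correctly separating the roles of the inequality block and the equality block of $y$. It is here that the interior assumption $\text{int}(T(S,x^*))\neq\emptyset$ is indispensable, which is exactly why the probability-simplex case, where this interior is empty, must be handled by the separate Lemma \ref{cor:mfcq} rather than by this theorem.
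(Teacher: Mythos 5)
The first thing to say is that the paper contains no proof of this statement at all: it is quoted as a known result from \citep{nguyen1980conditions}, exactly as Theorem \ref{theorem:mfcq} is quoted from Gauvin, so your attempt can only be assessed on its own merits rather than against an internal argument. Your overall architecture (normalize an unbounded sequence of multipliers, pass to a recession direction, kill it with the MFCQ direction; contrapositive with explicit recession directions for the converse) is the classical one and is the right skeleton. Moreover, your two corrective readings of the paper's statement are not merely reasonable but \emph{necessary} for the theorem to be true: the normal cone as printed (quantified over $y\in\mathbb{R}^d$) collapses to $\{0\}$ and must be read over $y\in S$; and condition (b) must implicitly require $\dotp{\mf h_j}{y}=0$, since otherwise the theorem is false --- take $S=\mathbb{R}^d$, $f(x)=\dotp{-h_1}{x}$, constraints $\dotp{h_1}{x}\leq 0$ and $\dotp{h_1}{x}=0$ with $h_1\neq 0$: the paper's MFCQ holds with $y=-h_1$, yet $K(0)=\{(\lambda,1-\lambda):\lambda\geq 0\}$ is unbounded.

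There is, however, a genuine gap in your boundedness argument, precisely at the step you wave through. After concluding $\bar\lambda=0$, what you possess is the inclusion $-\sum_j\bar\eta_j\mf h_j\in N(S,x^*)$, \emph{not} the equation $\sum_j\bar\eta_j\mf h_j=0$; linear independence of $\{\mf h_j\}$ alone therefore cannot force $\bar\eta=0$. Indeed a nonzero combination of linearly independent vectors can easily lie in $-N(S,x^*)$ when $S\neq\mathbb{R}^d$: with $S=\mathbb{R}^d_+$, $x^*=0$, $\mf h_1=e_1$, one has $-\bar\eta_1 e_1\in N(S,0)$ for every $\bar\eta_1\geq0$. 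The missing step is a second use of the interior direction: set $u=-\sum_j\bar\eta_j\mf h_j\in T(S,x^*)^{\circ}$; the equality part of MFCQ gives $\dotp{u}{y}=0$, and since some ball $y+\delta B\subseteq T(S,x^*)$, polarity gives $\delta\dotp{u}{w}\leq-\dotp{u}{y}=0$ for every unit vector $w$, hence $u=0$, and only then does linear independence yield $\bar\eta=0$. Separately, be aware that the two places you delegate to ``a theorem-of-the-alternative/separation argument'' --- nonemptiness of $K(x^*)$ under generalized MFCQ, and extraction of a recession direction when (b) fails --- are exactly where the substance of the cited theorem lives (one must separate using $\text{int}(T(S,x^*))\neq\emptyset$, handle entire subdifferentials $\partial g_i(x^*)$ rather than single gradients, and use convexity of $\partial g_i(x^*)$ so that the combined multipliers $\lambda_i\xi_i+t\bar\lambda_i\xi_i'$ can be rewritten as $(\lambda_i+t\bar\lambda_i)\xi_i''$ with $\xi_i''\in\partial g_i(x^*)$); as written those passages are placeholders rather than proofs.
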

Applying the above theorem to \eqref{eq:partial-static-prob} with $S=\Delta$, we readily get the equivalence condition for the existence and boundedness of Lagrange multipliers for \eqref{eq:partial-static-prob} as follows
\begin{corollary}\label{cor:mfcq-2}
Consider the optimization problem \eqref{eq:partial-static-prob} for any time slot $t$ and any time period $k$ where $\Delta$ has an nonempty interior. 
Suppose (a) The vectors $\{\expect{h^t_1},~\expect{h^t_2},~\cdots,~\expect{h^t_M}\}$ are linearly independent. (b) There exists a solution to \eqref{eq:partial-static-prob}, denoted as $\mu^*$, and a vector $ y\in \text{int}(T(\Delta,\mu^*))$ such that 
$\dotp{\nabla \overline g_i(\mu^*)}{ y}<0,~\forall i\in I(\mu^*)$, where 
$I(\mu^*) = \{i~| ~\overline{g}_i(\mu^*) = 0\}$. Then, the set of Lagrange multipliers $\mathcal{V}^* := \text{argmax}_{\lambda\in\mathbb{R}^L_+,~\eta\in\mathbb{R}^M}q^{(t,k)}(\lambda,\eta)$, where $q^{(t,k)}$ is defined in \eqref{eq:target-dual}, is non-empty and bounded.
\end{corollary}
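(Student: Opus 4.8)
The plan is to recognize \eqref{eq:partial-static-prob} (with $\Delta$ playing the role of the abstract set $S$) as an instance of the generalized convex program \eqref{eq:general-program-2}, and then to invoke the theorem of \citep{nguyen1980conditions} verbatim. Concretely, I identify the objective with $\overline{f}^{t,k}$, the inequality constraints with $\overline{g}_i(\mu)\le0$, the equality-constraint vectors with $\mf h_j=\expect{h_j^t}$, and the abstract set with $S=\Delta$. The first task is to check the standing hypothesis of that theorem, namely $\text{int}(T(\Delta,\mu^*))\neq\emptyset$: since $\Delta$ is assumed to have nonempty interior and $T(\Delta,\mu^*)=\text{cone}(\Delta-\mu^*)\supseteq\Delta-\mu^*$, the tangent cone contains a translate of $\Delta$ and therefore has nonempty interior (this is precisely why the corollary must assume $\text{int}(\Delta)\neq\emptyset$, in contrast to the probability-simplex case of Lemma \ref{cor:mfcq}). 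With this in place, conditions (a) and (b) of the corollary are, by construction, exactly the two clauses of the generalized MFCQ for this program, so the theorem applies and yields that the KKT multiplier set $K(\mu^*)$ is nonempty and bounded.

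The remaining, and main, step is to identify the KKT multiplier set $K(\mu^*)$ with the dual optimal set $\mathcal{V}^*=\text{argmax}_{\lambda\ge0,\eta}q^{(t,k)}(\lambda,\eta)$, so that nonemptiness and boundedness transfer from $K(\mu^*)$ to $\mathcal{V}^*$. I would do this through the standard convex-duality correspondence. First, nonemptiness of $K(\mu^*)$ already buys strong duality: if $(\lambda,\eta)\in K(\mu^*)$, then by convexity the stationarity inclusion means $\mu^*$ minimizes the Lagrangian $L(\cdot,\lambda,\eta)=\overline{f}^{t,k}(\cdot)+\sum_i\lambda_i\overline{g}_i(\cdot)+\sum_j\eta_j(\overline{h}_j(\cdot)-b_j)$ over $\Delta$, and complementary slackness together with equality feasibility give $L(\mu^*,\lambda,\eta)=\overline{f}^{t,k}(\mu^*)$; hence $q^{(t,k)}(\lambda,\eta)=\overline{f}^{t,k}(\mu^*)$, which by weak duality is the optimal dual value, so $(\lambda,\eta)\in\mathcal{V}^*$ and strong duality holds. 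Conversely, for any $(\lambda,\eta)\in\mathcal{V}^*$, strong duality gives $q^{(t,k)}(\lambda,\eta)=\overline{f}^{t,k}(\mu^*)$; chaining $q^{(t,k)}(\lambda,\eta)\le L(\mu^*,\lambda,\eta)\le\overline{f}^{t,k}(\mu^*)$ (the first inequality since $\mu^*\in\Delta$, the second using $\lambda\ge0$ with $\overline{g}_i(\mu^*)\le0$ and the vanishing of the equality terms) forces equalities throughout. This recovers complementary slackness and shows that $\mu^*$ attains the infimum defining $q^{(t,k)}$, i.e. the inclusion $0\in\partial\overline{f}^{t,k}(\mu^*)+\sum_i\lambda_i\nabla\overline{g}_i(\mu^*)+\sum_j\eta_j\mf h_j+N(\Delta,\mu^*)$. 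Thus $\mathcal{V}^*=K(\mu^*)$, and combining with the first step, $\mathcal{V}^*$ is nonempty and bounded, which is the assertion of the corollary.

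The main obstacle is the second paragraph. Although each implication is a routine consequence of convexity, weak duality, and complementary slackness, some care is needed because $\overline{f}^{t,k}$ and the $\overline{g}_i$ are in general only subdifferentiable rather than differentiable, so the stationarity condition must be phrased as the subgradient inclusion involving the normal cone $N(\Delta,\mu^*)$ (as in the theorem of \citep{nguyen1980conditions}) rather than a gradient equation, and the claim that $\mu^*$ \emph{globally} minimizes $L(\cdot,\lambda,\eta)$ over $\Delta$ must be justified from convexity rather than from a first-order condition alone. Everything else is bookkeeping: matching the data of \eqref{eq:partial-static-prob} to \eqref{eq:general-program-2} and confirming that clauses (a) and (b) coincide with the generalized MFCQ.
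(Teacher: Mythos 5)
Your proposal is correct and follows essentially the same route as the paper: the paper obtains this corollary by directly applying the theorem of \citep{nguyen1980conditions} to \eqref{eq:partial-static-prob} with $S=\Delta$, exactly as you do. The two details you supply beyond that one-line application --- that $\text{int}(\Delta)\neq\emptyset$ guarantees $\text{int}(T(\Delta,\mu^*))\neq\emptyset$, and the weak-duality/complementary-slackness argument identifying the KKT set $K(\mu^*)$ with the dual optimal set $\mathcal{V}^*$ --- are left implicit in the paper for this corollary, though the latter identification is exactly the argument the paper carries out in its proof of the simplex analogue (Lemma \ref{cor:mfcq}), so your filled-in version matches the paper's intent.
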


\subsubsection{SELM implies weak EBC}\label{sec:selm-ebc}
In this section, we prove a key property of SELM, namely Lemma \ref{lem:leb}, which says SELM implies a weak EBC condition. We restate the lemma as follows, and for simplicity, we omit the subscript $t,k$ on the set $\mathcal{V}^*$ for simplicity:
\begin{lemma}\label{lem:leb-0}
Suppose Assumption \ref{as:selm} holds, then, there exists constants $c_0,\ell_0>0$ such that the dual function $q^{(t,k)}(\lambda,\eta)$ defined in \eqref{eq:target-dual} satisfies a weak error bound condition, namely, for any  $(\lambda^*,\eta^*) \in \mathcal{V}^*$, 
$q^{(t,k)}(\lambda^*,\eta^*) - q^{(t,k)}(\lambda,\eta)\geq c_0 \cdot\text{dist}((\lambda,\eta),\mathcal{V}^*)$ for any 
$(\lambda,\eta)$ such that
$\text{dist}((\lambda,\eta),\mathcal{V}^*)\geq\ell_0$.
\end{lemma}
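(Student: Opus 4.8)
The plan is to derive the weak EBC purely from concavity of the dual function together with compactness of its maximizer set, reducing the global linear-growth estimate to a gap computation on a single compact ``distance sphere'' around $\mathcal{V}^*$. First I would record the structural facts. For each fixed $\mu\in\Delta$ the map $(\lambda,\eta)\mapsto \overline{f}^{t,k}(\mu)+\sum_{i=1}^L\lambda_i\overline{g}_i(\mu)+\sum_{j=1}^M\eta_j(\overline{h}_j(\mu)-b_j)$ is affine, and by Assumption \ref{as:basic} its coefficients $\overline{g}_i(\mu),\overline{h}_j(\mu)$ are uniformly bounded over the compact set $\Delta$. Hence $q^{(t,k)}$, being the pointwise infimum over $\mu\in\Delta$ of a uniformly Lipschitz family of affine functions, is finite, concave, and Lipschitz (in particular continuous) on the closed convex domain $\mathcal{X}:=\mathbb{R}^L_+\times\mathbb{R}^M$. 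By Assumption \ref{as:selm}, $\mathcal{V}^*=\argmax q^{(t,k)}$ is nonempty and bounded; being also closed (a superlevel set of a concave function) and convex, it is compact and convex. Write $q^*:=\max q^{(t,k)}$.

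Next I would produce a strictly positive gap on a sphere. Fix any $\ell_0>0$ and set $S:=\{x\in\mathcal{X}:\text{dist}(x,\mathcal{V}^*)=\ell_0\}$. Since $\mathcal{V}^*$ is bounded and $x\mapsto\text{dist}(x,\mathcal{V}^*)$ is continuous, $S$ is compact and nonempty. Every point of $S$ lies at positive distance from the maximizer set, so $q^{(t,k)}<q^*$ on $S$; by continuity the maximum over the compact set $S$ is attained, hence $\delta:=q^*-\max_{x\in S}q^{(t,k)}(x)>0$. I then set $c_0:=\delta/\ell_0$.

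Finally I would propagate this gap to all far-away points by concavity. Given $x$ with $r:=\text{dist}(x,\mathcal{V}^*)\geq\ell_0$, let $x^\star:=\text{Proj}_{\mathcal{V}^*}(x)$ (well-defined since $\mathcal{V}^*$ is closed and convex) and put $y:=(1-\theta)x^\star+\theta x$ with $\theta:=\ell_0/r\in(0,1]$. From the variational inequality $\langle x-x^\star,z-x^\star\rangle\leq0$ for all $z\in\mathcal{V}^*$ one checks that $x^\star$ is also the projection of $y$, so $\text{dist}(y,\mathcal{V}^*)=\|y-x^\star\|=\ell_0$ and $y\in S$. Concavity gives $q^{(t,k)}(y)\geq(1-\theta)q^*+\theta q^{(t,k)}(x)$, which rearranges to $q^*-q^{(t,k)}(x)\geq\theta^{-1}\bigl(q^*-q^{(t,k)}(y)\bigr)\geq\theta^{-1}\delta=c_0\,r$, exactly the weak EBC of Definition \ref{def:ebc} with constants $c_0,\ell_0$.

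The main obstacle is not the core inequality but the supporting regularity and the projection claim: I must ensure $q^{(t,k)}$ is finite and continuous on all of $\mathcal{X}$, including boundary points where some $\lambda_i=0$ (this is where the uniform boundedness in Assumption \ref{as:basic} is essential, since otherwise the infimum could fail to be finite or Lipschitz), and I must justify that the interior segment point $y$ genuinely lands on the sphere $S$, i.e.\ that projecting $y$ returns the same point $x^\star$ as projecting $x$. Both are standard consequences of compactness and convexity, but they are precisely what upgrades the single-sphere gap $\delta$ into a global linear-growth bound.
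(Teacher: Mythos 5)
Your proof is correct and follows essentially the same route as the paper's: establish a positive gap $\delta$ between $q^*$ and the supremum of $q^{(t,k)}$ on the sphere of points at distance exactly $\ell_0$ from $\mathcal{V}^*$, then propagate that gap to every far-away point by concavity along the segment joining it to its projection onto $\mathcal{V}^*$, using the key geometric fact that the intermediate point lands exactly on that sphere. Your supporting details are in fact tidier than the paper's --- you justify the strict gap via compactness of the sphere together with Lipschitz continuity of $q^{(t,k)}$ (where the paper only invokes closedness), and you verify sphere membership via the projection variational inequality $\langle x-x^\star,\,z-x^\star\rangle\leq 0$ rather than the paper's contradiction argument --- but the skeleton of the argument is identical.
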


\begin{proof}[Proof of Lemma \ref{lem:leb-0}]
	
Since $\mathcal{V}^*$ is bounded, there must exist $\ell_0 >0$ such that $\mathcal{S}_1 := \{(\lambda,\eta):dist((\lambda,\eta),\mathcal{V}^*) = l_0\}\neq \emptyset$. Define $\tilde q: =\sup_{(\lambda,\eta)\in\mathcal{S}_1}q^{(t,k)}(\lambda,\eta)$. Then, since 
the set $\mathcal{S}_1$ is closed, there exists some constant $c_0>0$ such that 
$q^{(t,k)}(\lambda^*,\eta^*) - \tilde q\geq c_0 l_0$.
Now, consider any $(\lambda,\eta)$ such that $\text{dist}((\lambda,\eta),\mathcal{V}^*)\geq l_0$, and choose 
$(\lambda^*,\eta^*)\in \mathcal{V}^*$ such that 
\begin{equation}\label{eq:min-attain}
(\lambda^*,\eta^*) = \text{argmin}_{(\lambda_0,\eta_0)\in\mathcal{V}^*}\|(\lambda_0,\eta_0) - (\lambda,\eta)\|_2^2,
\end{equation}
i.e. $\|(\lambda^*,\eta^*) - (\lambda,\eta)\|_2 = \text{dist}((\lambda,\eta),\mathcal{V}^*) \geq l_0$.

 Choose $\theta := \frac{l_0}{\|(\lambda^*,\eta^*) - (\lambda,\eta)\|_2}$. Note that $0 < \theta \leq 1$.  Let $(\tilde\lambda, \tilde\eta):=((1-\theta)\lambda^*+\theta \lambda, (1-\theta) \eta^* + \theta \eta)$. The next claim shows that $(\tilde\lambda, \tilde\eta) \in \mathcal{S}_1$.
	
\textbf{Claim 1:} $(\tilde\lambda, \tilde\eta) \in \mathcal{S}_1$.
\begin{proof}
	It is easy to verify that $\|(\tilde\lambda,\tilde\eta) - (\lambda^*,\eta^*)\|_2 = l_0$. To prove this claim, it suffices to show that   
	\[
	(\lambda^*,\eta^*) = \text{argmin}_{(\lambda_0,\eta_0)\in\mathcal{V}^*}\|(\tilde\lambda,\tilde\eta) - (\lambda_0,\eta_0)\|_2^2.
	\]
	To see this, suppose on the contrary, there exists $(\overline\lambda,\overline\eta)\neq (\lambda^*,\eta^*)$ such that 
	$(\overline\lambda,\overline\eta)$ attains the above minimum, then, by the strong convexity of the square norm function and convexity of the set $\mathcal{V}^*$, the solution is unique, and it follows
	\begin{multline*}
	\|(\overline\lambda,\overline\eta) - (\lambda,\eta)\|_2
	\leq  \|(\overline\lambda,\overline\eta) - (\lambda',\eta')\|_2 + \| (\lambda',\eta') -  (\lambda,\eta)\|_2\\
	< \|(\lambda^*,\eta^*) - (\lambda',\eta')\|_2 + \| (\lambda',\eta') -  (\lambda,\eta)\|_2
	= \|(\lambda^*,\eta^*) - (\lambda,\eta)\|_2,
	\end{multline*}
	where the strict inequality follows from the aforementioned strong convexity and the last equality follows from the fact that 
	$(\lambda',\eta')\in\mathcal{L}$. However, this implies $\overline\lambda,\overline\eta$ is of smaller distance to $(\lambda,\eta)$ contradicting \eqref{eq:min-attain}. 
\end{proof}
	
By the concavity of $q^{(t,k)}(\lambda,\eta)$, we have,
\begin{equation}\label{eq:convexity-ineq}
q^{(t,k)}((1-\theta)\lambda^*+\theta \lambda, (1-\theta) \eta^* + \theta \eta) \geq (1-\theta)q^{(t,k)}(\lambda^*,\eta^*) + \theta q^{(t,k)}(\lambda,\eta).
\end{equation}
This further implies that 
\begin{align*}
~&q^{(t,k)}(\tilde\lambda, \tilde\eta) \geq (1-\theta)q^{(t,k)}(\lambda^*,\eta^*) + \theta q^{(t,k)}(\lambda,\eta)\\
\Rightarrow~&q^{(t,k)}(\tilde\lambda, \tilde\eta) - q^{(t,k)}(\lambda^*,\eta^*) \geq \theta
(q^{(t,k)}(\lambda,\eta) - q^{(t,k)}(\lambda^*,\eta^*)).
\end{align*}

Recalling the definition of $\tilde q =\sup_{(\lambda,\eta)\in\mathcal{S}_1}q^{(t,k)}(\lambda,\eta)$ and that $(\tilde\lambda, \tilde\eta) \in \mathcal{S}_1$ by Claim 1, we have 
\begin{align*}
~&\tilde q- q^{(t,k)}(\lambda^*,\eta^*)\geq \theta(q^{(t,k)}(\lambda,\eta) - q^{(t,k)}(\lambda^*,\eta^*))\\
\Rightarrow~&q^{(t,k)}(\lambda^*,\eta^*) - q^{(t,k)}(\lambda,\eta)\geq\frac{1}{\theta}
(q^{(t,k)}(\lambda^*,\eta^*) - \tilde q).
\end{align*}
Recalling that $q^{(t,k)}(\lambda^*,\eta^*) - \tilde q\geq c_0l_0$ and $\theta = \frac{l_0}{\|(\lambda^*,\eta^*) - (\lambda,\eta)\|_2}$, we have
\[
q^{(t,k)}(\lambda^*,\eta^*) - q^{(t,k)}(\lambda,\eta)\geq c_0\text{dist}((\lambda,\eta),\mathcal{V}^*),
\]
and we finish the proof.

\end{proof}

\subsection{On the relation between weak EBC and classical EBC}
Recall that the classical EBC, which has been shown to accelerate the convergence rate solving unconstrained and constrained programs \citep{tseng2010approximation, yang2015rsg, xu2017admm, wei2018solving}, is stated as follows:
\begin{definition}\label{def:local-error}
Let $F(\mathbf{x})$ be a convex function over $\mathbf{x}\in\mathcal{X}$.
Suppose $\Lambda^*:= \argmin_{\mathbf{x}\in\mathcal{X}}F(\mathbf{x})$ is non-empty. The function $F(\mathbf{x})$ is said to satisfy the error bound condition (EBC) with parameters $\beta\in(0,1], \delta >0$ and $C_\delta>0$ if for any $\mf x\in\mc S_\delta$, the $\delta$-sublevel set defined as $\{\mf x\in \mathcal{X}~|~F(\mathbf{x}) - F(\mathbf{x}^*)\leq\delta,~\mf x^*\in\Lambda^*\}$,
\begin{equation}\label{eq:local-error-bound}
\text{dist}(\mf x,\Lambda^*)\leq C_\delta (F(\mf x) - F(\mf x^*))^\beta,
\end{equation}
where $C_\delta$ is a positive constant possibly depending on $\delta$. In particular, when $\beta = 1/2$, $F(\mf x)$ is said to be locally quadratic and when $\beta = 1$, it is said to be locally linear.
\end{definition}

The following lemma shows that if the dual function further satisfies classical EBC, then, we can show that weak EBC holds with computable constants $\ell_0, c_0>0$.
\begin{lemma}\label{lem:ebc-estimate}
Suppose Assumption \ref{as:selm} holds, the dual function $q^{(t,k)}(\lambda,\eta)$ is continuous and satisfies an EBC as is defined in Definition \ref{def:local-error}, then, one has for any $(\lambda,\eta)\in\mathbb{R}^L_+\times \mathbb{R}^M$ such that 
$\text{dist}((\lambda,\eta),\mathcal{V}^*)\geq C_\delta\delta^\beta$,
\[
\text{dist}((\lambda,\eta),\mathcal{V}^*) \leq C_\delta\delta^{\beta-1}
(q^{(t,k)}(\lambda^*,\eta^*) - q^{(t,k)}(\lambda,\eta)),~\forall \lambda\in\mathbb{R}^L_+,~\eta\in\mathbb{R}^M
\]
\end{lemma}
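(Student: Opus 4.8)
The plan is to recognize that the asserted inequality is precisely a \emph{weak EBC} bound (Definition \ref{def:ebc}) with the explicit constants $\ell_0 = C_\delta \delta^\beta$ and $c_0 = \delta^{1-\beta}/C_\delta$, and to derive it from the classical EBC by a concavity–interpolation argument identical in spirit to the proof of Lemma \ref{lem:leb-0}. Throughout I work with the concave function $q^{(t,k)}$, whose argmax set is $\mathcal{V}^*$; in the language of Definition \ref{def:local-error} this means taking $F = -q^{(t,k)}$ and $\Lambda^* = \mathcal{V}^*$, so that $F(\mathbf{x}) - F(\mathbf{x}^*) = q^{(t,k)}(\lambda^*,\eta^*) - q^{(t,k)}(\lambda,\eta)$. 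Fix $(\lambda,\eta) \in \mathbb{R}^L_+ \times \mathbb{R}^M$ with $d := \text{dist}((\lambda,\eta),\mathcal{V}^*) \geq C_\delta \delta^\beta =: \ell_0$, and let $(\lambda^*,\eta^*)$ be its Euclidean projection onto the closed convex set $\mathcal{V}^*$.

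First I would introduce the interpolated point $p := (1-s)(\lambda^*,\eta^*) + s(\lambda,\eta)$ with $s = \ell_0/d \in (0,1]$, so that $\|p - (\lambda^*,\eta^*)\|_2 = s\,d = \ell_0$. The first key step is to verify that $(\lambda^*,\eta^*)$ remains the projection of $p$ onto $\mathcal{V}^*$, so that $\text{dist}(p,\mathcal{V}^*) = \ell_0$ \emph{exactly}; this is the same projection fact as Claim 1 in the proof of Lemma \ref{lem:leb-0}, and follows from the triangle inequality together with strong convexity of the squared norm and convexity of $\mathcal{V}^*$ (any competitor in $\mathcal{V}^*$ strictly closer to $p$ would be strictly closer to $(\lambda,\eta)$, contradicting the definition of the projection).

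Next I would show the gap at $p$ is at least $\delta$. Suppose instead that $q^{(t,k)}(\lambda^*,\eta^*) - q^{(t,k)}(p) < \delta$; then $p$ lies in the $\delta$-sublevel set, so classical EBC gives $\text{dist}(p,\mathcal{V}^*) \leq C_\delta\,(q^{(t,k)}(\lambda^*,\eta^*) - q^{(t,k)}(p))^\beta < C_\delta \delta^\beta = \ell_0$, contradicting $\text{dist}(p,\mathcal{V}^*) = \ell_0$. Hence $q^{(t,k)}(\lambda^*,\eta^*) - q^{(t,k)}(p) \geq \delta$. Finally I would invoke concavity of $q^{(t,k)}$ along the segment, $q^{(t,k)}(p) \geq (1-s)\,q^{(t,k)}(\lambda^*,\eta^*) + s\,q^{(t,k)}(\lambda,\eta)$, which rearranges to $q^{(t,k)}(\lambda^*,\eta^*) - q^{(t,k)}(p) \leq s\,(q^{(t,k)}(\lambda^*,\eta^*) - q^{(t,k)}(\lambda,\eta))$. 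Chaining these two facts yields $\delta \leq s\,(q^{(t,k)}(\lambda^*,\eta^*) - q^{(t,k)}(\lambda,\eta)) = (\ell_0/d)\,(q^{(t,k)}(\lambda^*,\eta^*) - q^{(t,k)}(\lambda,\eta))$, and solving for $d$ gives $d \leq (\ell_0/\delta)\,(q^{(t,k)}(\lambda^*,\eta^*) - q^{(t,k)}(\lambda,\eta)) = C_\delta \delta^{\beta-1}\,(q^{(t,k)}(\lambda^*,\eta^*) - q^{(t,k)}(\lambda,\eta))$, which is exactly the claim.

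The main obstacle is the projection claim of the second paragraph: establishing that the interpolated point $p$ still projects to $(\lambda^*,\eta^*)$, so that $\text{dist}(p,\mathcal{V}^*)$ \emph{equals} $\ell_0$ rather than merely being bounded above by $\ell_0$. Without this exact-distance fact the contradiction used to lower-bound the gap at $p$ collapses, since the classical EBC only controls points that genuinely lie at distance at least $\ell_0$ from $\mathcal{V}^*$. Everything else is a direct transcription of the EBC inequality through the linear interpolation and the concavity of $q^{(t,k)}$, so no substantial additional estimate is required.
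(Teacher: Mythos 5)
Your proof is correct, but it takes a genuinely different route from the paper's. The paper projects $(\lambda,\eta)$ onto the $\delta$-sublevel set $\mathcal{S}_\delta$, invokes the external result of \cite{yang2015rsg} (Lemma \ref{lem:yang-lin}) to bound $\|(\lambda,\eta)-(\lambda^\dagger_\delta,\eta^\dagger_\delta)\|_2$, then uses the continuity hypothesis on $q^{(t,k)}$ to argue that the gap at that projection point equals \emph{exactly} $\delta$ (otherwise a strictly closer point of $\mathcal{S}_\delta$ would exist along the segment), applies EBC at $(\lambda^\dagger_\delta,\eta^\dagger_\delta)$, and concludes by the triangle inequality $\text{dist}((\lambda,\eta),\mathcal{V}^*)\leq \text{dist}((\lambda^\dagger_\delta,\eta^\dagger_\delta),\mathcal{V}^*)+\|(\lambda,\eta)-(\lambda^\dagger_\delta,\eta^\dagger_\delta)\|_2$. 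You instead project onto $\mathcal{V}^*$ itself, interpolate to the point $p$ at distance exactly $\ell_0=C_\delta\delta^\beta$ from $\mathcal{V}^*$ (your projection claim is sound: it needs only the triangle inequality and colinearity, exactly as in Claim 1 of Lemma \ref{lem:leb-0}), lower-bound the gap at $p$ by $\delta$ via the contrapositive of EBC, and finish with concavity of the dual function along the segment. What your route buys: it is fully self-contained (no appeal to Lemma \ref{lem:yang-lin}), it never needs the continuity assumption in the lemma statement --- you only need the gap at $p$ to be $\geq\delta$, not $=\delta$ --- and it makes transparent that this lemma is the quantitative version of Lemma \ref{lem:leb-0} with $\ell_0=C_\delta\delta^\beta$ and $c_0=\delta^{1-\beta}/C_\delta$. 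What the paper's route buys: by factoring through the Yang--Lin lemma it isolates the generic convex-analytic content in a citable black box, at the cost of the extra continuity hypothesis and the exact-gap argument.
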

The proof of this lemma is delayed to Section \ref{sec:supp-lemma-proof}.

\subsection{Supporting lemmas in proof of Theorem \ref{main:theorem}}\label{proof:thm}
Throughout the section, we let $\mathcal{F}_t$ be the system history up to time $t$, which includes 
$\{g_i^{\tau}\}_{\tau=0}^{t-1}$,~$\{h_i^{\tau}\}_{\tau=0}^{t-1}$,~and $\{f^\tau\}_{\tau=0}^{t-1}$.

\subsubsection{Proof of lemmas in Section \ref{sec:reg-analysis}}\label{sec:pf-regret}


\begin{proof}[Proof of Lemma \ref{lem:strong-convex-queue}]
Applying Lemma \ref{lem:strong-convex} by setting $y = \mu^{t-1}$, $x^* = \mu^t$, $f(x) =\dotp{x}{p}$ and
$$
p = V\nabla f^{t-1}(\mu^{t-1}) + \sum_{i=1}^{L} Q_i(t)\nabla g_i^{t-1}(\mu^{t-1}) 
 + \sum_{i=1}^{M} H_i(t)h_i^{t-1},
$$ 
we have
\begin{multline}\label{eq:interim-0}
 \dotp{V\nabla f^{t-1}(\mu^{t-1}) + \sum_{i=1}^{L} Q_i(t)\nabla g_i^{t-1}(\mu^{t-1}) 
 + \sum_{i=1}^{M} H_i(t)h_i^{t-1}}{\mu^t} + \alpha D(\mu^{t},{\mu}^{t-1}) \\
 \leq
  \dotp{V\nabla f^{t-1}(\mu^{t-1}) + \sum_{i=1}^{L} Q_i(t)\nabla g_i^{t-1}(\mu^{t-1}) 
 + \sum_{i=1}^{M} H_i(t)h_i^{t-1}}{\mu} + \alpha (D(\mu,\mu^{t-1}) - D(\mu,\mu^t))
\end{multline}

On the other hand, define 
$$\tilde{g}_i^t := g^{t-1}_{i}(\mu^{t-1}) + \dotp{\nabla g_{i}^{t-1} (\mu^{t-1})}{\mu^t-\mu^{t-1}}.$$
 Using the updating rule (\ref{eq:Q-update}), (\ref{eq:H-update}) and Holder's inequality that $\dotp{x}{y}\leq \|x\|\|y\|_*$, we have
 \begin{align*}
 H_i(t+1)^2 - H_i(t)^2 &=  2H_i(t)(\dotp{h_i^{t-1}}{\mu^t}-b_i) +  |\dotp{h_i^{t-1}}{\mu^t}-b_i|^2\\
 &\leq  2H_i(t)(\dotp{h_i^{t-1}}{\mu^t}-b_i) + \frac{8R}{\beta}\|h_i^{t-1}\|_*^2,\\
 Q_i(t+1)^2 - Q_i(t)^2 &= \max\{Q_i(t) + \tilde{g}_i^t,~0\}^2 - Q_i(t)^2\leq 2Q_i(t)\tilde{g}_i^t+  (\tilde{g}_i^t)^2\\
 &\leq 2Q_i(t)\tilde{g}_i^t+  2(g^{t-1}_{i}(\mu^{t-1}))^2 + \frac{4R}{\beta}\|\nabla g_{i}^{t-1} (\mu^{t-1})\|_*^2,
 \end{align*}
where the inequality for $H_i(t+1)^2 - H_i(t)^2$ follows from 
\[
|\dotp{h_i^{t-1}}{\mu^t}-b_i|^2\leq 2 |\dotp{h_i^{t-1}}{\mu^t}|^2 + 2|b_i|^2 = 
2 |\dotp{h_i^{t-1}}{\mu^t}|^2 + 2|\expect{\dotp{h_i^{t-1}}{\mu^*}}|^2 \leq 8R/\beta,
\]
via Assumption \ref{as:basic}(3) that $\sup_{\mu^a,\mu^b\in\Delta}\|\mu^a-\mu^b\|^2\leq 2R/\beta$ and $b_i = \expect{\dotp{h_i^{t-1}}{\mu^*}}$.
The first inequality in the bound on $Q_i(t+1)^2 - Q_i(t)^2$ follows from the fact that if $Q_i(t) + \tilde{g}_i^t\geq0$, then, the equality is attained and if $Q_i(t) + \tilde{g}_i^t\leq0$, 
 $Q_i(t) + \tilde{g}_i^t\leq0$, then, 
$\max\{Q_i(t) + \tilde{g}_i^t,~0\}^2 = 0$ while  $ Q_i(t)^2 + 2Q_i(t)\tilde{g}_i^t+  (\tilde{g}_i^t)^2\geq0$. The third line follows from 
$\sup_{\mu^a,\mu^b\in\Delta}\|\mu^a-\mu^b\|^2\leq 2R/\beta$. 
Thus, we have
\begin{multline}\label{eq:q-bound}
\Delta(t)
 \leq \sum_{i=1}^LQ_i(t)\tilde{g}_i^t + \sum_{i=1}^MH_i(t)(\dotp{h_i^t}{\mu^t}-b_i) \\
+ \frac{4R}{\beta}\sum_{i=1}^M\|h_i^t\|_{*}^2 + \sum_{i=1}^L(g^{t-1}_{i}(\mu^{t-1}))^2 +  \frac{2R}{\beta} \sum_{i=1}^L\|\nabla g_{i}^{t-1} (\mu^{t-1})\|_*^2\\
\leq  \sum_{i=1}^LQ_i(t)\tilde{g}_i^t + \sum_{i=1}^MH_i(t)(\dotp{h_i^t}{\mu^t}-b_i)  + \frac{4RH^2}{\beta} + G^2+\frac{2RD_2^2}{\beta},
\end{multline}
where the last inequality follows from Assumption \ref{as:basic}(1). 
To this point, we consider the following ``drift-plus-penalty'' term, i.e.
\begin{align*}
&V\dotp{\nabla f^{t-1}(\mu^{t-1})}{\mu^t-\mu^{t-1}}+\Delta(t) + \alpha D(\mu^t, \mu^{t-1})\\
\leq&  V\dotp{\nabla f^{t-1}(\mu^{t-1})}{\mu^t-\mu^{t-1}} + \sum_{i=1}^LQ_i(t)\l(g^{t-1}_{i}(\mu^{t-1}) + \dotp{\nabla g_{i}^{t-1} (\mu^{t-1})}{\mu^t-\mu^{t-1}}\r)\\
 &+ \sum_{j=1}^{M} H_j(t)(\dotp{h_j^{t-1}}{\mu^t}-b_i)+ \alpha D(\mu^t,\mu^{t-1})+ \frac{4RH^2}{\beta} + G^2+\frac{2RD_2^2}{\beta}.
\end{align*}
where the first inequality follows from \eqref{eq:q-bound}.  
Now, by \eqref{eq:interim-0}, we have for any $\mu\in\Delta$,
\begin{align*}
&V\dotp{\nabla f^{t-1}(\mu^{t-1})}{\mu^t-\mu^{t-1}}+\Delta(t) + \alpha D(\mu^t, \tilde{\mu}^{t-1})  \nonumber\\
 \leq&  V\dotp{\nabla f^{t-1}(\mu^{t-1})}{\mu-\mu^{t-1}} + \sum_{i=1}^LQ_i(t)\l(g^{t-1}_{i}(\mu^{t-1}) + \dotp{\nabla g_{i}^{t-1} (\mu^{t-1})}{\mu-\mu^{t-1}}\r)  \nonumber\\
 &+ \sum_{j=1}^{M} H_j(t)(\dotp{h_j^{t-1}}{\mu}-b_i)+ \alpha D(\mu,\mu^{t-1}) - \alpha D(\mu, \mu^t)
 + \frac{4RH^2}{\beta} + G^2+\frac{2RD_2^2}{\beta}.  
\end{align*}

Note that by convexity, we have for any $\mu$,
\begin{align*}
f^{t-1}(\mu) &\geq f^{t-1}(\mu^{t-1}) + \dotp{\nabla f^{t-1}(\mu^{t-1})}{\mu - \mu^{t-1}},\\
g_i^{t-1}(\mu) &\geq g_i^{t-1}(\mu^{t-1}) + \dotp{\nabla g_i^{t-1}(\mu^{t-1})}{\mu - \mu^{t-1}}. 
\end{align*}
Thus, it follows \eqref{eq:interim-1} holds.
\end{proof}

\begin{proof}[Proof of Lemma \ref{lem:lhs-lower-bound}]
We have
\begin{align*}
&V\dotp{\nabla f^{t-1}(\mu^{t-1})}{\mu^t-\mu^{t-1}} + \alpha D(\mu^t, \mu^{t-1}) \\
\geq&V\dotp{\nabla f^{t-1}(\mu^{t-1})}{\mu^t-\mu^{t-1}} + \frac{\alpha\beta}{2} \|\mu^t- \mu^{t-1}\|^2 \\
\geq&-V \|\nabla f^{t-1}(\mu^{t-1})\|_{*} \|\mu^t-\mu^{t-1}\| + \frac{\alpha\beta}{2} \|\mu^t- \mu^{t-1}\|^2\\
\geq& -V\l(\frac{\alpha\beta}{2V}\|\mu^t- \mu^{t-1}\|^2 +  \frac{V}{2\alpha\beta}\|\nabla f^{t-1}(\mu^{t-1})\|_*^2\r)
+ \frac{\alpha\beta}{2} \|\mu^t- \mu^{t-1}\|^2 \\
=& - \frac{V^{2}}{2\alpha\beta}\|\nabla f^{t-1}(\mu^{t-1})\|_{*}^2
\geq - \frac{V^{2}}{2\alpha\beta}D_1^2.
\end{align*}
where the first inequality follows from the strong convexity \eqref{eq:strong-convexity}, the second inequality follows from Holder's inequality, the third inequality follows from the fact that $ab\leq\frac{a^2+b^2}{2},~\forall a,b$, and the last inequality follows from the bound $\|\nabla f^{t-1}(\mu^{t-1})\|_*\leq D_1$. 
\end{proof}

\subsubsection{Proof of Lemma \ref{lem:constraint}}\label{sec:pf-constraint-1}
We start with a supporting lemma:
\begin{lemma}\label{lem:supp-constraint}
The updating rule \eqref{eq:Q-update} and \eqref{eq:H-update} delivers the following constraint violation bounds:
\begin{align*}
&\mathbb{E}\l\|\l[\frac1T\sum_{t=0}^{T-1}\overline{\mathbf{g}}(\mu^t)\r]_+\r\|_2
\leq \frac{\expect{\|\mf Q(t)\|_2}}{T} + \frac{D_2}{T}\sum_{t=0}^{T-1}\expect{\|\mu^{t+1} - \mu^t\|}\\
&\mathbb{E}\l\|\frac1T\sum_{t=0}^{T-1}\overline{\mf h}(\mu^t) - \mf b\r\|_2
\leq \frac{\expect{\|\mf H(t)\|_2}}{T} + \frac{H}{T}\sum_{t=0}^{T-1}\expect{\|\mu^{t+1} - \mu^t\|}
\end{align*}
\end{lemma}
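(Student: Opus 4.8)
The plan is to treat the inequality and equality parts by the same two-stage recipe: first telescope the dual recursions to express the terminal multipliers $\mf Q(T)$ and $\mf H(T)$ as cumulative sums of per-slot constraint slacks evaluated along the iterate trajectory, and then trade each realized per-slot slack for the averaged expected constraint $\overline{\mf g}(\mu^t)$ or $\overline{\mf h}(\mu^t)$ appearing on the left-hand side, charging the mismatch to a one-step primal drift $\|\mu^{t+1}-\mu^t\|$. The forward (one-step-ahead) pairing is what produces exactly the $\|\mu^{t+1}-\mu^t\|$ increments in the statement.

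For the equality constraints I would start from the exact linear recursion \eqref{eq:H-update}. Read one step ahead it reads $H_j(t+2)-H_j(t+1)=\dotp{h_j^t}{\mu^{t+1}}-b_j$, so summing over $t=0,\dots,T-1$ telescopes the left side to $H_j(T+1)-H_j(1)$ (a terminal dual term, up to $O(1)$ boundary contributions). Comparing against $\sum_t(\overline h_j(\mu^t)-b_j)$ and inserting $\pm\dotp{h_j^t}{\mu^t}$ splits the difference into $\sum_t\dotp{h_j^t}{\mu^t-\mu^{t+1}}$, which Hölder bounds by $\sum_t\|h_j^t\|_*\|\mu^{t+1}-\mu^t\|$, plus a residual $\sum_t\dotp{\expect{h_j}-h_j^t}{\mu^t}$. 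Collecting the first contribution across $j$ by Cauchy--Schwarz and Assumption~\ref{as:basic}(2) supplies the factor $H$ on $\tfrac1T\sum_t\expect{\|\mu^{t+1}-\mu^t\|}$, while the telescoped term gives $\tfrac1T\expect{\|\mf H(T)\|_2}$ after a triangle inequality.

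For the inequality constraints I would first drop the projection in \eqref{eq:Q-update}: since $\max\{x,0\}\ge x$, the update gives $Q_i(t+2)-Q_i(t+1)\ge g_i^t(\mu^t)+\dotp{\nabla g_i^t(\mu^t)}{\mu^{t+1}-\mu^t}$, and I keep the linearization. Telescoping this one-sided inequality, using $\mf Q\ge 0$ coordinatewise together with the monotonicity of $[\cdot]_+$ and $\|\cdot\|_2$ under coordinatewise domination, bounds $\big\|[\tfrac1T\sum_t g_i^t(\mu^t)]_+\big\|_2$ by $\tfrac1T\|\mf Q(T)\|_2$ plus a Hölder drift term; here $\sqrt{\sum_i\|\nabla g_i^t(\mu^t)\|_*^2}\le D_2$ furnishes the coefficient $D_2$ on $\tfrac1T\sum_t\|\mu^{t+1}-\mu^t\|$.

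The crux, and the step I would treat most carefully, is converting the realized per-slot constraints $g_i^t(\mu^t)$ and $\dotp{h_j^t}{\mu^t}$ into the expected $\overline g_i(\mu^t)$ and $\overline h_j(\mu^t)$. The key is that the matching time index makes $\mu^t$, which is $\mathcal F_t$-measurable, independent of the i.i.d. draws $\gamma^t$ and $h_j^t$, so that $\expect{g_i^t(\mu^t)\mid\mathcal F_t}=\overline g_i(\mu^t)$ and $\expect{\dotp{h_j^t}{\mu^t}\mid\mathcal F_t}=\overline h_j(\mu^t)$; this is precisely why I shift the pairing one step forward to obtain the $\|\mu^{t+1}-\mu^t\|$ drift rather than a backward increment. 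The subtlety is that replacing a realized cumulative sum by its expected counterpart introduces a martingale-difference sequence (realized minus conditional mean); by the tower property its contribution telescopes away in expectation, so the clean $\tfrac1T\|\mf Q(T)\|_2$ and $\tfrac1T\|\mf H(T)\|_2$ bookkeeping survives. Carrying this exchange through the nonlinear $[\cdot]_+$ and $\|\cdot\|_2$, rather than merely for scalar expectations, is the main obstacle I would need to argue with care.
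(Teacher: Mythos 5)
Your mechanical steps coincide with the paper's own proof of this lemma: the paper likewise drops the projection in \eqref{eq:Q-update} via $\max\{x,0\}\ge x$ after H\"older-bounding the linearization term $\dotp{\nabla g_i^{t-1}(\mu^{t-1})}{\mu^t-\mu^{t-1}}$ by $-\|\nabla g_i^{t-1}(\mu^{t-1})\|_*\|\mu^t-\mu^{t-1}\|$, telescopes to $Q_i(T)\ge\sum_t g_i^t(\mu^t)-\sum_t\|\nabla g_i^t(\mu^t)\|_*\|\mu^{t+1}-\mu^t\|$, inserts $[\cdot]_+$ using nonnegativity of the right-hand side, and finishes with Cauchy--Schwarz over $i$ (producing $D_2$) and over $j$ (producing $H$); the $\mf H$-recursion is declared to be handled ``in the same way.'' Your forward-shifted indexing (terminating at $H_j(T+1)$ with $O(1)$ boundary terms) is, if anything, tidier than the paper's.

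The genuine gap is exactly the step you flag at the end, and your proposed resolution of it fails: the martingale-difference correction does \emph{not} ``telescope away in expectation'' once it sits inside $[\cdot]_+$ and $\|\cdot\|_2$. Concretely, with $M_T^{(i)}:=\sum_{t=0}^{T-1}\big(g_i^t(\mu^t)-\overline g_i(\mu^t)\big)$, the pathwise bound your argument actually produces is
\[
\Big[\frac1T\sum_{t=0}^{T-1}\overline g_i(\mu^t)\Big]_+\le \frac{Q_i(T)}{T}+\frac1T\sum_{t=0}^{T-1}\|\nabla g_i^t(\mu^t)\|_*\|\mu^{t+1}-\mu^t\|+\frac{|M_T^{(i)}|}{T},
\]
and while the tower property gives $\mathbb{E}[M_T^{(i)}]=0$, it does not make $\mathbb{E}|M_T^{(i)}|$ vanish: by orthogonality of martingale differences the generic bound is $\mathbb{E}|M_T^{(i)}|\le\sqrt{\mathbb{E}[(M_T^{(i)})^2]}=O(G\sqrt{T})$, and this order is in general attained. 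So your route proves the two inequalities only with extra additive $O(1/\sqrt{T})$ terms (and similarly for the $\mf h$-part residual $\sum_t\dotp{h_j^t-\expect{h_j^t}}{\mu^t}$), which are absent from the statement. What the tower property does prove cleanly is the expectation-\emph{inside} variant, namely a bound on $\big\|\big[\frac1T\sum_t\mathbb{E}\,\overline{\mathbf g}(\mu^t)\big]_+\big\|_2$, which by Jensen's inequality is weaker than the stated left-hand side. You should know, however, that the paper's own proof does not close this step either: after telescoping the realized $g_i^t(\mu^t)$ it simply writes $\overline{\mathbf g}(\mu^t)$ in the final display, silently identifying realized with expected constraint functions. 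So you located the crux correctly---it is a real gap, shared by the paper---but your fix as stated does not yield the lemma's bound; an honest proof carries the extra $O(1/\sqrt{T})$ martingale terms, which are harmless for Theorem \ref{main:theorem} but missing from the lemma as written.
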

\begin{proof}[Proof of Lemma \ref{lem:supp-constraint}]
We prove the first inequality and the second inequality is proved in the same way. Note that by \eqref{eq:Q-update}, we have
\begin{align*}
Q_i(t+1) =& \max \{ Q_i(t) + g_i^{t-1}(\mu^{t-1}) + \dotp{\nabla g_i^{t-1}(\mu^{t-1})}{\mu^t-\mu^{t-1}},~0\}\\
\geq& \max\{Q_i(t) + g_i^{t-1}(\mu^{t-1})- \|\nabla g_i^{t-1}(\mu^{t-1})\|_*\|\mu^t-\mu^{t-1}\|,~0\} \\
\geq& Q_i(t) + g_i^{t-1}(\mu^{t-1})- \|\nabla g_i^{t-1}(\mu^{t-1})\|_*\|\mu^t-\mu^{t-1}\|.
\end{align*}
Taking a telescoping sum from both sides from $0$ to $T-1$,
\[
Q_i(T) \geq \sum_{t=0}^{T-1}g_i^{t}(\mu^{t}) - \sum_{t=0}^{T-1}\|\nabla g_i^{t}(\mu^{t})\|_*\|\mu^{t+1}-\mu^{t}\|.
\]
This implies 
\[
\l[ \frac1T\sum_{t=0}^{T-1}g_i^{t}(\mu^{t})  \r]_+\leq \frac{Q_i(T)}{T} + \frac1T\sum_{t=0}^{T-1}\|\nabla g_i^{t}(\mu^{t-1})\|_*\|\mu^{t+1}-\mu^{t}\|,
\]
since the right hand side is nonnegative. Thus, we have
\begin{align*}
\l\|\l[\frac1T\sum_{t=0}^{T-1}\overline{\mathbf{g}}(\mu^t)\r]_+\r\|_2
\leq \frac{\|\mf Q(T)\|_2}{T}
+ \frac1T\sum_{t=0}^{T-1}\sqrt{\sum_{i=1}^L\|\nabla g_i^{t}(\mu^{t})\|_*^2}\|\mu^{t+1}-\mu^{t}\|\\
\leq \frac{\|\mf Q(T)\|_2}{T} + \frac{D_2}{T}\sum_{t=0}^{T-1}\|\mu^{t+1} - \mu^t\|,
\end{align*}
where the second inequality follows from Assumption \ref{as:basic}. 
\end{proof}

\begin{proof}[Proof of Lemma \ref{lem:constraint}]
It is enough to bound the difference $\expect{\|\mu^{t+1} - \mu^t\|}$. For this, we start from the relation \eqref{eq:interim-0} by taking $\mu=\mu^{t-1}$,
\begin{align*}
&V\dotp{\nabla f^{t-1}(\mu^{t-1})}{\mu^t}
+ \sum_{i=1}^{L} Q_i(t)\dotp{\nabla g_i^{t-1}(\mu^{t-1})}{\mu^t}
+  \sum_{j=1}^{M} H_i(t)\dotp{h_i^{t-1}}{\mu^t}  + \alpha D(\mu^t, \mu^{t-1})\nonumber\\
 \leq&  V\dotp{\nabla f^{t-1}(\mu^{t-1})}{\mu^{t-1}} + \sum_{i=1}^LQ_i(t)\dotp{\nabla g^{t-1}_{i}(\mu^{t-1})}{\mu^{t-1}} + \sum_{j=1}^{M} H_j(t)\dotp{h_i^{t-1}}{\mu^{t-1}}
  - \alpha D(\mu^{t-1}, \mu^t).
\end{align*}
By the fact that 
\begin{align*}
&\dotp{\nabla f^{t-1}(\mu^{t-1})}{\mu^{t-1}-\mu^{t}} 
\leq \|\nabla f^{t-1}(\mu^{t-1})\|_*\|\mu^{t-1}-\mu^{t}\|\leq D_1\|\mu^t-\mu^{t-1}\|\\
&\sum_{i=1}^LQ_i(t)\dotp{\nabla g^{t-1}_{i}(\mu^{t-1}) }{\mu^{t-1}-\mu^{t}}\\
&\leq \|\mf Q(t)\|_2\sqrt{\sum_{i=1}^L(\|\nabla g_i(\mu^{t-1})\|_*\|\mu^t-\mu^{t-1}\|)^2}\leq D_2\|\mf Q(t)\|_2
\|\mu^t-\mu^{t-1}\|, \\
&\sum_{j=1}^{M} H_j(t)\dotp{h_i^{t-1}}{\mu^{t-1}-\mu^{t}}
\leq \|\mf H(t)\|_2\sqrt{\sum_{i=1}^M(\|h_i^{t-1}\|_*\|\mu^t-\mu^{t-1}\|)^2}\leq  H\|\mf H(t)\|_2\|\mu^t-\mu^{t-1}\|,
\end{align*}
We get
\begin{align*}
D(\mu^t, \mu^{t-1}) + D(\mu^{t-1}, \mu^t)
\leq \frac{1}{\alpha}\l(VD_1 + D_2\|\mf Q(t)\|_2 + H\|\mf H(t)\|_2\r)\|\mu^t-\mu^{t-1}\|
\end{align*}
By strong convexity \eqref{eq:strong-convexity}, we have 
$$
D(\mu^t, \mu^{t-1}) + D(\mu^{t-1}, \mu^t)\geq\beta\|\mu^t - \mu^{t-1}\|^2
$$
Thus, it follows,
\begin{align*}
\beta\|\mu^t - \mu^{t-1}\|^2
\leq  \frac{1}{\alpha}\l(VD_1 + D_2\|\mf Q(t)\|_2 + H\|\mf H(t)\|_2\r)\|\mu^t-\mu^{t-1}\|.
\end{align*}
Solving the above quadratic inequality yields
\begin{align*}
\|\mu^t - \mu^{t-1}\|
\leq \frac{1}{\alpha\beta}\l(VD_1 + D_2\|\mf Q(t)\|_2 + H\|\mf H(t)\|_2\r).
\end{align*}
Taking the expectation from both sides and subtracting this bound into Lemma \ref{lem:supp-constraint} result in
\[
\mathbb{E}\l\|\l[\frac1T\sum_{t=0}^{T-1}\overline{\mathbf{g}}(\mu^t)\r]_+\r\|_2
\leq \frac{\expect{\|\mf Q(t)\|_2}}{T} + \frac{VD_1D_2}{\alpha\beta} + \frac{1}{T}\frac{D_{2}}{\alpha\beta} \sum_{t=1}^{T}\l(D_2\expect{\|\mf Q(t)\|_2} + H\expect{\|\mf H(t)\|_2}\r) 
\]
One can prove the bound on $\mathbb{E}\l\|\frac1T\sum_{t=0}^{T-1}\overline{\mf h}(\mu^t)\r\|_2$ with exactly the same computation and we omit the proof.
\end{proof}

\subsubsection{Proof of Lemma \ref{lem:dual-bound}}
For simplicity of notations, let constant $\overline{c}$ be the minimum over all $c_0$'s and let $\overline{\ell}$ be the maximum over all $\ell_0$'s in Lemma \ref{lem:leb} with $t=0,1,2,\cdots, T-1$ and $k=\sqrt{T}$.  We start with the following supporting lemma:
\begin{lemma}\label{lem:t0-drift}
Consider the $t_0$ slots drift for some positive integer $t_0$, then we have
\begin{align}\label{eq:interim-3}
&\frac{\|\mf Q(t+t_0)\|_2^2 + \|\mf H(t+t_0)\|_2^2 - \|\mf Q(t)\|_2^2 - \|\mf H(t)\|_2^2}{2} \nonumber \\
\leq &V \sum_{\tau=t}^{t+t_0-1}f^{\tau-1}(\mu) + 
\sum_{i=1}^LQ_i(t)\sum_{\tau=t}^{t+t_0-1}g^{\tau-1}_{i}(\mu) 
+ \sum_{j=1}^{M} H_j(t)\sum_{\tau=t}^{t+t_0-1}(\dotp{h_j^{\tau-1}}{\mu}-b_j)  +\frac{1}{2}C_{V,\alpha,t_0}.
\end{align}
\end{lemma}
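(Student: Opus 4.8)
The plan is to apply the single-slot drift-plus-penalty inequality \eqref{eq:interim-1} of Lemma \ref{lem:strong-convex-queue} at every slot in the window $\tau\in\{t,t+1,\ldots,t+t_0-1\}$, telescope the result, and then replace the running multipliers $Q_i(\tau),H_j(\tau)$ by the fixed values $Q_i(t),H_j(t)$ that appear in the statement. First I would write \eqref{eq:interim-1} at slot $\tau$ and invoke Lemma \ref{lem:lhs-lower-bound} (inequality \eqref{eq:res-bound}) to lower-bound the left-hand term $V\dotp{\nabla f^{\tau-1}(\mu^{\tau-1})}{\mu^\tau-\mu^{\tau-1}}+\alpha D(\mu^\tau,\mu^{\tau-1})$ by $-V^2D_1^2/(2\alpha\beta)$. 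This isolates the one-slot drift $\Delta(\tau)$ and yields, for the fixed $\mu$,
\begin{align*}
\Delta(\tau)\le\;& V\big(f^{\tau-1}(\mu)-f^{\tau-1}(\mu^{\tau-1})\big)+\sum_{i=1}^L Q_i(\tau)g_i^{\tau-1}(\mu)\\
&+\sum_{j=1}^M H_j(\tau)\big(\dotp{h_j^{\tau-1}}{\mu}-b_j\big)+\alpha\big(D(\mu,\mu^{\tau-1})-D(\mu,\mu^\tau)\big)+M+\tfrac{V^2D_1^2}{2\alpha\beta},
\end{align*}
where $M=\tfrac{4RH^2}{\beta}+G^2+\tfrac{2RD_2^2}{\beta}$ is the constant of Lemma \ref{lem:strong-convex-queue}.

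Summing over $\tau$ from $t$ to $t+t_0-1$, the left side telescopes to $\tfrac12(\|\mf Q(t+t_0)\|_2^2-\|\mf Q(t)\|_2^2+\|\mf H(t+t_0)\|_2^2-\|\mf H(t)\|_2^2)$, the Bregman differences telescope to $\alpha(D(\mu,\mu^{t-1})-D(\mu,\mu^{t+t_0-1}))\le\alpha R$ by nonnegativity of $D$ and Assumption \ref{as:basic}(3), the terms $-Vf^{\tau-1}(\mu^{\tau-1})$ are each bounded by $VF$ giving $VFt_0$, and the constants $M$ and $V^2D_1^2/(2\alpha\beta)$ each contribute $t_0$ copies. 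This already recovers the linear-in-$t_0$ contributions and the $\alpha R$ piece of $\tfrac12 C_{V,\alpha,t_0}$, together with the target penalty term $V\sum_\tau f^{\tau-1}(\mu)$.

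The crux is the multiplier terms: $Q_i(\tau),H_j(\tau)$ drift inside the window, whereas the statement needs $Q_i(t),H_j(t)$. I would split $Q_i(\tau)=Q_i(t)+(Q_i(\tau)-Q_i(t))$ (and likewise for $H$), keep the $Q_i(t),H_j(t)$ pieces as the target terms, and estimate the remainders. From \eqref{eq:Q-update}, $|Q_i(s+1)-Q_i(s)|\le|\tilde g_i^s|$ with $\tilde g_i^s:=g_i^{s-1}(\mu^{s-1})+\dotp{\nabla g_i^{s-1}(\mu^{s-1})}{\mu^s-\mu^{s-1}}$, so by the triangle inequality $\|\mf Q(\tau)-\mf Q(t)\|_2\le t_0\max_s\|\tilde{\mf g}^s\|_2$, and Assumption \ref{as:basic} with $\|\mu^s-\mu^{s-1}\|^2\le 2R/\beta$ gives $\|\tilde{\mf g}^s\|_2^2\le 2G^2+4RD_2^2/\beta$. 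A Cauchy–Schwarz bound $\sum_\tau\sum_i(Q_i(\tau)-Q_i(t))g_i^{\tau-1}(\mu)\le G\sum_\tau\|\mf Q(\tau)-\mf Q(t)\|_2$ followed by the AM–GM step $G\|\tilde{\mf g}^s\|_2\le\tfrac12(G^2+\|\tilde{\mf g}^s\|_2^2)\le\tfrac32 G^2+\tfrac{2RD_2^2}{\beta}$ produces the contribution $(\tfrac32 G^2+\tfrac{2RD_2^2}{\beta})t_0^2$. The $H$-remainder is identical: from \eqref{eq:H-update}, $\|\mf H(\tau)-\mf H(t)\|_2\le t_0\sqrt{8RH^2/\beta}$ and $\|\mf h^{\tau-1}(\mu)-\mf b\|_2\le\sqrt{8RH^2/\beta}$ (both obtained as in Lemma \ref{lem:strong-convex-queue} from $\sum_j\|h_j^{\tau-1}\|_*^2\le H^2$ and $\|\mu\|,\|\mu^*\|\le\sqrt{2R/\beta}$), so Cauchy–Schwarz gives $\tfrac{8RH^2}{\beta}t_0^2$.

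I expect this last step to be the main obstacle: controlling the within-window excursion of the dual process and choosing the Cauchy–Schwarz/AM–GM splits so that the accumulated error lands exactly on the $t_0^2$-coefficient $\tfrac32 G^2+\tfrac{2RD_2^2}{\beta}+\tfrac{8RH^2}{\beta}$. Everything else is telescoping and substitution of the boundedness constants of Assumption \ref{as:basic}; collecting the linear-in-$t_0$, quadratic-in-$t_0$, and $\alpha R$ pieces reproduces $\tfrac12 C_{V,\alpha,t_0}$ and establishes \eqref{eq:interim-3}.
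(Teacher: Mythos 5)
Your proposal is correct and follows essentially the same route as the paper's proof: apply the single-slot DPP bound (Lemma \ref{lem:strong-convex-queue}) with Lemma \ref{lem:lhs-lower-bound}, telescope over the window, bound the $f$-terms by $VF t_0$ and the Bregman terms by $\alpha R$, and control the within-window excursion of the multipliers via the update-rule increments to obtain the $t_0^2$ terms. Your vector-level Cauchy--Schwarz-plus-AM--GM treatment of the remainder terms is just a cosmetic repackaging of the paper's entry-wise AM--GM estimates in \eqref{eq:inter-bound-g} and \eqref{eq:inter-bound-h}, and it lands on exactly the same coefficients $\tfrac32 G^2+\tfrac{2RD_2^2}{\beta}+\tfrac{8RH^2}{\beta}$.
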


\begin{proof}[Proof of Lemma \ref{lem:t0-drift}]
We start from equation \eqref{eq:interim-1}. Substituting \eqref{eq:res-bound}, we have 
\begin{multline*}
\Delta(t) + V(f^{t-1}(\mu^{t-1}) - f^{t-1}(\mu)) \leq 
\sum_{i=1}^LQ_i(t)g^{t-1}_{i}(\mu) + \sum_{j=1}^{M} H_j(t)(\dotp{h_j^{t-1}}{\mu}-b_j)\\
+ \frac{4RH^2}{\beta} + G^2+\frac{2RD_2^2}{\beta}+  \frac{V^{2}D_1^2}{2\alpha\beta}
+ \alpha D(\mu,\mu^{t-1}) - \alpha D(\mu, \mu^t).
\end{multline*} 
Take the summation from both sides between $t$ to $t+t_0-1$ for some $t_0$ to be determined later, we obtain
\begin{multline}\label{eq:drift-1}
\frac{\|\mf Q(t+t_0)\|_2^2 + \|\mf H(t+t_0)\|_2^2 - \|\mf Q(t)\|_2^2 - \|\mf H(t)\|_2^2}{2} \leq  \\
\sum_{\tau=t}^{t+t_0-1}\sum_{i=1}^LQ_i(\tau)g^{\tau-1}_{i}(\mu)  
+ \sum_{\tau=t}^{t+t_0-1}\sum_{j=1}^{M} H_j(\tau)(\dotp{h_j^{\tau-1}}{\mu}-b_j)
+V \sum_{\tau=t}^{t+t_0-1}(f^{\tau-1}(\mu^{\tau-1}) - f^{\tau-1}(\mu))\\
+\l( \frac{4RH^2}{\beta} + G^2+\frac{2RD_2^2}{\beta}+  \frac{V^{2}D_1^2}{2\alpha\beta} \r)t_0
+ \alpha D(\mu,\mu^{t-1}) - \alpha D(\mu, \mu^{t+t_0-1}).
\end{multline}

Using Assumption \ref{as:basic}, we have $V \sum_{\tau=t}^{t+t_0-1}f^{\tau-1}(\mu^{\tau-1})\leq VFt_{0}$.
Recall that $Q_i(t+1) = \max\{Q_i(t) + \tilde{g}_i^t,~0\}$, where  
$$\tilde{g}_i^t := g^{t-1}_{i}(\mu^{t-1}) + \dotp{\nabla g_{i}^{t-1} (\mu^{t-1})}{\mu^t-\mu^{t-1}},$$
and $H_j(t+1) = H_j(t) + \dotp{h_j^{t-1}}{\mu^t} - b_j$, we have

\begin{align}
&\sum_{\tau=t}^{t+t_0-1}\sum_{i=1}^L\l(Q_i(\tau)- Q_i(t)\r)g^{\tau-1}_{i}(\mu) \nonumber \\
\leq& \sum_{\tau=t+1}^{t+t_0-1}\sum_{i=1}^L (\sum_{\tau^{\prime}=t}^{\tau-1}|\tilde g_i^{\tau^{\prime}}| )\cdot |g^{\tau-1}_{i}(\mu)| \nonumber\\
\leq&\sum_{\tau=t+1}^{t+t_0-1}\sum_{\tau^{\prime}=t}^{\tau-1}\sum_{i=1}^L(|\tilde g_i^{\tau^{\prime}}|^2 + |g^{\tau-1}_{i}(\mu)|^2)/2  \nonumber\\
\leq&\sum_{\tau=t+1}^{t+t_0-1}\sum_{\tau^{\prime}=t}^{\tau-1}\sum_{i=1}^L|g^{\tau^{\prime}-1}_{i}(\mu^{\tau^{\prime}-1})|^2 + \|\nabla g^{\tau^{\prime}-1}_{i} \|_*^2\frac{2R}{\beta} 
+ |g^{\tau-1}_{i}(\mu)|^2/2  \nonumber\\
\leq& t_0^{2}\l(\frac32G^2 + \frac{2RD_2^2}{\beta}\r), \label{eq:inter-bound-g}
\end{align}
where the second from the last inequality follows from $\|\mu^t- \mu^{t-1}\|^2\leq2R/\beta$, and the
 last inequality follows from Assumption \ref{as:basic}. Similarly, we can show that
\begin{align}
\sum_{\tau=t}^{t+t_0-1}\sum_{j=1}^M\l(H_j(\tau)- H_j(t)\r)  (\dotp{h_j^{\tau-1}}{\mu^\tau} -b_j)
\leq t_0^{2}\frac{8RH^2}{\beta}.\label{eq:inter-bound-h}
\end{align}

Substituting the above two bounds into \eqref{eq:drift-1}, using the fact that $ \alpha D(\mu, \mu^{t+t_0-1})\geq0$ and $D(\mu,\mu^t)\leq R$, recalling that $C_{V,\alpha,t_0} := 2\l(\frac{4RH^2}{\beta} + G^{2}+\frac{2RD_2^2}{\beta}+  \frac{V^{2}D_1^2}{2\alpha\beta} + VF \r)t_0 + 2\l(\frac32G^2 + \frac{2RD_2^2}{\beta} + \frac{8RH^2}{\beta}\r)t_0^{2}+ 2\alpha R$
yields the desired result.

\end{proof}


\begin{proof}[Proof of Lemma \ref{lem:dual-bound}]
Thus, taking a conditional expectation from both sides conditioned on $\mathcal F^{t-1}$, we get
\begin{multline}\label{eq:interim-4}
\expect{\|\mf Q(t+t_0)\|_2^2 + \|\mf H(t+t_0)\|_2^2~|\mathcal F^{t-1}} - \|\mf Q(t)\|_2^2 - \|\mf H(t)\|_2^2\leq
2\expect{V \sum_{\tau=t}^{t+t_0-1}f^{\tau-1}(\mu)~|\mathcal{F}^{t-1}}\\
+2\sum_{i=1}^LQ_i(t)\expect{\sum_{\tau=t}^{t+t_0-1}g^{\tau-1}_{i}(\mu)}
+ 2\sum_{j=1}^{M} H_j(t)\expect{\sum_{\tau=t}^{t+t_0-1}(\dotp{h_j^{\tau-1}}{\mu}-b_j)}  + C_{V,\alpha,t_0},
\end{multline}
where we use the following two facts: (1) $\mf Q(t),~\mf H(t)\in\mathcal{F}^{t-1}$. (2) $g_i^\tau$ and $h_i^\tau$ are independent of system history $\mathcal{F}^{t-1}$ and thus the conditional expectation equals the expectation.

Note that by definition, $f^t(\mu) = f(\mu,\xi^t)$, and
according to the notation in \eqref{eq:partial-static-prob},
\[
\expect{V \sum_{\tau=t}^{t+t_0-1}f^{\tau-1}(\mu)~|\mathcal{F}^{t-1}}
= \expect{\l.V \mathbb{E}_{\xi}\l[\sum_{\tau=t}^{t+t_0-1}f^{\tau-1}(\mu)\r]~\r|\mathcal{F}^{t-1}}
=Vt_0\expect{\overline{f}^{(t,t_0)}(\mu) ~| \mathcal{F}^{t-1}}.
\]
Furthermore, 
\[
\expect{\sum_{\tau=t}^{t+t_0-1}g^{\tau-1}_{i}(\mu)} = t_0 \overline{g}_i(\mu),
~~~\expect{\sum_{\tau=t}^{t+t_0-1}\dotp{h_j^{\tau-1}}{\mu}} = t_0 \overline{h}_j(\mu).
\]
Substituting these three relations into \eqref{eq:interim-4}, we get
\begin{align}\label{eq:interim-5}
&\expect{\|\mf Q(t+t_0)\|_2^2 + \|\mf H(t+t_0)\|_2^2~|\mathcal F^{t-1}} - \|\mf Q(t)\|_2^2 - \|\mf H(t)\|_2^2 \nonumber\\
\leq& 2Vt_0\expect{ \overline{f}^{(t,t_0)}(\mu)~|\mathcal{F}^{t-1}}
+2t_0\l(\sum_{i=1}^LQ_i(t)\overline{g}_i(\mu)
+ 2\sum_{j=1}^{M} H_j(t)(\overline{h}_j(\mu)-b_j)\r) + C_{V,\alpha,t_0} \nonumber\\
\leq& 2Vt_0\expect{ \overline{f}^{(t,t_0)}(\mu) + 
\sum_{i=1}^L\frac{Q_i(t)}{V}\overline{g}_i(\mu)
+ \sum_{j=1}^{M} \frac{H_j(t)}{V}(\overline{h}_j(\mu) - b_j)~|\mathcal{F}^{t-1}} + C_{V,\alpha,t_0}.
\end{align}
The main idea here, as is mentioned in the proof outline, is to realize that
\[
q^{(t,t_0)}\big(\frac{\mf Q(t)}{V},~\frac{\mf H(t)}{V}\big)=\min_{\mu\in\Delta}~~\overline{f}^{(t,t_0)}(\mu) + 
\sum_{i=1}^L\frac{Q_i(t)}{V}\overline{g}_i(\mu)
+ \sum_{j=1}^{M} \frac{H_j(t)}{V}(\overline{h}_j(\mu)-b_j),
\]
where $q^{(t,t_0)}$ is the Lagrangian dual function defined in \eqref{eq:target-dual} with dual variables 
$[\frac{\mf Q(t)}{V},~\frac{\mf H(t)}{V}]$. This implies if we choose $\mu = \mu_0$ in \eqref{eq:interim-5} as one of the solutions to the above problem, then, we can transform the bound \eqref{eq:interim-5} to \eqref{eq:drift-bound-1} and we finish the proof.
\end{proof}


\subsubsection{Proof of Lemma \ref{lem:bound-dual-function}}
\begin{proof}[Proof of Lemma \ref{lem:bound-dual-function}]
Now, we take $t_0=\sqrt{T}$ and by SELM (Assumption \ref{as:selm}), there exists a solution to the maximization problem 
$$
\Lambda^*:=\text{argmax}_{\lambda,\eta}~~q^{(t,t_0)}(\lambda,\eta).
$$
Let $(\lambda^*,~\eta^*)$ be one of the solutions to this problem. Recall that we define $\overline{c}$ to be the minimum over all $c_0$'s and define $\overline{\ell}$ to be the maximum over all $\ell_0$'s in Lemma \ref{lem:leb} with $t=0,1,2,\cdots, T-1$ and $k=\sqrt{T}$. If 
$\text{dist}\l(\big(\frac{\mf Q(t)}{V},~\frac{\mf H(t)}{V}\big), ~\Lambda^*\r)\geq\overline{\ell}$, then, by Lemma \ref{lem:leb} we have
\begin{align*}
&q^{(t,t_0)}\big(\frac{\mf Q(t)}{V},~\frac{\mf H(t)}{V}\big) \\ 
=& q^{(t,t_0)}\big(\frac{\mf Q(t)}{V},~\frac{\mf H(t)}{V}\big)  - q^{(t,t_0)}(\lambda^*,~\eta^*) +  q^{(t,t_0)}(\lambda^*,~\eta^*)\\
\leq&-\overline{c}\cdot\text{dist}\l(\big(\frac{\mf Q(t)}{V},~\frac{\mf H(t)}{V}\big), ~\Lambda^*\r)
+ q^{(t,t_0)}(\lambda^*,~\eta^*)\\
\leq&
-\overline{c}\cdot\text{dist}\l(\big(\frac{\mf Q(t)}{V},~\frac{\mf H(t)}{V}\big), ~\Lambda^*\r) + \overline{f}^{(t,t_0)}(\mu_0)\\
\leq& -\overline{c}\Big\|\big(\frac{\mf Q(t)}{V},~\frac{\mf H(t)}{V}\big)\Big\|_2 + \overline{c}B + F,
\end{align*}
where the first inequality follows from Lemma \ref{lem:leb}, the second inequality follows from choosing $\mu_0$ as the solution to the following problem 
\[
\min_{\mu\in\Delta}\overline{f}^{t,t_0}(\mu)~~s.t.~~\overline{\mathbf{g}}(\mu)\leq0,~~\overline{\mathbf{h}}(\mu) =\mf b,
\]
and using weak duality (in fact, by Lemma \ref{cor:mfcq}, we know KKT conditions hold for this problem and strong duality holds). The third inequality follows from triangle inequality and the boundedness of Lagrange multipliers $\max_{[\lambda,\mu]\in\mathcal{V}^*}\|[\lambda,\mu]\|_2\leq B$. 

On the other hand, if 
$\text{dist}\l(\big(\frac{\mf Q(t)}{V},~\frac{\mf H(t)}{V}\big), ~\Lambda^*\r)<\overline{\ell}$, then, one has
\begin{align*}
&q^{(t,t_0)}\big(\frac{\mf Q(t)}{V},~\frac{\mf H(t)}{V}\big)\\
=& 
\min_{\mu\in\Delta}~~\overline{f}^{(t,t_0)}(\mu) + 
\sum_{i=1}^L\frac{Q_i(t)}{V}\overline{g}_i(\mu)
+ \sum_{j=1}^{M} \frac{H_j(t)}{V}(\overline{h}_j(\mu)-b_j)\\
=&
\min_{\mu\in\Delta}~~\overline{f}^{(t,t_0)}(\mu) + 
\sum_{i=1}^L\Big(\lambda_i^*\overline{g}_i(\mu) + \dotp{\frac{Q_i(t)}{V} - \lambda_i^*}{\overline{g}_i(\mu)} \Big)
+ \sum_{j=1}^{M} \Big(\mu_j^*\overline{h}_j(\mu)  - b_j  +\dotp{\frac{H_j(t)}{V} - \mu_j^*}{\overline{h}_j(\mu) }\Big)\\
\leq& q^{(t,t_0)}(\lambda^*,~\eta^*)  + \overline\ell\Big(G+\sqrt{\frac{2RH^2}{\beta}}\Big)
\leq F  + \overline\ell\Big(G+\sqrt{\frac{2RH^2}{\beta}}\Big),
\end{align*}
where we choose $(\lambda^*,\mu^*)$ to be a point in $\Lambda^*$ closest to $\big(\frac{\mf Q(t)}{V},~\frac{\mf H(t)}{V}\big)$, 
the first inequality follows from 
\begin{align*}
& \sum_{i=1}^L\dotp{\frac{Q_i(t)}{V} - \lambda_i^*}{\overline{g}_i(\mu)}
 \leq \|\mf Q(t)/V - \lambda^*\|_2\|\overline{\mf g}(\mu)\|_2\leq G\overline{\ell}\\
 &\sum_{j=1}^M \dotp{\frac{H_j(t)}{V} - \mu_j^*}{\overline{h}_j(\mu) }
 \leq \|\mf H(t)/V - \mu^*\|_2\|\overline{\mf h}(\mu)\|_2\leq \sqrt{\frac{2RH^2}{\beta}}\overline{\ell},
\end{align*}
and the second inequality follows from weak duality. Overall, we finish the proof.
\end{proof}

\subsubsection{Proof of Lemma \ref{lem:q-bound}}\label{sec:pf-constraint-2}
Substituting Lemma \ref{lem:bound-dual-function} into \eqref{eq:drift-bound-1}, we get

\begin{align}\label{eq:drift-bound-2}
&\expect{\|\mf Q(t+t_0)\|_2^2 + \|\mf H(t+t_0)\|_2^2~|\mathcal F^{t-1}} - \|\mf Q(t)\|_2^2 - \|\mf H(t)\|_2^2 \nonumber \\
\leq &  C_{V,\alpha,t_0}+
2\Big(F + \overline{\ell}(G+\sqrt{2RH^2/\beta} +\overline{c}) + \overline{c}B \Big)Vt_0 - 2\overline{c}t_0\Big\|\big(\mf Q(t),~\mf H(t)\big)\Big\|_2.
\end{align}
This bound is the key to our analysis. Intuitively, it says if $\|\big(\mf Q(t),~\mf H(t)\big)\|_2$ is very large at certain time slot $t$, then, $\|\big(\mf Q(t+t_0),~\mf H(t+t_0)\big)\|_2$ becomes very small. Since $\|\big(\mf Q(t+t_0),~\mf H(t+t_0)\big)\|_2$ is nonnegative, this means $\|\big(\mf Q(t),~\mf H(t)\big)\|_2$ cannot be too large to start with. 
To transform this intuition into a uniform bound on $\big(\mf Q(t),~\mf H(t)\big)$ over all time slots, we invoke the following drift lemma:

\begin{lemma}[Lemma 5 of \citep{yu2017online}] \label{lm:drift-random-process-bound}
Let $\{Z(t), t\geq 1\}$ be a discrete time stochastic process adapted to a filtration $\{\mathcal{F}(t), t\geq 1\}$ with $Z(0) = 0$ and $\mathcal{F}(0) = \{\emptyset, \Omega\}$.  Suppose there exist integer $t_{0}>0$, real constants $\theta\in \mathbb{R}$, $\delta_{\max} > 0$ and $0< \zeta \leq \delta_{\max}$ such that 
\begin{align} 
\vert Z(t+1) - Z(t) \vert \leq& \delta_{\max}, \label{eq:bounded-difference}\\
\mathbb{E}[ Z(t+t_{0}) - Z(t) | \mathcal{F}(t)] \leq & \left\{ \begin{array}{cc} t_{0}\delta_{\max}, &\text{if}~ Z(t) < \theta \\  -t_{0}\zeta , &\text{if}~ Z(t) \geq \theta \end{array}\right.. \label{eq:stochastic-process-drift-condition}
\end{align}
hold for all $t\in \{1,2,\ldots\}$. Then, $\mathbb{E}[Z(t)] \leq  \theta + t_{0}\frac{4\delta_{\max}^{2}}{\zeta} \log\big[\frac{8\delta_{\max}^{2}}{\zeta^{2}}\big], \forall t\in\{1,2,\ldots\}.$
\end{lemma}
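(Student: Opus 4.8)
The plan is to prove this drift lemma by an exponential (Lyapunov) supermartingale argument in the spirit of Hajek, controlling $\mathbb{E}[e^{rZ(t)}]$ for a carefully chosen small $r>0$ and then extracting a bound on $\mathbb{E}[Z(t)]$ via Jensen's inequality. Because the negative drift is only guaranteed over a window of length $t_0$, the right object to track is the $t_0$-step increment $D := Z(t+t_0)-Z(t)$, which the bounded-difference hypothesis \eqref{eq:bounded-difference} forces to obey $|D|\le t_0\delta_{\max}$.

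First I would fix $r := \zeta/(2t_0\delta_{\max}^2)$ and note that $r t_0\delta_{\max} = \zeta/(2\delta_{\max}) \le 1/2$, since $\zeta\le\delta_{\max}$. Applying the elementary inequality $e^y\le 1+y+y^2$ (valid for $|y|\le 1/2$) with $y=rD$, together with the drift hypothesis on $\mathbb{E}[D\mid\mathcal{F}(t)]$ and the variance bound $\mathbb{E}[D^2\mid\mathcal{F}(t)]\le t_0^2\delta_{\max}^2$ coming from $|D|\le t_0\delta_{\max}$, gives
\[ \mathbb{E}[e^{rD}\mid\mathcal{F}(t)] \le 1 + r\,\mathbb{E}[D\mid\mathcal{F}(t)] + r^2 t_0^2\delta_{\max}^2. \]
On the event $\{Z(t)\ge\theta\}$ the drift condition \eqref{eq:stochastic-process-drift-condition} yields $\mathbb{E}[D\mid\mathcal{F}(t)]\le -t_0\zeta$, so the right side is at most $\rho := 1-\zeta^2/(4\delta_{\max}^2)<1$; this is exactly the value of $\rho$ for which the chosen $r$ is calibrated. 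Multiplying through by $e^{rZ(t)}$ gives $\mathbb{E}[e^{rZ(t+t_0)}\mid\mathcal{F}(t)]\le \rho\,e^{rZ(t)}$ on $\{Z(t)\ge\theta\}$, while on the complementary event the crude estimate $Z(t+t_0)\le Z(t)+t_0\delta_{\max}<\theta+t_0\delta_{\max}$ gives $\mathbb{E}[e^{rZ(t+t_0)}\mid\mathcal{F}(t)]\le e^{r(\theta+t_0\delta_{\max})}$.

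Combining the two cases and taking full expectations, with $a_t := \mathbb{E}[e^{rZ(t)}]$ and $b := e^{r(\theta+t_0\delta_{\max})}$, produces the linear recursion $a_{t+t_0}\le \rho\,a_t + b$. I would then verify that $b/(1-\rho)$ is an invariant upper bound: if $a_t\le b/(1-\rho)$ then $a_{t+t_0}\le \rho b/(1-\rho)+b = b/(1-\rho)$, and the $t_0$ initial values $a_0=1, a_1,\dots,a_{t_0-1}$ (each at most $e^{rs\delta_{\max}}$ by bounded differences and $Z(0)=0$) already lie below $b/(1-\rho)$. Iterating the recursion separately along each residue class modulo $t_0$ then gives $a_t\le b/(1-\rho)$ for all $t$. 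Finally, Jensen's inequality $e^{r\mathbb{E}[Z(t)]}\le a_t$ and taking logarithms yield $\mathbb{E}[Z(t)]\le \theta + t_0\delta_{\max} + r^{-1}\log\frac{1}{1-\rho}$; substituting $r^{-1}=2t_0\delta_{\max}^2/\zeta$ and $1-\rho=\zeta^2/(4\delta_{\max}^2)$, and absorbing the lower-order $t_0\delta_{\max}$ term using $\zeta\le\delta_{\max}$ and $\log(8\delta_{\max}^2/\zeta^2)\ge 1/2$, recovers the stated bound $\theta + t_0\frac{4\delta_{\max}^2}{\zeta}\log\frac{8\delta_{\max}^2}{\zeta^2}$.

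The main obstacle will be the bookkeeping forced by the $t_0$-step (rather than one-step) drift: since the contraction acts only every $t_0$ slots, one must run the argument over each arithmetic-progression subsequence $\{s+nt_0\}_n$ and check that all $t_0$ phase-initial conditions $a_0,\dots,a_{t_0-1}$ are dominated by the fixed point $b/(1-\rho)$, which couples $\theta$ with the bounded-difference growth estimate. A secondary subtlety is calibrating $r$ so that simultaneously $rt_0\delta_{\max}$ is small enough for the quadratic exponential bound to apply \emph{and} the contraction factor $\rho$ stays strictly below one with a logarithm producing the advertised constants; the slack between $8$ and $4$ inside the logarithm is precisely what lets a single clean choice of $r$ close the computation.
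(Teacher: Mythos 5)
The paper itself offers no proof of this lemma: it is imported verbatim as Lemma 5 of \citep{yu2017online}, so the only meaningful comparison is with the proof in that cited source, which is the standard Hajek-style exponential drift argument. Your proposal follows exactly that route and, as far as I can check, closes correctly: with $r=\zeta/(2t_0\delta_{\max}^2)$ you get $r t_0\zeta=\zeta^2/(2\delta_{\max}^2)$ and $r^2t_0^2\delta_{\max}^2=\zeta^2/(4\delta_{\max}^2)$, hence the contraction factor $\rho=1-\zeta^2/(4\delta_{\max}^2)$; the recursion $a_{t+t_0}\le\rho a_t+b$ with $b=e^{r(\theta+t_0\delta_{\max})}$, run along residue classes modulo $t_0$, plus Jensen, gives
$\mathbb{E}[Z(t)]\le\theta+t_0\delta_{\max}+\frac{2t_0\delta_{\max}^2}{\zeta}\log\frac{4\delta_{\max}^2}{\zeta^2}$,
and this is indeed dominated by the advertised bound because
$\frac{4\delta_{\max}^2}{\zeta}\log\frac{8\delta_{\max}^2}{\zeta^2}-\frac{2\delta_{\max}^2}{\zeta}\log\frac{4\delta_{\max}^2}{\zeta^2}
=\frac{2\delta_{\max}^2}{\zeta}\log\frac{16\delta_{\max}^2}{\zeta^2}\ge 2\delta_{\max}\log 16>\delta_{\max}$.
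So your proof is correct and is essentially the same argument that underlies the cited result; the only structural choice you make (working directly with the $t_0$-step increment rather than invoking a one-step drift lemma on each subsampled process) is cosmetic.

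One caveat worth stating explicitly. Your claim that the phase-initial values $a_0,\dots,a_{t_0-1}$ lie below the fixed point $b/(1-\rho)$ amounts to $1-\rho\le e^{r\theta}$, which holds automatically when $\theta\ge0$ but can fail when $\theta$ is very negative, and the restatement in this paper allows $\theta\in\mathbb{R}$. This is a defect of the restatement rather than of your proof: for $\theta$ sufficiently negative the conclusion is simply false (at $t=1$, bounded differences force $Z(1)\ge-\delta_{\max}$, while the claimed upper bound $\theta+t_0\frac{4\delta_{\max}^2}{\zeta}\log\big[\frac{8\delta_{\max}^2}{\zeta^2}\big]$ can be driven below $-\delta_{\max}$). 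The original Lemma 5 of \citep{yu2017online} assumes $\theta>0$, and in this paper's application (Lemma \ref{lem:q-bound}) the threshold is a positive quantity, so your argument covers every case that is actually used; you should just add the hypothesis $\theta\ge0$ (or remark that the hypotheses with threshold $\theta$ imply those with threshold $\max(\theta,0)$) to make the initial-condition step airtight.
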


To apply this lemma,
we set $Z(t) = \| \big(\mf Q(t),~\mf H(t)\big) \|_2$ and we need to check condition \eqref{eq:bounded-difference} and \eqref{eq:stochastic-process-drift-condition}, for which we prove the following lemma:

\begin{proof}[Proof of Lemma \ref{lem:q-bound}]
For condition \eqref{eq:bounded-difference}, we have
\begin{align*}
&\l \vert \| \big(\mf Q(t+1),~\mf H(t+1)\big) \|_2 -  \| \big(\mf Q(t),~\mf H(t)\big) \|_2  \r \vert \\
 \leq &\| \big(\mf Q(t+1) - \mf Q(t),~\mf H(t+1) - \mf H(t)\big) \|_2
 \leq \sqrt{\sum_{i=1}^L(\tilde g_i^t)^2} + \sqrt{\sum_{j=1}^M(\dotp{h_j^t}{\mu^t}-b_j)^2}\\
 \leq &2\Big(G+\sqrt{\frac{2RD_2^2}{\beta}}\Big) + \sqrt{\frac{8RH^2}{\beta}}.  
\end{align*}
On the other hand, for condition \eqref{eq:stochastic-process-drift-condition} we start from \eqref{eq:drift-bound-2}. 
Suppose 
\[
\Big\|\big(\mf Q(t),~\mf H(t)\big)\Big\|_2\geq
\frac{C_{V,\alpha,t_0}+
2\Big(F + \overline{\ell}(G+\sqrt{2RH^2/\beta} +\overline{c}) + \overline{c}B \Big)Vt_0}{\overline c t_0},
\]
then,  \eqref{eq:drift-bound-2} can be rewritten as
\[
\expect{\|\mf Q(t+t_0)\|_2^2 + \|\mf H(t+t_0)\|_2^2~|\mathcal F^{t-1}} - \|\mf Q(t)\|_2^2 - \|\mf H(t)\|_2^2\\
\leq  - \overline c t_0\|\big(\mf Q(t),~\mf H(t)\big)\Big\|_2 + \overline{c}^2t_0^2,
\]
which implies
\[
\expect{\|\mf Q(t+t_0)\|_2^2 + \|\mf H(t+t_0)\|_2^2~|\mathcal F^{t-1}}
\leq \l(\|\big(\mf Q(t),~\mf H(t)\big)\Big\|_2 -  \overline c t_0 \r)^2,
\]
Taking square root from both sides and by Jensen's inequality, we have
\[
\expect{\big\|\big(\mf Q(t+t_{0}),~\mf H(t+t_{0})\big)\Big\|_2~|\mathcal F^{t-1}}\leq \big\|\big(\mf Q(t),~\mf H(t)\big)\Big\|_2 
-  \overline c t_0.
\]

Overall, by Lemma \ref{lm:drift-random-process-bound}, we obtain
\begin{multline*}
\expect{\Big\|\big(\mf Q(t),~\mf H(t)\big)\Big\|_2}
\leq 
\frac{C_{V,\alpha,t_0}+
2\Big(F + \overline{\ell}(G+\sqrt{8RH^2/\beta} +\overline{c}) + \overline{c}B \Big)Vt_0}{\overline c t_0}\\
+ \frac{4t_0\Big(2 (G+\sqrt{2RD_2^2/\beta}) 
+ \sqrt{8RH^2/\beta}\Big)^2}{\overline c}\log\l(\frac{8\l(2(G+\sqrt{\frac{2RD_2^2}{\beta}}) + \sqrt{\frac{8RH^2}{\beta}}\r)^2}{\overline c^2}\r),
\end{multline*}
Taking $V=\sqrt{T}, \alpha = T$ and $t_{0} = \sqrt{T}$ and recalling the definition of $C_{V, \alpha, t_{0}}$ yields:
 \begin{align*}
\expect{\Big\|\big(\mf Q(t),~\mf H(t)\big)\Big\|_2} \leq C^{\prime} + C^{\prime\prime} \sqrt{T}
\end{align*}
where  $C^{\prime} := \frac{2}{\overline{c}}\l(\frac{4RH^2}{\beta} + G^{2}+\frac{2RD_2^2}{\beta}+  \frac{D_1^2}{2\beta}\r)$ and $C^{\prime\prime}:= \frac{2}{\overline{c}}\Big(2F + \frac{3}{2}G^2 + \frac{2RD_2^2}{\beta} + \frac{8RH^2}{\beta} + R + \overline{\ell}(G+\sqrt{8RH^2/\beta} +\overline{c}) + \overline{c}B + 2\big(2 (G+\sqrt{2RD_2^2/\beta}) + \sqrt{8RH^2/\beta}\big)^2 \log\big(\frac{8 (2(G+\sqrt{\frac{2RD_2^2}{\beta}}) + \sqrt{\frac{8RH^2}{\beta}})^2}{\overline c^2}\big)\Big)$ are absolute constants.
\end{proof}


\subsection{Proof of Theorem \ref{main:theorem-simplex}}\label{sec:pf-prob-simplex}
In this section, we present the proof for Theorem \ref{main:theorem-simplex}. The proof takes into account the fact that $\Delta$ is the probability simplex and the effect of pull-away operation $\tilde{\mu}^{t-1} = (1-\theta)\mu^{t-1} + \frac{\theta}{d}\mathbf{1}$. 
Note that in this probability simplex case, we have $\sup_{\mu_1,\mu_2\in\Delta}\|\mu_1- \mu_2\|_{1}\leq 1$, which will be used to replace the frequently used relation $\sup_{\mu_1,\mu_2\in\Delta}\|\mu_1- \mu_2\|\leq \sqrt{\frac{2R}{\beta}}$ in the proof for general cases. Note further that when $\Delta$ is the probability simplex and $D(\mu_{1}, \mu_{2})$ is chosen to be K-L divergence, we do not have a uniform bound $R$ such that $\sup_{\mu_1,\mu_2\in\Delta} D(\mu_{1}, \mu_{2})\leq R$. Fortunately, our analysis does not need such a uniform bound but instead uses a bound on $D(\mu_1,\tilde{\mu}_2)$ where $\tilde{\mu}_{2}$ is in the form of $\tilde{\mu}^{t}$ specified in Algorithm \ref{alg:prob-simplex}.

The following lemma bounds the difference between $D(\mu,\tilde{\mu}^{t-1}) $ and $D(\mu, \mu^{t-1}) $:
\begin{lemma}\label{lem:diff}
Consider any $\mu_1,\mu_2\in\Delta$, and let $\tilde{\mu}_2 = (1-\theta)\mu_2 + \theta\frac{1}{d}\mathbf{1}$, for some 
$\theta\in(0,1]$, then, it follows
\[
D(\mu_1,\tilde{\mu}_2) - D(\mu_1,\mu_2) \leq \theta\log d.
\]
Furthermore, 
$
D(\mu_1,\tilde{\mu}_2)\leq \log(d/\theta).
$
\end{lemma}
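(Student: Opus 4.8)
The plan is to prove both inequalities by direct manipulation of the KL divergence, with the only nontrivial ingredient being the concavity of $\log$. I would treat the two bounds separately, since they use $\tilde{\mu}_2(i) = (1-\theta)\mu_2(i) + \theta/d$ in opposite ways: the first exploits the multiplicative factor $(1-\theta)$, the second the additive floor $\theta/d$.

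For the first inequality I would begin by cancelling the common $\sum_i \mu_1(i)\log\mu_1(i)$ terms to write the difference as a single sum,
\[
D(\mu_1,\tilde{\mu}_2) - D(\mu_1,\mu_2) = \sum_{i=1}^d \mu_1(i)\log\frac{\mu_2(i)}{\tilde{\mu}_2(i)}.
\]
If $\mu_2$ had a zero coordinate where $\mu_1$ is positive, then $D(\mu_1,\mu_2)=+\infty$ and the claimed bound is trivial, so I may assume $\mu_2(i)>0$ wherever $\mu_1(i)>0$ (in Algorithm \ref{alg:prob-simplex} all coordinates of $\mu^{t-1}$ are strictly positive anyway). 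The key observation is that $\tilde{\mu}_2(i)/\mu_2(i) = (1-\theta)\cdot 1 + \theta\cdot\frac{1/d}{\mu_2(i)}$ is a convex combination of $1$ and $\frac{1/d}{\mu_2(i)}$, so concavity of $\log$ gives $\log\frac{\tilde{\mu}_2(i)}{\mu_2(i)} \geq \theta\log\frac{1/d}{\mu_2(i)}$, i.e. $\log\frac{\mu_2(i)}{\tilde{\mu}_2(i)} \leq \theta\log(d\,\mu_2(i))$ for each $i$. Summing against $\mu_1(i)$ and splitting $\log(d\,\mu_2(i)) = \log d + \log\mu_2(i)$, the bound $\theta\log d$ follows from $\sum_i \mu_1(i) = 1$ together with $\sum_i \mu_1(i)\log\mu_2(i)\leq 0$ (each $\mu_2(i)\leq 1$).

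For the second inequality I would use only the elementary lower bound $\tilde{\mu}_2(i) = (1-\theta)\mu_2(i) + \theta/d \geq \theta/d$, valid since $\mu_2(i)\geq 0$, which yields $-\log\tilde{\mu}_2(i)\leq\log(d/\theta)$. Then
\[
D(\mu_1,\tilde{\mu}_2) = \sum_{i=1}^d \mu_1(i)\log\mu_1(i) - \sum_{i=1}^d\mu_1(i)\log\tilde{\mu}_2(i) \leq \sum_{i=1}^d\mu_1(i)\log\mu_1(i) + \log(d/\theta),
\]
and the first sum equals the negative entropy of $\mu_1$, which is nonpositive, giving $D(\mu_1,\tilde{\mu}_2)\leq\log(d/\theta)$.

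I expect no real obstacle here; the only step requiring care is the concavity argument in the first part (ensuring the convex-combination decomposition of $\tilde{\mu}_2(i)/\mu_2(i)$ is correct), and the degenerate case where $\mu_2$ has vanishing coordinates, which is dispatched by the triviality remark above.
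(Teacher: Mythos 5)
Your proof is correct and follows essentially the same route as the paper: both parts rest on concavity of $\log$ applied to the convex combination defining $\tilde{\mu}_2(i)$ (your version, applied to $\tilde{\mu}_2(i)/\mu_2(i)$ as a mixture of $1$ and $\tfrac{1/d}{\mu_2(i)}$, is just a rescaling of the paper's inequality $\log\tilde{\mu}_2(i)\geq(1-\theta)\log\mu_2(i)+\theta\log(1/d)$), together with the floor $\tilde{\mu}_2(i)\geq\theta/d$ and nonpositivity of $\sum_i\mu_1(i)\log\mu_2(i)$ and of the negative entropy. Your explicit handling of the degenerate case $\mu_2(i)=0$ is a small tidiness bonus the paper omits, but it does not change the argument.
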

\begin{proof}[Proof of Lemma \ref{lem:diff}]
\begin{align*}
&D(\mu_1,\tilde{\mu}_2) - D(\mu_1,\mu_2)\\
=&\sum_{i=1}^d\mu_1(i)\l( \log\frac{\mu_1(i)}{\tilde\mu_2(i)} - \log\frac{\mu_1(i)}{\mu_2(i)} \r)\\
=&\sum_{i=1}^d\mu_1(i)\log\frac{\mu_2(i)}{\tilde\mu_2(i)}\\
=&\sum_{i=1}^d\mu_1(i)\l(\log\mu_2(i) - \log\l((1-\theta)\mu_2(i) + \theta\frac{1}{d}\mathbf{1}\r)\r)\\
\leq&\sum_{i=1}^d\mu_1(i)\l(\log\mu_2(i) - (1-\theta)\log\mu_2(i) - \theta\log \frac{1}{d}\r)\\
=&\theta\sum_{i=1}^d\mu_1(i)\l(\log\mu_2(i)  + \log d\r) \leq \theta\log d,
\end{align*}
where the first inequality follows from the concavity of $\log$ function. Furthermore, for the second inequality, we have
\begin{multline*}
D(\mu_1,\tilde{\mu}_2)=\sum_{i=1}^d\mu(i)\log\frac{\mu_1(i)}{\tilde{\mu}_2(i)}
=\sum_{i=1}^d\mu(i)\log\frac{\mu_1(i)}{(1-\theta)\mu_2^{t-1}(i) + \theta/d}\\
\leq -\sum_{i=1}^d\mu_1(i)\log((1-\theta)\mu_2^{t-1}(i) + \theta/d)\leq \log(d/\theta).
\end{multline*}
\end{proof}

\subsubsection{Regret bound}
First of all, by the same proof as that of Lemma \ref{lem:strong-convex-queue} one can show the following:
\begin{align}
&V\dotp{\nabla f^{t-1}(\mu^{t-1})}{\mu^t-\mu^{t-1}}+\Delta(t) + \alpha D(\mu^t, \tilde\mu^{t-1})\nonumber\\
 \leq&  V(f^{t-1}(\mu) - f^{t-1}(\mu^{t-1})) + \sum_{i=1}^LQ_i(t)g^{t-1}_{i}(\mu) + \sum_{j=1}^{M} H_j(t)(\dotp{h_j^{t-1}}{\mu}-b_j)
 \nonumber\\
 &+ \alpha D(\mu,\tilde\mu^{t-1}) - \alpha D(\mu, \mu^t)
 + H^2 + G^2+ D_2^2.  \label{eq:interim-11}
\end{align}
Furthermore, similar to that of Lemma \ref{lem:lhs-lower-bound}, we have
\begin{align}
&V\dotp{\nabla f^{t-1}(\mu^{t-1})}{\mu^t-\mu^{t-1}} + \alpha D(\mu^t, \tilde{\mu}^{t-1}) \nonumber\\
\geq& V\dotp{\nabla f^{t-1}(\mu^{t-1})}{\mu^t-\tilde{\mu}^{t-1}} + \alpha D(\mu^t, \tilde{\mu}^{t-1})
- V\theta\|\nabla f^{t-1}(\mu^{t-1})\|_{\infty}  \nonumber\\
\geq&V\dotp{\nabla f^{t-1}(\mu^{t-1})}{\mu^t-\tilde{\mu}^{t-1}} + \frac{\alpha}{2} \|\mu^t- \tilde{\mu}^{t-1}\|_1^2
- V\theta\|\nabla f^{t-1}(\mu^{t-1})\|_{\infty} \nonumber\\
\geq&-V \|\nabla f^{t-1}(\mu^{t-1})\|_{\infty} \|\mu^t-\tilde{\mu}^{t-1}\|_1 + \frac{\alpha}{2} \|\mu^t- \tilde{\mu}^{t-1}\|_1^2
- V\theta\|\nabla f^{t-1}(\mu^{t-1})\|_{\infty} \nonumber\\
\geq& -V\l(\frac{\alpha}{2V}\|\mu^t- \tilde{\mu}^{t-1}\|_1^2 +  \frac{V}{2\alpha}\|\nabla f^{t-1}(\mu^{t-1})\|_{\infty}^2\r)
+ \frac{\alpha}{2} \|\mu^t- \tilde{\mu}^{t-1}\|_1^2 - V\theta\|\nabla f^{t-1}(\mu^{t-1})\|_{\infty} \nonumber\\
=& - (\frac{V^{2}}{2\alpha}\|\nabla f^{t-1}(\mu^{t-1})\|_{\infty}^2 +V\theta\|\nabla f^{t-1}(\mu^{t-1})\|_{\infty})
\geq - \frac{VD_1^2}{2\alpha} - V\theta D_1.
\label{eq:res-bound-1}
\end{align}
Substituting \eqref{eq:res-bound-1} into \eqref{eq:interim-11} gives
\begin{multline}\label{eq:interim-21}
\Delta(t) + V(f^{t-1}(\mu^{t-1}) - f^{t-1}(\mu)) \leq 
\sum_{i=1}^LQ_i(t)g^{t-1}_{i}(\mu) + \sum_{j=1}^{M} H_j(t)(\dotp{h_j^{t-1}}{\mu}-b_j)\\
+H^2 + G^2+D_2^2 +  \frac{V^{2}}{2\alpha}D_1^2 +V\theta D_1 
+ \alpha D(\mu,\tilde\mu^{t-1}) - \alpha D(\mu, \mu^t).
\end{multline}
Using Lemma \ref{lem:diff}, we get
\begin{multline*}
\Delta(t) + V(f^{t-1}(\mu^{t-1}) - f^{t-1}(\mu)) \leq 
\sum_{i=1}^LQ_i(t)g^{t-1}_{i}(\mu) + \sum_{j=1}^{M} H_j(t)(\dotp{h_j^{t-1}}{\mu} - b_j)\\
+H^2 + G^2+D_2^2 +  \frac{V^{2}}{2\alpha}D_1^2 +V\theta D_1 + \alpha\theta\log d
+ \alpha D(\mu,\mu^{t-1}) - \alpha D(\mu, \mu^t).
\end{multline*}
The rest follows from the same argument as that of Section \ref{sec:reg-analysis} after \eqref{eq:roadmap-2} and we omit the details for brevity.

\subsubsection{Constraint violations}
Similar as before, we start with the following lemma:
\begin{lemma}\label{lem:simplex-constraint}
The updating rule \eqref{eq:Q-update} and \eqref{eq:H-update} delivers the following constraint violation bounds:
\begin{align*}
&\mathbb{E}\l\|\l[\frac1T\sum_{t=0}^{T-1}\overline{\mathbf{g}}(\mu^t)\r]_+\r\|_2
\leq \frac{\expect{\|\mf Q(t)\|_2}}{T} + \frac{2D_2}{\alpha}\l(VD_1 + D_2\expect{\|\mf Q(t)\|_2} + H\expect{\|\mf H(t)\|_2}\r) 
+ D_2\theta,\\
&\mathbb{E}\l\|\frac1T\sum_{t=0}^{T-1}\overline{\mf h}(\mu^t) - \mf b\r\|_2
\leq \frac{\expect{\|\mf H(t)\|_2}}{T} +  \frac{2H}{\alpha}\l(VD_1 + D_2\expect{\|\mf Q(t)\|_2} + H\expect{\|\mf H(t)\|_2}\r)
+H\theta .
\end{align*}
\end{lemma}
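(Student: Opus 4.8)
The plan is to mirror the two-stage structure of the general-case argument (Lemma \ref{lem:supp-constraint} followed by Lemma \ref{lem:constraint}), replacing the norms by the pair $\|\cdot\|=\|\cdot\|_1$, $\|\cdot\|_*=\|\cdot\|_\infty$ and carefully accounting for the mixing step $\tilde{\mu}^{t-1}=(1-\theta)\mu^{t-1}+\frac{\theta}{d}\mathbf 1$. The first stage is essentially unchanged: since the dual recursions \eqref{eq:Q-update} and \eqref{eq:H-update} are identical to those in Algorithm \ref{alg:new-alg}, the telescoping argument from the proof of Lemma \ref{lem:supp-constraint} carries over verbatim. Applying Hölder's inequality with the dual norm $\|\cdot\|_\infty$ and Assumption \ref{as:basic}(1), together with Jensen's inequality to pass from the realized constraint functions to their conditional means $\overline{\mathbf{g}},\overline{\mathbf{h}}$, yields the intermediate bounds
\[
\mathbb{E}\l\|\l[\tfrac1T\sum_{t=0}^{T-1}\overline{\mathbf{g}}(\mu^t)\r]_+\r\|_2\leq \frac{\expect{\|\mf Q(t)\|_2}}{T}+\frac{D_2}{T}\sum_{t=0}^{T-1}\expect{\|\mu^{t+1}-\mu^t\|_1},
\]
and the analogous inequality for $\overline{\mathbf{h}}$ with $D_2$ replaced by $H$.

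Hence it remains to bound $\|\mu^t-\mu^{t-1}\|_1$, which is where the mixing enters. First I would split by the triangle inequality $\|\mu^t-\mu^{t-1}\|_1\leq \|\mu^t-\tilde{\mu}^{t-1}\|_1+\|\tilde{\mu}^{t-1}-\mu^{t-1}\|_1$. The second term is immediate from the definition of $\tilde{\mu}^{t-1}$: $\|\tilde{\mu}^{t-1}-\mu^{t-1}\|_1=\theta\,\|\tfrac1d\mathbf 1-\mu^{t-1}\|_1\leq\theta$ using the $\ell_1$-diameter bound of the simplex. For the first term, note that $\mu^t$ solves \eqref{eq:mu-update} with Bregman center $\tilde{\mu}^{t-1}$ and, being produced by the exponential-gradient update, lies in $\Delta^o$; applying the pushback property (Lemma \ref{lem:strong-convex}) with $y=\tilde{\mu}^{t-1}$, $x^*=\mu^t$, $f(x)=\dotp{p}{x}$ where $p=V\nabla f^{t-1}(\mu^{t-1})+\sum_i Q_i(t)\nabla g_i^{t-1}(\mu^{t-1})+\sum_j H_j(t)h_j^{t-1}$, and comparison point $z=\tilde{\mu}^{t-1}$ gives the self-bounding inequality $\alpha D(\mu^t,\tilde{\mu}^{t-1})\leq \dotp{p}{\tilde{\mu}^{t-1}-\mu^t}$. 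Bounding the right-hand side termwise via Hölder and Cauchy--Schwarz as in the proof of Lemma \ref{lem:constraint}, together with Pinsker's inequality $D(\mu^t,\tilde{\mu}^{t-1})\geq\tfrac12\|\mu^t-\tilde{\mu}^{t-1}\|_1^2$ (i.e. strong convexity with $\beta=1$), yields $\|\mu^t-\tilde{\mu}^{t-1}\|_1\leq \frac{2}{\alpha}\big(VD_1+D_2\|\mf Q(t)\|_2+H\|\mf H(t)\|_2\big)$; the factor $2$ comes precisely from discarding the nonnegative term $D(\tilde{\mu}^{t-1},\mu^t)$ and keeping only one Pinsker bound.

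Combining the two stages, taking expectations, and averaging over $t$ (folding the per-slot bound on $\expect{\|\mf Q(t)\|_2},\expect{\|\mf H(t)\|_2}$ into the uniform factor) produces the claimed middle term $\frac{2D_2}{\alpha}(VD_1+D_2\expect{\|\mf Q(t)\|_2}+H\expect{\|\mf H(t)\|_2})$ together with the additive $D_2\theta$, and the $\overline{\mathbf{h}}$ bound is obtained identically after replacing $D_2$ by $H$. I expect the main obstacle to be that, unlike the general case, the KL divergence admits no uniform diameter bound $R$, so none of the steps may invoke Assumption \ref{as:basic}(3): the strong-convexity estimates must instead run through Pinsker's inequality and the $\ell_1$-diameter of the simplex, and the mixing step plays a dual role --- it both forces $\mu^t,\tilde{\mu}^{t-1}\in\Delta^o$ so that the pushback manipulations with KL are legitimate, and it is the source of the extra $O(\theta)$ terms that must be tracked and ultimately controlled by the choice $\theta=1/T$.
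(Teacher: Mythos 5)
Your proposal is correct and takes essentially the same route as the paper's own proof: stage one reuses Lemma \ref{lem:supp-constraint} with the $\|\cdot\|_1/\|\cdot\|_\infty$ norm pair, and stage two bounds $\|\mu^t-\tilde\mu^{t-1}\|_1$ by applying the pushback property (Lemma \ref{lem:strong-convex}) at the comparison point $z=\tilde\mu^{t-1}$, then Hölder/Cauchy--Schwarz termwise as in Lemma \ref{lem:constraint} and Pinsker's inequality, with the triangle inequality absorbing the mixing offset $\|\tilde\mu^{t-1}-\mu^{t-1}\|_1\leq\theta$. Your bookkeeping (discarding the nonnegative term $D(\tilde\mu^{t-1},\mu^t)$ and invoking a single Pinsker bound with modulus $\tfrac12$) is in fact marginally cleaner than the paper's version, which retains both divergences and solves a quadratic inequality in $\|\mu^t-\tilde\mu^{t-1}\|_1$, ending with $3\theta D_2$ rather than the stated $D_2\theta$; your constants match the lemma statement exactly.
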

\begin{proof}[Proof of Lemma \ref{lem:simplex-constraint}]
Using Lemma \ref{lem:supp-constraint}, it is enough to bound the difference $\expect{\|\mu^{t+1} - \mu^t\|_1}$. For this, 
applying Lemma \ref{lem:strong-convex} by setting $y = \mu^{t-1}$, $x^* = \mu^t$, and  $f(x) =\dotp{x}{p}$ with
$$
p = V\nabla f^{t-1}(\mu^{t-1}) + \sum_{i=1}^{L} Q_i(t)\nabla g_i^{t-1}(\mu^{t-1}) 
 + \sum_{j=1}^{M} H_j(t)h_j^{t-1},
$$ 
we have
\begin{multline}\label{eq:interim-00}
 \dotp{V\nabla f^{t-1}(\mu^{t-1}) + \sum_{i=1}^{L} Q_i(t)\nabla g_i^{t-1}(\mu^{t-1}) 
 + \sum_{j=1}^{M} H_i(t)h_j^{t-1}}{\mu^t} + \alpha D(\mu_t,\tilde{\mu}^{t-1}) \\
 \leq
  \dotp{V\nabla f^{t-1}(\mu^{t-1}) + \sum_{i=1}^{L} Q_i(t)\nabla g_i^{t-1}(\mu^{t-1}) 
 + \sum_{j=1}^{M} H_j(t)h_j^{t-1}}{\mu} + \alpha (D(\mu,\tilde{\mu}^{t-1}) - D(\mu,\mu^t)).
\end{multline}
Taking $\mu=\tilde\mu^{t-1}$ in  \eqref{eq:interim-00} gives,
\begin{align*}
&V\dotp{\nabla f^{t-1}(\mu^{t-1})}{\mu^t}
+ \sum_{i=1}^{L} Q_i(t)\dotp{\nabla g_i^{t-1}(\mu^{t-1})}{\mu^t}
+  \sum_{j=1}^{M} H_j(t)\dotp{h_j^{t-1}}{\mu^t}  + \alpha D(\mu^t, \tilde{\mu}^{t-1})\nonumber\\
 \leq&  V\dotp{\nabla f^{t-1}(\mu^{t-1})}{\tilde\mu^{t-1}} + \sum_{i=1}^LQ_i(t)\dotp{\nabla g^{t-1}_{i}(\mu^{t-1})}{\tilde\mu^{t-1}} + \sum_{j=1}^{M} H_j(t)\dotp{h_j^{t-1}}{\tilde\mu^{t-1}}\\
 & - \alpha D(\tilde\mu^{t-1}, \mu^t).
\end{align*}
By the fact that 
\begin{align*}
&\dotp{\nabla f^{t-1}(\mu^{t-1})}{\tilde\mu^{t-1}-\mu^t} 
\leq \|\nabla f^{t-1}(\mu^{t-1})\|_\infty\|\mu^t-\tilde\mu^{t-1}\|_1\leq D_1\|\mu^t-\tilde\mu^{t-1}\|_1\\
&\sum_{i=1}^LQ_i(t)\dotp{\nabla g^{t-1}_{i}(\mu^{t-1}) }{\tilde\mu^{t-1}-\mu^t}\\
&\leq \|\mf Q(t)\|_2\sqrt{\sum_{i=1}^L(\|\nabla g_i(\mu^{t-1})\|_\infty\|\mu^t-\tilde\mu^{t-1}\|_1)^2}\leq D_2\|\mf Q(t)\|_2
\|\mu^t-\tilde\mu^{t-1}\|_1, \\
&\sum_{j=1}^{M} H_j(t)\dotp{h_j^{t-1}}{\tilde\mu^{t-1}-\mu^t}
\leq \|\mf H(t)\|_2\sqrt{\sum_{j=1}^M(\|h_j^{t-1}\|_{\infty}\|\mu^t-\tilde\mu^{t-1}\|_1)^2}\leq  H\|\mf H(t)\|_2\|\mu^t-\tilde\mu^{t-1}\|_1,
\end{align*}
We get
\begin{align*}
D(\mu^t, \tilde{\mu}^{t-1}) + D(\tilde\mu^{t-1}, \mu^t)
\leq \frac{1}{\alpha}\l(VD_1 + D_2\|\mf Q(t)\|_2 + H\|\mf H(t)\|_2\r)\|\mu^t-\tilde\mu^{t-1}\|_1.
\end{align*}
By Pinsker's inequality, we have 
$$
D(\mu^t, \tilde{\mu}^{t-1}) + D(\tilde\mu^{t-1}, \mu^t)
\geq \|\mu^t - \mu^{t-1}\|_1^2 
$$
Thus, it follows,
\begin{align*}
\|\mu^t - \tilde\mu^{t-1}\|_1^2
\leq 2\theta^{2}+ \frac{1}{\alpha}\l(VD_1 + D_2\|\mf Q(t)\|_2 + H\|\mf H(t)\|_2\r)\|\mu^t-\tilde\mu^{t-1}\|_1.
\end{align*}
Solving the above quadratic inequality
\begin{align*}
\|\mu^t - \tilde\mu^{t-1}\|_1
\leq \frac{2}{\alpha}\l(VD_1 + D_2\|\mf Q(t)\|_2 + H\|\mf H(t)\|_2\r) + 2\theta,
\end{align*}
which implies
\begin{align*}
\|\mu^t - \mu^{t-1}\|_1
\leq \frac{2}{\alpha}\l(VD_1 + D_2\|\mf Q(t)\|_2 + H\|\mf H(t)\|_2\r) + 3\theta,
\end{align*}

Taking the expectation from both sides and subtracting this bound into Lemma \ref{lem:supp-constraint} results in
\[
\mathbb{E}\l\|\l[\frac1T\sum_{t=0}^{T-1}\overline{\mathbf{g}}(\mu^t)\r]_+\r\|_2
\leq \frac{\expect{\|\mf Q(t)\|_2}}{T} + 3\theta D_{2} + \frac{2VD_{1}D_{2}}{\alpha}+\frac{1}{T} \sum_{t=0}^{T-1}\frac{2D_2}{\alpha}\l(D_2\expect{\|\mf Q(t)\|_2} + H\expect{\|\mf H(t)\|_2}\r)
\]
One can prove the bound on $\mathbb{E}\l\|\frac1T\sum_{t=0}^{T-1}\overline{\mf h}(\mu^t) - \mf b\r\|_2$ with exactly the same computation and we omit the proof.
\end{proof}

Now, by Lemma \ref{lem:simplex-constraint} it is enough to bound $\mf Q(t)$ and $\mf H(t)$, for which we have the following lemma:

\begin{lemma}\label{lem:t0-drift-0}
Consider the $t_0$ slots drift for some positive integer $t_0$, then we have
\begin{multline}\label{eq:interim-30}
\frac{\|\mf Q(t+t_0)\|_2^2 + \|\mf H(t+t_0)\|_2^2 - \|\mf Q(t)\|_2^2 - \|\mf H(t)\|_2^2}{2}\leq
V \sum_{\tau=t}^{t+t_0-1}f^{\tau-1}(\mu)\\
\sum_{i=1}^LQ_i(t)\sum_{\tau=t}^{t+t_0-1}g^{\tau-1}_{i}(\mu) 
+ \sum_{j=1}^{M} H_j(t)\sum_{\tau=t}^{t+t_0-1}(\dotp{h_j^{\tau-1}}{\mu} - b_j) +\frac{1}{2}\hat{C}_{V,\alpha,t_{0}},
\end{multline}
where $$\hat{C}_{V,\alpha,t_0}:= 2\big(H^{2}+\frac{3}{2}G^{2}+D_{2}^{2}\big)t_{0}^{2}
 +2\big(H^2 + G^2+D_2^2 +  \frac{V^{2}}{2\alpha}D_1^2 +V\theta D_1+\alpha\theta\log d\big)t_0
+ 2\alpha \log(d/\theta)$$
\end{lemma}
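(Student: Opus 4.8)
The plan is to mirror the proof of the general $t_0$-step drift (Lemma \ref{lem:t0-drift}), but to start from the simplex-specific per-slot inequality \eqref{eq:interim-21}, which already packages the DPP bound \eqref{eq:interim-11} together with the lower bound \eqref{eq:res-bound-1}. Two systematic replacements are needed throughout. First, wherever the general argument invoked $\sup_{\mu_1,\mu_2\in\Delta}\|\mu_1-\mu_2\|^2\le 2R/\beta$, I would instead use the simplex bound $\sup_{\mu_1,\mu_2\in\Delta}\|\mu_1-\mu_2\|_1\le 1$ together with the dual norm $\|\cdot\|_\ast=\|\cdot\|_\infty$ and the estimates $\sum_i\|\nabla g_i\|_\infty^2\le D_2^2$, $\sum_j\|h_j\|_\infty^2\le H^2$ from Assumption \ref{as:basic}. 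Second, wherever a uniform Bregman bound $D(\cdot,\cdot)\le R$ was used, I would instead appeal to Lemma \ref{lem:diff}, since for KL divergence no such uniform bound exists.

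With that setup, I would sum \eqref{eq:interim-21} over $\tau$ from $t$ to $t+t_0-1$. The left-hand drift terms telescope to $\bigl(\|\mf Q(t+t_0)\|_2^2-\|\mf Q(t)\|_2^2\bigr)/2+\bigl(\|\mf H(t+t_0)\|_2^2-\|\mf H(t)\|_2^2\bigr)/2$, and the objective term $V\sum_\tau f^{\tau-1}(\mu^{\tau-1})$ is controlled using $f^{\tau-1}(\mu^{\tau-1})\ge -F$ (Assumption \ref{as:basic}) and folded into the additive constant. To pass from the running multipliers $Q_i(\tau),H_j(\tau)$ that appear after summing to the fixed $Q_i(t),H_j(t)$ demanded by \eqref{eq:interim-30}, I would split $Q_i(\tau)=Q_i(t)+(Q_i(\tau)-Q_i(t))$ and likewise for $H_j$, and bound the increment cross-terms exactly as in \eqref{eq:inter-bound-g}--\eqref{eq:inter-bound-h}: since each one-step increment is $|\tilde g_i^{\tau'}|$ or $|\dotp{h_j^{\tau'-1}}{\mu^{\tau'}}-b_j|$, the telescoped increments are squared via $ab\le(a^2+b^2)/2$, now using $\|\mu^{\tau'}-\mu^{\tau'-1}\|_1\le 1$ in place of $2R/\beta$. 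This produces the $t_0^2$-order term $(H^2+\tfrac32 G^2+D_2^2)t_0^2$.

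The main obstacle is the Bregman-divergence telescoping, which is exactly where the mixing step $\tilde\mu^{t-1}=(1-\theta)\mu^{t-1}+\frac{\theta}{d}\mathbf 1$ earns its keep. After summing, the divergence contribution is $\sum_{\tau=t}^{t+t_0-1}\alpha\bigl(D(\mu,\tilde\mu^{\tau-1})-D(\mu,\mu^\tau)\bigr)$, which does \emph{not} telescope directly because $\tilde\mu^{\tau-1}\neq\mu^{\tau-1}$. The naive fix of converting every $\tilde\mu^{\tau-1}$ back to $\mu^{\tau-1}$ would leave the leading term $\alpha D(\mu,\mu^{t-1})$ uncontrolled, since for KL divergence $\mu^{t-1}$ may lie arbitrarily close to the simplex boundary and make $D(\mu,\mu^{t-1})$ blow up. Instead I would keep the very first term as $\alpha D(\mu,\tilde\mu^{t-1})$ and apply only the first part of Lemma \ref{lem:diff}, namely $D(\mu,\tilde\mu^{s})\le D(\mu,\mu^{s})+\theta\log d$, to the interior indices $s=t,\dots,t+t_0-2$; then the surviving terms $\alpha D(\mu,\mu^{\tau-1})-\alpha D(\mu,\mu^\tau)$ genuinely telescope, the tail $-\alpha D(\mu,\mu^{t+t_0-1})\le 0$ is discarded, and the leading $\alpha D(\mu,\tilde\mu^{t-1})$ is bounded by $\alpha\log(d/\theta)$ via the second part of Lemma \ref{lem:diff}. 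Collecting the $\theta\log d$ corrections (one per interior slot, yielding the $\alpha\theta\log d\cdot t_0$ contribution) together with the per-slot constants $H^2+G^2+D_2^2+\frac{V^2}{2\alpha}D_1^2+V\theta D_1$ from \eqref{eq:interim-21} and the cross-term bounds then assembles the right-hand side of \eqref{eq:interim-30} with the stated constant $\hat C_{V,\alpha,t_0}$, completing the argument.
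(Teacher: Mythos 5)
Your proposal is correct and follows essentially the same route as the paper's own proof: sum the per-slot bound \eqref{eq:interim-21} over the window, absorb $V\sum_\tau f^{\tau-1}(\mu^{\tau-1})$ via $|f^t|\leq F$, split $Q_i(\tau),H_j(\tau)$ into $Q_i(t),H_j(t)$ plus increments bounded as in \eqref{eq:inter-bound-g}--\eqref{eq:inter-bound-h} with $\|\mu^{\tau}-\mu^{\tau-1}\|_1\leq 1$, and handle the non-telescoping KL terms exactly as the paper does, keeping $\alpha D(\mu,\tilde\mu^{t-1})\leq\alpha\log(d/\theta)$ and paying $\theta\log d$ per interior slot via Lemma \ref{lem:diff}. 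Your treatment of the divergence telescoping and of the sign of the $f^{\tau-1}(\mu^{\tau-1})$ term is, if anything, stated more carefully than in the paper's write-up.
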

\begin{proof}[Proof of Lemma \ref{lem:t0-drift-0}]
First of all, summing both sides of \eqref{eq:interim-21} from $\tau=t$ to $\tau = t+t_0-1$ gives
\begin{multline}\label{eq:drift-10}
\frac{\|\mf Q(t+t_0)\|_2^2 + \|\mf H(t+t_0)\|_2^2 - \|\mf Q(t)\|_2^2 - \|\mf H(t)\|_2^2}{2}  \\
\leq \sum_{\tau=t}^{t+t_0-1}\sum_{i=1}^LQ_i(\tau)g^{\tau-1}_{i}(\mu) 
+ \sum_{\tau=t}^{t+t_0-1}\sum_{j=1}^{M} H_j(\tau)(\dotp{h_j^{\tau-1}}{\mu}-b_j)
+V \sum_{\tau=t}^{t+t_0-1}(f^{\tau-1}(\mu^{\tau-1}) - f^{\tau-1}(\mu))\\
+\l(H^2 + G^2+2D_2^2 +  \frac{V^{2}}{2\alpha}D_1^2 +V\theta D_1\r)t_0
+ \alpha D(\mu,\tilde{\mu}^{t-1}) - \alpha D(\mu, \mu^{t+t_0-1})\\
+\alpha\sum_{\tau =t+1}^{t+t_0-1}(D(\mu,\tilde{\mu}^{\tau-1}) - D(\mu,\mu^{\tau-1})).
\end{multline}
By Lemma \ref{lem:diff}, one has
\[
\alpha\sum_{\tau =t+1}^{t+t_0-1}(D(\mu,\tilde{\mu}^{\tau-1}) - D(\mu,\mu^{\tau-1}))\leq t_0\alpha\theta\log d.
\]
and 
\[
\alpha D(\mu,\tilde{\mu}^{t-1})\leq \alpha\log(d/\theta),
\]
Thus, substituting these two bounds into \eqref{eq:drift-10} gives
\begin{multline}\label{eq:drift-20}
\frac{\|\mf Q(t+t_0)\|_2^2 + \|\mf H(t+t_0)\|_2^2 - \|\mf Q(t)\|_2^2 - \|\mf H(t)\|_2^2}{2} \leq \\
\sum_{\tau=t}^{t+t_0-1}\sum_{i=1}^LQ_i(\tau)g^{\tau-1}_{i}(\mu) 
+ \sum_{\tau=t}^{t+t_0-1}\sum_{j=1}^{M} H_j(\tau)(\dotp{h_j^{\tau-1}}{\mu} - b_j)
+V \sum_{\tau=t}^{t+t_0-1}(f^{\tau-1}(\mu^{\tau-1}) - f^{\tau-1}(\mu))\\
+\l(H^2 + G^2+2D_2^2 +  \frac{V^{2}}{2\alpha}D_1^2 +V\theta D_1+\alpha\theta\log d\r)t_0
+ \alpha\log(d/\theta).
\end{multline}
Furthermore, following the steps to obtain \eqref{eq:inter-bound-g} and \eqref{eq:inter-bound-h} by invoking $ \Vert \mu^{t} - \mu^{{t-1}} \Vert_{1} \leq 1$, we have
\begin{align*}
&\sum_{\tau=t}^{t+t_0-1}\sum_{j=1}^M\l(H_i(\tau)- H_j(t)\r)  \dotp{h_i^{\tau-1}}{\mu^\tau} 
\leq t_0^{2}H^2.\\
&\sum_{\tau=t}^{t+t_0-1}\sum_{i=1}^L\l(Q_i(\tau)- Q_i(t)\r)g^{\tau-1}_{i}(\mu) \leq t_0\l(\frac{3}{2G^2} + D_2^2\r), 
\end{align*}
and $V\sum_{\tau=t}^{t+t_0-1}f^{\tau-1}(\mu^{\tau-1}) \leq t_0VF$. Substituting these three bounds into \eqref{eq:drift-20} and recalling that $$\hat{C}_{V,\alpha,t_0}= 2\big(H^{2}+\frac{3}{2}G^{2}+D_{2}^{2}\big)t_{0}^{2}
 +2\big(H^2 + G^2+D_2^2 +  \frac{V^{2}}{2\alpha}D_1^2 +V\theta D_1+\alpha\theta\log d\big)t_0
+ 2\alpha \log(d/\theta)$$ gives the final bound.
\end{proof}

Using the previous bound, one can prove the following lemma:
\begin{lemma}\label{lem:q-bound-0}
If we take $V=\sqrt{T}, \alpha = T, t_{0} = T, \theta = 1/T$ in Algorithm \ref{alg:prob-simplex}, then the quantity  $\| \big(\mf Q(t),~\mf H(t)\big) \|_2$ satisfies the following conditions:
\begin{equation}
\expect{\Big\|\big(\mf Q(t),~\mf H(t)\big)\Big\|_2}
\leq  \hat{C}^{\prime} + \hat{C}^{\prime\prime} \sqrt{T} + \frac{2\log(d)}{\overline{c}} + \frac{2}{\overline{c}} \sqrt{T}\log{Td},
\end{equation}
where $\hat{C}^{\prime} = \frac{2}{\overline{c}}\Big( H^{2}+G^{2} + D_{2}^{2} +D_{1}^{2}/2 + D_{1}\Big)$ and $\hat{C}^{\prime\prime} = \frac{2}{\overline{c}}\Big( H^{2} + \frac{3}{2}G^{2} + D_{2}^{2} + F + \overline{l}(G+H+\overline{c}) + \overline{c}B + 2(2(G+D_{2})+H)^{2}\log(\frac{8(2(G+D_{2})+H)^{2}}{\overline{c}^{2}})\Big)$ are absolute constants independent of $d$ or $t$.
\end{lemma}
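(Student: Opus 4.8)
The plan is to transport the three-part argument that bounds the dual process in the general case (Lemmas~\ref{lem:dual-bound}, \ref{lem:bound-dual-function} and \ref{lem:q-bound}) to the simplex, replacing the uniform Bregman bound $\sup_{x,y}D(x,y)\le R$ and the Euclidean norm estimates by the $\ell_1/\ell_\infty$ geometry and the mixing-step estimates of Lemma~\ref{lem:diff}. Starting from the $t_0$-step drift inequality \eqref{eq:interim-30} of Lemma~\ref{lem:t0-drift-0}, I would take the conditional expectation given $\mathcal{F}^{t-1}$ and use that $\{g_i^\tau\}$, $\{h_j^\tau\}$ are i.i.d.\ and independent of $\mathcal{F}^{t-1}$ while $\mf Q(t),\mf H(t)\in\mathcal{F}^{t-1}$. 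Averaging the objective, inequality- and equality-constraint sums turns the right-hand side into the partial-average objective $\overline f^{(t-1,t_0)}$ and the averaged constraints $\overline{\mf g},\overline{\mf h}$, so that minimizing over $\mu\in\Delta$ reproduces exactly the Lagrangian dual $q^{(t-1,t_0)}$ of \eqref{eq:target-dual}; this yields the simplex analogue of \eqref{eq:drift-bound-1},
\[
\expect{\|\mf Q(t+t_0)\|_2^2+\|\mf H(t+t_0)\|_2^2\mid\mathcal{F}^{t-1}}-\|\mf Q(t)\|_2^2-\|\mf H(t)\|_2^2
\le 2Vt_0\,\expect{q^{(t-1,t_0)}\big(\tfrac{\mf Q(t)}{V},\tfrac{\mf H(t)}{V}\big)\mid\mathcal{F}^{t-1}}+\hat C_{V,\alpha,t_0},
\]
with $\hat C_{V,\alpha,t_0}$ the constant produced by Lemma~\ref{lem:t0-drift-0}.

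Next I would establish the simplex version of Lemma~\ref{lem:bound-dual-function}. Invoking SELM (Assumption~\ref{as:selm}) with $k=t_0=\sqrt T$ makes the dual optimal set $\mathcal V^*_{t-1,t_0}$ nonempty, bounded by $B$, and, through Lemma~\ref{lem:leb}, equips $q^{(t-1,t_0)}$ with the weak EBC for the uniform constants $\overline c,\overline\ell$. Splitting into the cases $\text{dist}\big((\tfrac{\mf Q(t)}{V},\tfrac{\mf H(t)}{V}),\mathcal V^*_{t-1,t_0}\big)\ge\overline\ell$ and $<\overline\ell$ exactly as in the proof of Lemma~\ref{lem:bound-dual-function}, the only change is the a~priori control of the averaged constraint vectors: $\|\overline{\mf g}(\mu)\|_2\le G$ still holds by Assumption~\ref{as:basic}(2), while the Euclidean estimate $\|\overline{\mf h}(\mu)\|_2\le\sqrt{2RH^2/\beta}$ is replaced by $\|\overline{\mf h}(\mu)\|_2\le H$, obtained from $|\overline h_j(\mu)|=|\dotp{\expect{h_j^t}}{\mu}|\le\|\expect{h_j^t}\|_\infty\|\mu\|_1\le\expect{\|h_j^t\|_\infty}$ (Holder with $\|\mu\|_1=1$ and Jensen) together with $\sum_j\|h_j^t\|_\infty^2\le H^2$. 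This gives
\[
\expect{q^{(t-1,t_0)}\big(\tfrac{\mf Q(t)}{V},\tfrac{\mf H(t)}{V}\big)\mid\mathcal{F}^{t-1}}
\le F+\overline\ell\,(G+H+\overline c)+\overline c B-\overline c\,\Big\|\big(\tfrac{\mf Q(t)}{V},\tfrac{\mf H(t)}{V}\big)\Big\|_2 .
\]

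Combining the two displays yields the simplex analogue of \eqref{eq:drift-bound-2}, after which I would run the stochastic drift lemma (Lemma~\ref{lm:drift-random-process-bound}) on $Z(t)=\|(\mf Q(t),\mf H(t))\|_2$. The bounded-difference hypothesis follows from the updates \eqref{eq:Q-update}--\eqref{eq:H-update} with $\|\mu^t-\mu^{t-1}\|_1\le1$ and the $\ell_\infty$ subgradient bounds, giving an absolute $\delta_{\max}$ of the form $2(G+D_2)+H$ (a loose bound suffices); the drift hypothesis follows by completing the square, so that whenever $Z(t)$ exceeds the threshold $\Theta=\big(\hat C_{V,\alpha,t_0}+2(F+\overline\ell(G+H+\overline c)+\overline c B)Vt_0\big)/(\overline c t_0)$ one gets, after Jensen, $\expect{Z(t+t_0)\mid\mathcal{F}^{t-1}}\le Z(t)-\overline c t_0$, i.e.\ $\zeta=\overline c$. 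Lemma~\ref{lm:drift-random-process-bound} then delivers $\expect{Z(t)}\le\Theta+t_0\frac{4\delta_{\max}^2}{\overline c}\log\big(\frac{8\delta_{\max}^2}{\overline c^2}\big)$. Finally I would set $V=\sqrt T,\alpha=T,\theta=1/T$ and $t_0=\sqrt T$ (so that SELM's multiplier bound $B$, stated for $k=\sqrt T$, applies) and expand $\hat C_{V,\alpha,\sqrt T}$ from Lemma~\ref{lem:t0-drift-0}: the piece $2\alpha\log(d/\theta)=2T\log(Td)$ divided by $\overline c t_0=\overline c\sqrt T$ produces the $\tfrac{2}{\overline c}\sqrt T\log(Td)$ summand, the piece $2\alpha\theta(\log d)t_0=2t_0\log d$ divided by $\overline c t_0$ produces the $\tfrac{2\log d}{\overline c}$ summand, and the remaining terms collect into $\hat C'+\hat C''\sqrt T$, matching the claimed constants.

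The genuinely new difficulty relative to the general case is that the KL divergence is \emph{not} uniformly bounded on $\Delta$, so $\sup_{x,y}D(x,y)\le R$ is unavailable; this is exactly what the mixing step $\tilde\mu^{t-1}=(1-\theta)\mu^{t-1}+\tfrac{\theta}{d}\mf 1$ and Lemma~\ref{lem:diff} repair, the bounds $D(\mu,\tilde\mu)\le\log(d/\theta)$ and $D(\mu,\tilde\mu)-D(\mu,\mu)\le\theta\log d$ being already folded into $\hat C_{V,\alpha,t_0}$ and serving as the sole source of the $\log(Td)$ and $\log d$ terms. The two points requiring care are therefore the re-derivation of the $\|\overline{\mf h}(\mu)\|_2\le H$ bound from the $\ell_\infty$ dual norm (rather than from $R$), and the choice $t_0=\sqrt T$ so that the uniform EBC/SELM constants $\overline c,\overline\ell,B$ apply uniformly in $t$; once $Z(t)$ is bounded, it feeds directly into Lemma~\ref{lem:simplex-constraint} to complete Theorem~\ref{main:theorem-simplex}.
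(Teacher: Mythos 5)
Your proposal is correct and follows essentially the same route as the paper, whose own proof of this lemma simply invokes ``the same arguments as those in Lemmas \ref{lem:dual-bound}, \ref{lem:bound-dual-function} and \ref{lem:q-bound}'' with exactly the simplex-specific substitutions you spell out (the drift constant $\hat{C}_{V,\alpha,t_0}$ from Lemma \ref{lem:t0-drift-0}, the bound $\|\overline{\mf h}(\mu)\|_2\leq H$ in place of $\sqrt{2RH^2/\beta}$, and $\delta_{\max}=2(G+D_2)+H$ in the drift lemma). One genuinely useful point: your choice $t_0=\sqrt{T}$ silently corrects what must be a typo in the lemma statement and in the paper's proof text, both of which write $t_0=T$; with $t_0=T$ the term $2\big(H^2+\tfrac{3}{2}G^2+D_2^2\big)t_0^2/(\overline{c}\,t_0)$ would grow linearly in $T$ rather than as $\sqrt{T}$, and Assumption \ref{as:selm} only supplies the multiplier bound $B$ for $k=\sqrt{T}$, so $t_0=\sqrt{T}$ is indeed the value that reproduces the claimed constants $\hat{C}^{\prime}$, $\hat{C}^{\prime\prime}$ and the $\frac{2}{\overline{c}}\sqrt{T}\log(Td)$ and $\frac{2\log d}{\overline{c}}$ terms.
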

\begin{proof}[Proof of Lemma \ref{lem:q-bound-0}]
Following the same arguments as those in Lemma \ref{lem:dual-bound}, \ref{lem:bound-dual-function} and  $\ref{lem:q-bound}$, we can show 
\begin{multline*}
\expect{\Big\|\big(\mf Q(t),~\mf H(t)\big)\Big\|_2}
\leq 
\frac{\hat{C}_{V,\alpha,t_0}+
2\Big(F + \overline{\ell}(G+ H +\overline{c}) + \overline{c}B \Big)Vt_0}{\overline c t_0}\\
+ \frac{4t_0\Big(2 (G+D_2) 
+H\Big)^2}{\overline c}\log\l(\frac{8\l(2(G+D_2) + H\r)^2}{\overline c^2}\r).
\end{multline*}

Taking $V=\sqrt{T}, \alpha = T, t_{0} = T, \theta = 1/T$ and recalling the definition of $\hat{C}_{V, \alpha, t_{0}}$ yields
\begin{equation*}
\expect{\Big\|\big(\mf Q(t),~\mf H(t)\big)\Big\|_2}
\leq  \hat{C}^{\prime} + \hat{C}^{\prime\prime} \sqrt{T} + \frac{2\log(d)}{\overline{c}} + \frac{2}{\overline{c}} \sqrt{T}\log{Td},
\end{equation*}
where $\hat{C}^{\prime} = \frac{2}{\overline{c}}\Big( H^{2}+G^{2} + D_{2}^{2} +D_{1}^{2}/2 + D_{1}\Big)$ and $\hat{C}^{\prime\prime} = \frac{2}{\overline{c}}\Big( H^{2} + \frac{3}{2}G^{2} + D_{2}^{2} + F + \overline{l}(G+H+\overline{c}) + \overline{c}B + 2(2(G+D_{2})+H)^{2}\log(\frac{8(2(G+D_{2})+H)^{2}}{\overline{c}^{2}})\Big)$.
\end{proof}

The constraint violations in Theorem \ref{main:theorem-simplex} then follows by combining Lemma \ref{lem:simplex-constraint} and Lemma \ref{lem:q-bound-0}.

\subsection{Proof of other supporting lemmas}\label{sec:supp-lemma-proof}
\begin{proof}[Proof of Lemma \ref{cor:mfcq}]
We expand the simplex constraints in \eqref{eq:partial-static-prob} explicitly and the full dual function writes
\[
q_0^{(t,k)}(\lambda,~\eta,~\mathbf{u},~v): = 
\min_{\mu\in\mathbb{R}^d} 
\overline{f}^{t,k}(\mu) + \sum_{i=1}^L\lambda_i\nabla\overline{g}_i(\mu) 
+ \sum_{j=1}^M\eta_j\dotp{\expect{h_j^t}}{\mu} - \sum_{i=1}^du_i\mu_i + v(\sum_{i=1}^d\mu_i-1). 
\]
Let $q_0^* = \max_{\lambda\geq0,~\eta\in\mathbb{R}^M,~\mathbf{u}\geq0,v\in\mathbb{R}}~ q_0^{(t,k)}(\lambda,~\eta,~\mathbf{u},~v)$. 
By the assumption of lemma \ref{cor:mfcq} and Theorem \ref{theorem:mfcq} we have the solution set $K(\mu^*)$ of vectors $(\lambda,~\eta,~\mathbf{u},~v)$ of the following equations (KKT conditions) is non-empty and bounded:
\begin{align}
&\nabla\overline{f}^{t,k}(\mu^*) + \sum_{i=1}^L\lambda_i\nabla\overline{g}_i(\mu^*) 
+ \sum_{j=1}^M\eta_j\expect{h_j^t} - \sum_{i=1}^du_i\mathbf{e}_i + v\mathbf{1} = 0, \label{eq:stationary}\\
&\lambda\geq0,~~\mathbf{u}\geq0,  \nonumber\\
&\lambda_ig_i(\mu^*) = 0,~\forall i\in\{1,2,\cdots,M\},
~~u_i\mu_i^* = 0,~\forall i\in\{1,2,\cdots,d\}. \nonumber
\end{align}
It is easy to verify that $K(\mu^*) = \text{argmax}_{\lambda\geq0,~\eta\in\mathbb{R}^M,~\mathbf{u}\geq0,v\in\mathbb{R}} ~q_0^{(t,k)}(\lambda,~\eta,~\mathbf{u},~v)$ and we have zero duality gap, i.e. $q_0^* = \overline{f}^{(t,k)}(\mu^*)$. Our goal is to show that the set $\mathcal{V}^*$, defined in the statement of the lemma, is equal to the set 
$\{(\lambda^*,\eta^*)~|~(\lambda^*,\eta^*,\mf u^*,v^*)\in K(\mu^*),~\exists~\mf u^*,v^*\}$.

First of all, for any $(\lambda^*,\eta^*,\mf u^*,v^*)\in K(\mu^*)$, we have $q^{(t,k)}(\lambda^*,\eta^*)\geq q_0^{(t,k)}(\lambda^*,~\eta^*,~\mathbf{u}^*,~v^*)=q_0^*$. Since we have zero duality gap $q_0^* = \overline{f}^{(t,k)}(\mu^*)$ and one always has $q^{(t,k)}(\lambda,\eta)\leq \overline{f}^{(t,k)}(\mu^*),~\forall \lambda\in\mathbb{R}^L_+,~\eta\in\mathbb{R}^M$, it follows 
$q^{(t,k)}(\lambda^*,\eta^*) = \overline{f}^{(t,k)}(\mu^*)$. Thus, not only do we have a zero duality gap of $q^{(t,k)}(\lambda^*,\eta^*)$, we also have $\lambda^*,\eta^*$ being the solution point to the dual maximization problem $\text{max}_{\lambda\in\mathbb{R}^L_+,~\eta\in\mathbb{R}^M}q^{(t,k)}(\lambda,\eta)$, showing that $\mathcal{V}^*$ is non-empty and 
$\{(\lambda^*,\eta^*)~|~(\lambda^*,\eta^*,\mf u^*,v^*)\in K(\mu^*),~\exists~\mf u^*,v^*\}\subseteq\mathcal{V}^*$.

For the other direction, we pick any $(\lambda^*,\eta^*)\in\mathcal{V}^*$ and consider the following optimization problem:
\begin{equation}\label{eq:inter-opt-1}
q^{(t,k)}(\lambda^*,\eta^*) = \min_{\mu\in\Delta}~~
\overline{f}^{t,k}(\mu) + \sum_{i=1}^L\lambda_i^*\overline{g}_i(\mu) 
+ \sum_{j=1}^M\eta_j^*\overline{h}_j(\mu). 
\end{equation}
By zero duality gap, the solution to this optimization problem is equal to $\overline{f}^{(t,k)}(\mu^*)$. Thus $\mu^*$ must be one of the solution points of \eqref{eq:inter-opt-1} such that the complementary slackness $\lambda_i^*\overline{g}_i(\mu^*) = 0,~\forall i\in\{1,2,\cdots,L\}$ is satisfied.\footnote{Suppose on the contrary $\lambda_i^*\overline{g}_i(\mu^*) < 0$ for some index $i$, then, this means 
taking $\mu^*$ gives smaller value of the objective than $\overline{f}^{(t,k)}(\mu^*)$, contradicting the fact that the minimum is $\overline{f}^{(t,k)}(\mu^*)$.} Furthermore, it is obvious that MFCQ is also satisfied for \eqref{eq:inter-opt-1} (we only need to check the simplex constraints satisfy MFCQ, which is obvious). Thus, by Theorem \ref{theorem:mfcq}, we have there exists $\mf u^*\geq0, v^*\in\mathbb{R}$ such that the stationary condition \eqref{eq:stationary} is satisfied, and $u_i\mu_i^* = 0,~\forall i\in\{1,2,\cdots,d\}$. Combining with the previous complementary slackness $\lambda_i^*\overline{g}_i(\mu^*) = 0$, we arrive at the conclusion that $(\lambda^*,\eta^*,\mf u^*,v^*)\in K(\mu^*)$. This implies 
$\mathcal{V}^*\subseteq\{(\lambda^*,\eta^*)~|~(\lambda^*,\eta^*,\mf u^*,v^*)\in K(\mu^*),~\exists~\mf u^*,v^*\}$.
Overall, we have the set $\mathcal{V}^*$ is also bounded and we finish the proof.
\end{proof}

\begin{proof}[Proof of Lemma \ref{lem:ebc-estimate}]
First of all, note that by the EBC, for any $(\lambda,\eta)\in\mc S_\delta$, one has $\text{dist}((\lambda,\eta),\mathcal{V}^*)\leq C_\delta\delta^\beta$, thus, for those $(\lambda,\eta)$ such that $\text{dist}((\lambda,\eta),\mathcal{V}^*)\geq C_\delta\delta^\beta$,
$(\lambda,\eta)\not\in\mc S_\delta$. 
We then recall the following result:
\begin{lemma}[\cite{yang2015rsg}]\label{lem:yang-lin}
Consider any convex function $F:\mathcal{X}\rightarrow\mb R$ such that the minimal set $\Lambda^*$ is non-empty. Then, for any  $\mf x\in\mathcal{X}$ and any $\varepsilon>0$,
\[ 
\|\mf x- \mf x_\varepsilon^\dagger\|\leq \frac{\text{dist}(\mf x_\varepsilon^\dagger,\Lambda^*)}{\varepsilon}
\l( F(\mf x) - F(\mf x_\varepsilon^\dagger) \r),
\]
where $\mf x_\varepsilon^\dagger:= \text{argmin}_{\mf x_\varepsilon\in\mc S_\varepsilon}\|\mf x - \mf x_\varepsilon\|$, and 
$\mc S_\varepsilon$ is the $\varepsilon$-sublevel set defined in Lemma \ref{lem:ebc-estimate}.
\end{lemma}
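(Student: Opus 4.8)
The plan is to give a self-contained, elementary proof that uses only the convexity of $F$, the defining property of the projection onto $\mathcal{S}_\varepsilon$, and the triangle inequality; no subgradients or normal cones are needed, which keeps the argument valid for the general norm $\|\cdot\|$ appearing in the statement. Write $F^* := F(\mathbf{x}^*)$ for $\mathbf{x}^*\in\Lambda^*$, set $\mathcal{S}_\varepsilon := \{\mathbf{x}\in\mathcal{X}: F(\mathbf{x}) - F^* \le \varepsilon\}$, and abbreviate $\mathbf{x}^\dagger := \mathbf{x}_\varepsilon^\dagger$. First I would dispose of the trivial case $\mathbf{x}\in\mathcal{S}_\varepsilon$: then the projection returns $\mathbf{x}^\dagger = \mathbf{x}$, so the left-hand side is $0$ while $F(\mathbf{x}) - F(\mathbf{x}^\dagger) = 0$ makes the right-hand side $0$, and the inequality holds with equality.

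The substantive case is $\mathbf{x}\notin\mathcal{S}_\varepsilon$, i.e. $F(\mathbf{x}) - F^* > \varepsilon$. The one genuinely technical step, which I expect to be the main obstacle, is to show that the projection lands exactly on the boundary level set, $F(\mathbf{x}^\dagger) = F^* + \varepsilon$. I would argue this by contradiction: if $F(\mathbf{x}^\dagger) < F^* + \varepsilon$, then by continuity of $F$ the point $\mathbf{x}^\dagger + s(\mathbf{x}-\mathbf{x}^\dagger)$, which lies in $\mathcal{X}$ by convexity since both endpoints do, still satisfies $F < F^* + \varepsilon$ for small $s>0$ and hence lies in $\mathcal{S}_\varepsilon$ while being strictly closer to $\mathbf{x}$ than $\mathbf{x}^\dagger$, contradicting the definition of the projection. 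This identity $F(\mathbf{x}^\dagger) - F^* = \varepsilon$ is precisely what converts a generic denominator into the $\varepsilon$ of the statement, so it cannot be skipped; it is also where the continuity hypothesis enters.

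With the boundary identity in hand, the remainder is a short convex-combination computation. Let $\mathbf{x}^*$ attain $\text{dist}(\mathbf{x}^\dagger,\Lambda^*) =: D = \|\mathbf{x}^\dagger - \mathbf{x}^*\|$, which exists because $\Lambda^*$ is nonempty and closed as a sublevel set of the continuous convex $F$. For $\theta\in[0,1]$ set $\mathbf{w}_\theta := (1-\theta)\mathbf{x} + \theta\mathbf{x}^* \in \mathcal{X}$. Convexity gives $F(\mathbf{w}_\theta) \le (1-\theta)F(\mathbf{x}) + \theta F^*$, so the choice $\theta^* := \frac{F(\mathbf{x}) - F(\mathbf{x}^\dagger)}{F(\mathbf{x}) - F^*}\in(0,1)$, for which $1-\theta^* = \frac{\varepsilon}{F(\mathbf{x}) - F^*}$, forces $F(\mathbf{w}_{\theta^*}) \le F^* + \varepsilon$, i.e. $\mathbf{w}_{\theta^*}\in\mathcal{S}_\varepsilon$. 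Since $\mathbf{x}^\dagger$ is the closest point of $\mathcal{S}_\varepsilon$ to $\mathbf{x}$ and $\mathbf{x} - \mathbf{w}_{\theta^*} = \theta^*(\mathbf{x} - \mathbf{x}^*)$, the defining property of the projection yields $\|\mathbf{x}-\mathbf{x}^\dagger\| \le \|\mathbf{x}-\mathbf{w}_{\theta^*}\| = \theta^*\|\mathbf{x}-\mathbf{x}^*\|$.

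Finally I would close the estimate with the triangle inequality $\|\mathbf{x}-\mathbf{x}^*\| \le \|\mathbf{x}-\mathbf{x}^\dagger\| + D$. Writing $r := \|\mathbf{x}-\mathbf{x}^\dagger\|$, the last two bounds give $r \le \theta^*(r + D)$, whence $r(1-\theta^*) \le \theta^* D$ and $r \le \frac{\theta^*}{1-\theta^*}D$. Substituting $\theta^* = \frac{F(\mathbf{x}) - F(\mathbf{x}^\dagger)}{F(\mathbf{x}) - F^*}$ and $1-\theta^* = \frac{\varepsilon}{F(\mathbf{x}) - F^*}$ simplifies $\frac{\theta^*}{1-\theta^*}$ to $\frac{F(\mathbf{x}) - F(\mathbf{x}^\dagger)}{\varepsilon}$, giving exactly $\|\mathbf{x}-\mathbf{x}^\dagger\| \le \frac{\text{dist}(\mathbf{x}^\dagger,\Lambda^*)}{\varepsilon}\,(F(\mathbf{x}) - F(\mathbf{x}^\dagger))$. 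Every step uses only homogeneity of the norm, the triangle inequality, convexity, and the projection inequality, all valid for an arbitrary norm, so the argument applies verbatim in the paper's setting.
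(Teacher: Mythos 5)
Your proof is correct, and it is worth noting that the paper itself contains no proof of this lemma: it is imported verbatim from the cited reference \citep{yang2015rsg}, so the relevant comparison is with the argument given there. That argument proceeds differently: it writes the first-order (KKT) optimality condition for the Euclidean projection of $\mathbf{x}$ onto $\mathcal{S}_\varepsilon$, which produces a scalar $\zeta\geq 0$ and a subgradient $v\in\partial F(\mathbf{x}_\varepsilon^\dagger)$ with $\mathbf{x}-\mathbf{x}_\varepsilon^\dagger=\zeta v$, and then lower-bounds $\|v\|$ by $\varepsilon/\text{dist}(\mathbf{x}_\varepsilon^\dagger,\Lambda^*)$ using the subgradient inequality between $\mathbf{x}_\varepsilon^\dagger$ and a nearest minimizer. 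Your route—project, identify the boundary value $F(\mathbf{x}_\varepsilon^\dagger)=F^*+\varepsilon$, slide $\mathbf{w}_{\theta^*}=(1-\theta^*)\mathbf{x}+\theta^*\mathbf{x}^*$ into $\mathcal{S}_\varepsilon$ by convexity, and close with the projection inequality plus the triangle inequality—is genuinely different: it avoids subgradients and normal cones entirely, and as you observe it is valid for an arbitrary norm, whereas the KKT-based argument is tailored to the Euclidean projection (the paper only needs $\|\cdot\|_2$, but the lemma is stated with a generic norm, so your version actually matches the statement better). Your algebra is right: $1-\theta^*=\varepsilon/(F(\mathbf{x})-F^*)$, so $r\leq\theta^*(r+D)$ gives $r\leq\tfrac{\theta^*}{1-\theta^*}D=\tfrac{F(\mathbf{x})-F(\mathbf{x}_\varepsilon^\dagger)}{\varepsilon}D$, and this chain also rules out the degenerate case $D=0$ when $\mathbf{x}\notin\mathcal{S}_\varepsilon$ (indeed $F(\mathbf{x}_\varepsilon^\dagger)=F^*+\varepsilon>F^*$ forces $\mathbf{x}_\varepsilon^\dagger\notin\Lambda^*$).

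Two minor refinements. First, for the boundary identity you do not actually need continuity: convexity along the segment already gives $F(\mathbf{x}_\varepsilon^\dagger+s(\mathbf{x}-\mathbf{x}_\varepsilon^\dagger))\leq(1-s)F(\mathbf{x}_\varepsilon^\dagger)+sF(\mathbf{x})<F^*+\varepsilon$ for all sufficiently small $s>0$ whenever $F(\mathbf{x}_\varepsilon^\dagger)<F^*+\varepsilon$, which contradicts minimality of the projection; continuity (or lower semicontinuity) is only needed to make $\mathcal{S}_\varepsilon$ and $\Lambda^*$ closed so that $\mathbf{x}_\varepsilon^\dagger$ and the nearest minimizer $\mathbf{x}^*$ exist—an assumption the paper makes explicitly in the setting of Lemma \ref{lem:ebc-estimate}. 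Second, if one wanted to avoid even the attainment of $\text{dist}(\mathbf{x}_\varepsilon^\dagger,\Lambda^*)$, the same computation with an approximate nearest point $\mathbf{x}^*_\delta$ and $\delta\to 0$ closes the argument; as written, invoking closedness of $\Lambda^*$ is fine.
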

Applying this lemma to our scenario, we define $(\lambda^\dagger_\delta,\eta^\dagger_\delta) = \argmin_{(\lambda_\delta,\eta_\delta)\in\mathcal{S}_\delta}
\|(\lambda_\delta,\eta_\delta) - (\lambda,\eta)\|_2$ and take function to be $q^{(t,k)}(\lambda,\eta)$ and consider the $\delta$-superlevel set $S_\delta$. By lemma \eqref{lem:yang-lin}, we readily have
\begin{align*}
\|(\lambda,\eta) - (\lambda_\delta^\dagger,\eta_\delta^\dagger)\|_2\leq &
\frac{\text{dist}((\lambda_\delta^\dagger,\eta_\delta^\dagger), \mathcal{V}^*)}{\delta}\l(
q^{(t,k)}(\lambda_\delta^\dagger,\eta_\delta^\dagger) - q^{(t,k)}(\lambda,\eta)\r)\\
\leq&
\frac{C_{\delta}\delta^{\beta}}{\delta}\l(q^{(t,k)}(\lambda_\delta^\dagger,\eta_\delta^\dagger) - q^{(t,k)}(\lambda,\eta) \r)\\
=&C_{\delta}\delta^{\beta-1}\l( q^{(t,k)}(\lambda_\delta^\dagger,\eta_\delta^\dagger) - q^{(t,k)}(\lambda,\eta)\r).
\end{align*}
On the other hand, 
\[
\text{dist}((\lambda_\delta^\dagger,\eta_\delta^\dagger),\mathcal{V}^* )
\leq C_{\delta}\l(q^{(t,k)}(\lambda^*,\eta^*) - q^{(t,k)}(\lambda_\delta^\dagger,\eta_\delta^\dagger)\r)^{\beta}
\]
Now, we claim that $q^{(t,k)}(\lambda^*,\eta^*) - q^{(t,k)}(\lambda_\delta^\dagger,\eta_\delta^\dagger) = \delta$. Indeed, suppose on the contrary, $q^{(t,k)}(\lambda^*,\eta^*) - q^{(t,k)}(\lambda_\delta^\dagger,\eta_\delta^\dagger) < \delta$, then, by the continuity of the function $q^{(t,k)}$, 
there exists $\alpha\in(0,1)$ and $(\lambda',\eta')=\alpha(\lambda^\dagger_\delta,\eta^\dagger_\delta)+(1-\alpha)(\lambda,\eta)$ such that $q^{(t,k)}(\lambda^*,\eta^*) - q^{(t,k)}(\lambda',\eta') = \delta$, i.e. $(\lambda',\eta')\in\mc S_{\delta}$, and $\|(\lambda,\eta)-(\lambda',\eta')\|_2 
= \alpha\|(\lambda,\eta)-(\lambda^\dagger_\delta,\eta^\dagger_\delta)\|_2 < \|(\lambda,\eta)-(\lambda^\dagger_\delta,\eta^\dagger_\delta)\|_2 $, contradicting the definition that 
$(\lambda^\dagger_\delta,\eta^\dagger_\delta) = \argmin_{(\lambda_\delta,\eta_\delta)\in\mathcal{S}_\delta}
\|(\lambda_\delta,\eta_\delta) - (\lambda,\eta)\|_2$.

Thus, we have
\[
\text{dist}((\lambda_\delta^\dagger,\eta_\delta^\dagger),\mathcal{V}^* )\leq C_{\delta}\delta^{\beta-1}\l(q^{(t,k)}(\lambda^*,\eta^*) - q^{(t,k)}(\lambda_\delta^\dagger,\eta_\delta^\dagger)\r).
\]
Overall, we have 
\[
\text{dist}((\lambda,\eta),\mathcal{V}^*) \leq \text{dist}((\lambda_\delta^\dagger,\eta_\delta^\dagger),\mathcal{V}^* ) + 
\|(\lambda,\eta) - (\lambda_\delta^\dagger,\eta_\delta^\dagger)\|_2
\leq C_{\delta}\delta^{\beta-1}\l( q^{(t,k)}(\lambda^*,\eta^*) - q^{(t,k)}(\lambda,\eta)\r),
\]
and we finish the proof.
\end{proof}


\end{document}